\RequirePackage[l2tabu, orthodox]{nag}
\documentclass[a4paper,10pt]{amsart}

\usepackage[utf8]{inputenc}
\usepackage[T1]{fontenc}
\usepackage{tikz-cd} 
\setcounter{section}{0}
\setcounter{tocdepth}{2}
\usepackage[ruled,vlined]{algorithm2e}

\usepackage{amsmath,amsfonts,amssymb,amsxtra,setspace,xspace,graphicx,lmodern,psfrag,color,latexsym,bbm,comment,amsthm,enumerate,enumitem,pifont,mathrsfs,caption,subcaption,microtype,dsfont, bm,nicefrac}
\usepackage[foot]{amsaddr}
\usepackage{tkz-euclide}

\usepackage[a4paper]{geometry}

\usepackage[hidelinks,colorlinks=true]{hyperref}\hypersetup{urlcolor=blue, citecolor=blue}

\usepackage{mathtools}

%\mathtoolsset{showonlyrefs}

\usepackage[capitalise]{cleveref}
\usepackage{autonum}
\numberwithin{equation}{section}
% avoid some underfull vboxes
\makeatletter
 \def\@textbottom{\vskip \z@ \@plus 17pt}
 \let\@texttop\relax
\makeatother
\usepackage[acronym,nonumberlist,nogroupskip]{glossaries}
% \makenoidxglossaries

\usepackage{array,booktabs}

\newtheorem{thm}{Theorem}[section]
\newtheorem{cor}[thm]{Corollary}
\newtheorem{lem}[thm]{Lemma}
\newtheorem{prop}[thm]{Proposition}

%[section]
\theoremstyle{plain}
\theoremstyle{definition}
\newtheorem{assumption}[thm]{Assumption}

\newtheorem{defn}[thm]{Definition}
\theoremstyle{remark}
\newtheorem{rem}[thm]{Remark} 

\Crefname{lem}{Lemma}{Lemmas}

\def\Xint#1{\mathchoice
{\XXint\displaystyle\textstyle{#1}}%
{\XXint\textstyle\scriptstyle{#1}}%
{\XXint\scriptstyle\scriptscriptstyle{#1}}%
{\XXint\scriptscriptstyle\scriptscriptstyle{#1}}%
\!\int}
\def\XXint#1#2#3{{\setbox0=\hbox{$#1{#2#3}{\int}$ }
\vcenter{\hbox{$#2#3$ }}\kern-.6\wd0}}

\def\dashint{\Xint-}

\renewcommand{\glossarysection}[2][]{} %% removes glossaries from toc

%\theoremstyle{remark}

%%%%%%%%%%%%%%%%%%%%%%%%%%%%%%%%%%%%
%%% Special math typo from André %%%
%%%%%%%%%%%%%%%%%%%%%%%%%%%%%%%%%%%%

%%% Bracket-constructions %%%
\DeclarePairedDelimiter{\abs}{\lvert}{\rvert}

\DeclarePairedDelimiter{\bra}{(}{)}
\DeclarePairedDelimiter{\pra}{[}{]}
\DeclarePairedDelimiter{\set}{\{}{\}}

%%% upright differentials with right spacing 
\DeclareMathAlphabet{\mathup}{OT1}{\familydefault}{m}{n}
\newcommand{\dx}[1]{\mathop{}\!\mathup{d} #1}

%%% Calligraphic letters %%%

\newcommand{\cM}{\ensuremath{\mathcal M}}

\newcommand{\N}{{\mathbb N}}
\newcommand{\R}{{\mathbb R}}

\newcommand{\Z}{{\mathbb Z}}

%%% shortcuts %%%

\newcommand{\eps}{{\varepsilon}}
\newcommand{\grm}{g}
\newcommand{\al}{\alpha}

\newcommand{\at}{\bra*{A^T}}

\newcommand{\intI}[1]{\int_{0}^1 #1 \dx{x}}

%%% operators %%%

\setstretch{1.2}

\renewcommand{\tilde}{\widetilde}
\renewcommand{\bar}{\overline}
\renewcommand{\eps}{\varepsilon}
\allowdisplaybreaks[4] % page breaks in long equations

\usetikzlibrary{arrows, arrows.meta, decorations.pathmorphing}

\author{Benjamin Gess$^{\dagger,\ddagger}$, Rishabh S. Gvalani$^\ddagger$, Florian Kunick$^\ddagger$, and Felix Otto$^\ddagger$}
\address[$\dagger$]{Universit\"at Bielefeld}
\email{bgess@math.uni-bielefeld.de}
\address[$\ddagger$]{Max-Planck-Institut f\"ur Mathematik in den Naturwissenschaften}
\email{gvalani@mis.mpg.de, kunick@mis.mpg.de,  otto@mis.mpg.de}

\date{\today}

\tikzset{snake it/.style={decorate, decoration=snake}}
\title[The thin-film equation with thermal noise]{Thermodynamically consistent and positivity-preserving discretization 
of the thin-film equation with thermal noise}
\keywords{Thin-film equation, numerics for singular SPDEs,  fluctuation-dissipation principle, gradient flows}
\subjclass[2020]{60H17, 76D45, 65C99}
\begin{document}

\maketitle

\begin{abstract}

In micro-fluidics not only does capillarity dominate but also
thermal fluctuations become important. On the level of the lubrication
approximation, this leads to a quasi-linear fourth-order parabolic equation
for the film height $h$ driven by space-time white noise.
The (formal) gradient flow structure of its deterministic counterpart, the so-called thin-film equation, 
which encodes the balance between driving capillary and limiting viscous forces,
provides the guidance for the thermodynamically consistent introduction
of fluctuations. We follow this route on the level of a spatial discretization 
of the gradient flow structure, i.e., on the level of a discretization
of energy functional and dissipative metric tensor. 

\smallskip

Starting from an energetically conformal finite-element 
(FE) discretization, we point out that the numerical mobility function introduced by Gr\"un and Rumpf can be interpreted as a discretization of the metric tensor in
the sense of a mixed FE method with lumping. While this discretization was devised
in order to preserve the so-called entropy estimate, we use this to show that
the resulting high-dimensional stochastic differential equation (SDE)
preserves pathwise and pointwise strict positivity, 
at least in case of the physically relevant mobility function
arising from the no-slip boundary condition.

\smallskip

As a consequence, and opposed to more naive discretizations
of the thin-film equation with thermal noise, 
the above discretization is not in need
of an artificial condition at the boundary of the configuration space orthant $\{h>0\}$
(which admittedly could also be avoided by modelling a disjoining pressure). As a consequence,
this discretization gives rise to a consistent invariant measure, namely
a discretization of the Brown\-ian excursion (up to the volume constraint),
and thus features an entropic repulsion. The price to pay over more naive discretizations
is that when writing the SDE in It\^o's form, which is the basis for the
Euler-Mayurama time discretization, a correction term appears.

\smallskip

We perform various numerical experiments to compare the behavior and performance of our discretization to that of the more naive finite difference discretization of the equation. Among other things, we study numerically the invariance and entropic repulsion of the invariant measure and provide evidence for the fact that the finite difference discretization touches down almost surely while our discretization stays away from the $\partial\set*{h > 0}$.
\end{abstract}

\setcounter{tocdepth}{1}
\tableofcontents
\section{Introduction}

The thin-film equation models the evolution of the height $h$ of a liquid film 
over a solid flat substrate, as driven by capillarity\footnote{surface tension} and
limited by viscosity. In the considered
regime of small slope ($|\partial_xh|\ll 1$) and due to the no-slip boundary 
condition at the liquid-solid interface, viscous dissipation is so strong
that the liquid's inertia can typically be neglected.
Hence the dynamics are determined by a quasi-static balance between capillary
and viscous forces. The lubrication approximation, which is based on a modulated
Poiseuille Ansatz for the fluid velocity, leads to a fourth-order
parabolic equation with a mobility that cubically degenerates in the film height.

\medskip

In this paper, we are interested in the thin-film equation driven by the noise
that models thermal fluctuations. That noise takes the form of a conservative
white noise with a multiplicative non-linearity. 
The specific form of the multiplicative non-linearity -- it is given by the square root 
of the mobility --
formally arises from the fluctuation-dissipation principle, see \cite[(4)]{Davidovitch}.
The fluctuation-dissipation principle amounts to a linearized version of the property
of detailed balance, which in turn amounts to reversibility of the invariant measure
on path space.
While there exist elements of a well-posedness theory for (spatially) more regular 
forms of the noise in the mathematical literature, see \cite{FG18}, \cite{GG20} and \cite{DGGG20} and the next section for a detailed discussion, the stochastic partial differential equation (SPDE) we are interested in is
expected to require a renormalization, and is theoretically uncharted. 
However, at least in $1+1$-space dimensions\footnote{which means that the profile is
constant in one direction, so that the space variable $x$ is one-dimensional} 
as considered in this paper,
the invariant measure (on configuration space) of the SPDE does not require a renormalization. 
In this paper we ignore the issue of renormalization and focus on spatial\footnote{by which we mean the physical space variable $x$,
and not the state-space variable $h$} discretizations of this SPDE.

\medskip

The main issue is that the configuration space $\{h>0\}$, which after discretization has the
structure of an orthant, obviously has a boundary. The related preservation of positivity\footnote{often
in form of preservation of non-negativity if the interest was in film spreading 
and (partial) wetting} has been at the core of the analysis of the deterministic thin-film equation, both on the continuum level 
\cite{BF90, Beretta, DPGG98} and others, and on the level of spatial
discretization \cite{GruenRumpf, Bertozzi}. We refer to the end of the section for a more in-depth overview.
The preservation of strict positivity is intimately related to what is called the entropy estimate,
i.~e.~the existence of a Lyapunov functional on configuration space that blows up when $h$ 
approaches zero.
This Lyapunov functional depends on the mobility, 
and thereby arises from kinetics and dissipation,
and thus is actually unrelated to the notion of entropy in thermodynamic equilibrium theory. 
In fact, the blowing up of the entropy as 
$h \downarrow 0$ is a consequence of a sufficiently strong degeneracy of the mobility.
Of course, both in the discrete and the continuum case, such a touch-down can be
suppressed by introducing a disjoining pressure. However, this feature comes with an additional
(vertical) length scale of molecular size, and which one 
thus would like to avoid resolving. In this paper, we therefore disregard this 
energetic mechanism preventing touch-down, and just focus on the above-mentioned kinetic mechanism.

\medskip

In case of the thin-film equation with thermal noise, which in its discretized version
describes a drift-diffusion process on the high-dimensional orthant $\{h>0\}$, the
question is even more pressing: Does the process reach the boundary or is the degeneracy
of the mobility as $h\downarrow0$, which translates into a degeneracy of the diffusion 
near the boundary of $\{h>0\}$, strong enough to prevent reaching the boundary?
The fact that the boundary may be reached has been already recognized in \cite{Davidovitch}, 
where also an (uncontrolled) fix has been proposed.
For a rigorous analysis of a given discretization, we need a multi-dimensional
version of a Feller test. One main insight
of this paper is that such a Feller test can be carried out with help of the entropy
mentioned above. It shows that for the physical mobility considered in this paper, and in the case of $1+1$-dimensions,
the numerical mobility, which was introduced in \cite[Section 5]{GruenRumpf} in order to prevent touch-down
in the deterministic case, does also prevent touch-down in the presence of thermal noise (cf.~\cref{stentest}). However, in~\cref{sec:ldp} we provide evidence, through analysis of the path-space rate functional of the continuum stochastic thin-film equation, that the absence of touch-down maybe an artifact of discretization - for the continuum system touch-down is unlikely only for $m\geq 8$ (cf.~\cref{m<8}).

\medskip

The use of entropy estimates to construct non-negative solutions to the (deterministic) thin-film equation goes back to the original work \cite[p.190, (4.12)]{BF90}, proving the existence of non-negative solutions for mobility exponents $1<m<4$ (see \cref{mobilitym}) and preservation of positivity for $m\ge 4$. Subsequently, these estimates were refined by means of so-called $\alpha$-entropy estimates in \cite[p.182, Proposition 2.1]{Beretta} and \cite[p.99, (4.8) - (4.13)]{BP96}, which allowed to deduce the preservation of positivity for $m\ge \frac{7}{2}$. A generalization of the existence of non-negative solutions to multiple space dimension was given in \cite{G95} and extended to a wider range of mobility exponents in \cite[p.324, Proposition 2.2]{DPGG98}. Localized forms of $\alpha$-entropy estimates were subsequently introduced in \cite[Section 4]{B96} in $1+1$ dimensions and \cite[p.422, Theorem 3.1]{BDPGG98} in higher space dimensions and in \cite{DPGG01} used to prove upper bounds on the propagation of the support of solutions. Backward weighted entropy estimates have been introduced in \cite[Section 3]{F13} and \cite[p.3142, Lemma 11]{F14} to prove lower bounds on propagation rates. Also in the context of stochastic thin-film equations (with spatially regular noise) entropy estimates have been used in order to derive a-priori estimates and the existence of non-negative solutions \cite[p.423, Proposition 4.3]{FG18} and \cite[p.20, Lemma 4.3]{DGGG20}.

\medskip

As has been already mentioned, for the discretized thin-film equation the use of entropy estimates, which rely on an appropriate discretization of the mobility, dates back to \cite[Section 5]{GruenRumpf} in the case of a finite element discretization, and to \cite[p.529, Proposition 3.1]{Bertozzi} in the case of a finite difference discretization. In the discrete case the corresponding entropy estimates have a stronger effect yielding positivity already for $m \geq 2$ in case of the two aforementioned discretizations. In this paper we transfer the discretization and entropy estimate of \cite{GruenRumpf} to the stochastic setting and get positivity for the scheme for $m \geq 3$ (cf. \eqref{stentest}).

%%%%%%%%%%%%%%%%%%%%%%%%%%%%%%%%%%%%%%%%%%%%%%%%%%%%%%%%%%%%%%%%%%%%%%%%%%%%%%%%%%%%%%%%%

\section{State of the art}

In \cite[Section 2.3]{Gruen}, 
the authors make the ansatz of an (infinite-dimensional) SDE in It\^o form 
with a drift term given by\footnote{just, i.~e.~there is no It\^o correction term} 
the deterministic thin-film operator, see \cite[(36)]{Gruen}, and seek a noise term such that
the process satisfies detailed balance with respect to the associated Gibbs measure, see \cite[(21)]{Gruen}.
They carry this out on the level of a finite-difference discretization in space, 
based on centered finite differences, see \cite[p.1269]{Gruen} 
which allows to use a local numerical mobility function, see \cite[(29b)]{Gruen}.
Thanks to this simple structure\footnote{where there is no difference between the It\^o and
Stratonovich form} they find that this is the case, provided the multiplicative noise
involves the exact square root of the numerical mobility function, see \cite[(33)]{Gruen}.
However in this case, it is easy to see that the process does touch-down (cf. Section \ref{sec:cdd}).

\medskip

When it comes to actual simulations, \cite{Gruen} departs from this somewhat academic 
spatial discretization:
They treat the noise term, which due to its conservative and multiplicative nature
has the structure of a scalar conservation law with nonlinear and heterogeneous (in fact, rough)
drift, via a finite volume discretization with an upwind scheme, see \cite[(63),(64)]{Gruen}.
The upwind scheme preserves non-negativity. 
For the deterministic term, they however use the numerical mobility introduced in
\cite{GruenRumpf}, see \cite[(B.3)]{Gruen}, 
which is rather based on a lumped finite element interpretation, see \cite[p.1275]{Gruen}.
Again, at least on the purely deterministic level, this ensures non-negativity.
Using two different, and nonlocal, numerical mobility functions however destroys the structure
of exact detailed balance. The authors acknowledge this deficiency, see \cite[p.1278]{Gruen},
mentioning that the deviation from detailed balance is vanishing (of first order) in
the grid size. However, it is well-known that in the case of a singular SPDE, 
two different spatial discretizations, while both nominally first-order consistent, 
may lead to order-one different solutions (cf. \cite{HairerMaasAoP2012}).

\medskip

In~\cite[Sections 2 and 4]{Pavliotis}, the authors repeat the derivation of the infinite-dimensional SDE of~\cite{Gruen}, but obtain it in the limit of fully correlated noise in the wall-normal direction for the long-wave/lubrication approximation of the so-called fluctuating hydrodynamics equations (see~\cite[§88, (88.6)-(88.18)]{LLP80}). Following~\cite{Gruen}, the authors make, essentially, an identical observation, that a finite difference discretization of the associated stochastic thin-film equation is formally reversible with respect to the associated Gibbs measure if and only if the multiplicative noise is given by the square root of the associated mobility.  

\medskip

Again, for the purposes of numerical simulations,~\cite{Pavliotis} departs from the finite-difference discretization and instead proposes a spectral collocation method. The idea is to carry out the differentiation operations by decomposing the solution in terms of the eigenfunctions of the covariance operator of the noise, while treating the numerical mobility in a similar manner to the finite-difference discretization (see~\cite[Section 5.1, (71)-(72b)]{Pavliotis}). While this may have some structural advantages, it suffers from the drawback that it is unclear, and possibly untrue, that the spectral discretization satisfies detailed balance. Furthermore, it is also unclear if this scheme preserves the positivity of the film height.

\medskip

In recent years, the existence of probabilistically weak solutions to the stochastic thin-film equation has been considered in a sequence of works. In all of these works the noise term is spatially regularized. In \cite{FG18}, the authors constructed weak solutions for the case of quadratic mobility, relying on a conjoining-disjoining pressure term, and noise interpreted in It\^o sense. In \cite{Cornalba2018} more general mobilities were treated depending on a non-conservative source term. Both works require the initial condition to be strictly positive. For quadratic mobility and noise in Stratonovich sense, this restriction was lifted in \cite{GG20}. The case of cubic mobility without additional conjoining-disjoining pressure term was recently treated in \cite{DGGG20}. Recently, these results were extended to $2+1$ dimensions in \cite{MG21} and \cite{S21}. 

\section{The thin-film equation as a formal gradient flow}
The gradient flow structure of the thin-film equation is folklore by now (cf. \cite[p.2092 ff.]{O98}); we recall it for the reader's convenience.
In $1+1$ dimensions the equation takes the form
\begin{align}\label{tfe}
    \partial_t h + \partial_x \bra{ M(h) \partial^3_x h} = 0,  \quad  (t,x) \in (0,\infty) \times \R  \, ,
\end{align}
where $h$ is the film height and $M$ is called the mobility. In the following discussion, we tacitly think of $h > 0$ -- this paper does not address partial wetting, which would require more modelling assumptions at the contact line, like the equilibrium contact angle, possibly in conjunction with additional dissipation. Equation \eqref{tfe} is based on a lubrication approximation of a fluids equation, like Darcy or Stokes (cf. \cite{GO03, KM15}) and is a fourth order and possibly degenerate parabolic partial differential equation. The mobility $M(h)$ depends on the dissipation mechanism (e.g. Stokes vs. Darcy) and the boundary condition (e.g. no-slip vs. Navier) for the fluid velocity. Often, it is assumed that the mobility follows a power law, i.e. $M(h) \propto h^m$ for some $m \geq 0$. For example, Stokes with no-slip boundary conditions gives rise to $M(h) \propto h^3$ and this is also the most relevant case. Stokes with Navier slip leads to $M(h) \propto h^2$ for $h$ below the slip length, and Darcy yields $M(h) \propto h$.

\smallskip

In this paper, we make the convenient assumption that the solution $h$ of \eqref{tfe} is 1-periodic. Since we clearly have conservation of mass, i.e.
\begin{align}
    \frac{\dx}{\dx{t}} \int_{0}^1 h \dx{x} = 0,
\end{align}
we choose as the configuration space 
\begin{align}\label{configspace}
    \mathcal{M} := \set*{ h:\R \to \R : h \textrm{ 1-periodic}, h > 0, \int_0^1 h \dx{x} = 1}.
\end{align}
The thin-film equation on $\mathcal{M}$ is driven by capillarity in the form of the Dirichlet energy
\begin{align}\label{energy}
    E(h) := \frac{1}{2} \int_0^1 \bra*{\partial_x h}^2  \dx{x}
\end{align}
and limited by viscosity as described by the metric tensor \footnote{for which, by polarization, it is enough to specify the quadratic part} \footnote{Note that $\partial_x j + \dot{h} = 0$ determines $j$ up to an additive constant so that the infimum is taken on a single parameter. We opted for this representation because it extends verbatim to the higher dimensional case and will play a crucial role in the discretization.}
\begin{align}\label{mettens}
    g_h\bra*{\dot{h}, \dot{h}} := \inf_{j}\set*{ \int_0^1 \frac{j^2}{M(h)} \dx{x} : \partial_x j + \dot{h} = 0  }
\end{align}
where $\dot{h} \in T_h \mathcal{M}$, and the tangent space is given by
\begin{align}\label{tangentspace}
    T_h \mathcal{M} = \set*{\dot{h} :\R \to \R : \dot{h} \textrm{ 1-periodic,} \int_0^1 \dot{h} \dx{x} = 0}.
\end{align}
For $M(h) = h$, this metric tensor corresponds to the infinitesimal metric in the $2-$Wasserstein distance (cf. \cite[p.384, (35)-(36)]{BB00} and \cite[p.111]{geometryFO01}).

\medskip

Hence, it is natural to expect that the thin-film equation has the structure of a gradient flow, i.e. that \eqref{tfe} can formally be written as
\begin{align}
    %\frac{\dx}{\dx{t}} h
    \partial_t h = - \nabla E(h).
\end{align}
This can be understood in the following way. The energy functional $E$ gives rise to a differential defined as
\begin{align}\label{differential}
    \mathrm{diff}E|_h.\dot{h} := \frac{\dx}{\dx{s}}\Bigr|_{s = 0} E\bra*{h + s\dot{h}}
\end{align}
for $h \in \mathcal{M}$ and $\dot{h} \in T_h \mathcal{M}$, and we can define a gradient via the Riemannian structure for all $h \in \mathcal{M}$ as the unique element $\nabla E(h) \in T_h \mathcal{M}$ satisfying
\begin{align}\label{gradient}
    \mathrm{diff}E|_{h}. \dot{h} = g_h\bra*{\nabla E(h), \dot{h}} 
\end{align}
for all $\dot{h} \in T_h\mathcal{M}$. Hence, the gradient flow formulation $\partial_t h = - \nabla E(h)$
means that we have
\begin{align}\label{variational}
    \mathrm{diff}E|_{h}.\dot{h} + g_h\bra*{\partial_t h, \dot{h}} = 0
\end{align}
for all $h \in \mathcal{M}$ and $\dot{h} \in T_h\mathcal{M}$. More precisely, by considering the Euler--Lagrange equation for \eqref{mettens}, we have
\begin{align}\label{mettens2}
    g_h\bra*{\dot{h}, \dot{h}} = \intI{M(h) (\partial_x f)^2},
\end{align}
where the $1-$periodic $f$ is such that $\dot{h} + \partial_x\bra*{M(h) \partial_x f} = 0$. By polarization of \eqref{mettens2} and integration by parts we indeed obtain \eqref{variational}:
\begin{align}
    g_h\bra*{\partial_t h, \dot{h}} = \intI{\dot{h} \partial_x^2 h } \stackrel{\eqref{energy}, \eqref{differential}}{=} - \mathrm{diff}E|_h.\dot{h}.
\end{align}
Choosing $\dot{h} = \partial_t h$ in \eqref{variational} we recover the energy dissipation identity characteristic of gradient flows
\begin{align}\label{energydiss}
    \frac{\dx}{\dx{t}} E(h) = - g_{h}(\partial_t h, \partial_t h) = - \intI{M(h) (\partial^3_x h)^2} \leq 0 \, .
\end{align}
Often, the energy has further contributions next to the one coming from capillarity (cf. \eqref{energy}) giving for instance rise to a disjoining pressure. In fact, the choice of the energy functional will not be important for \cref{coord} and \cref{grsection} and so if not otherwise stated we will not further specify $E$.

\medskip

However, following \cite[p.188, (4.3)]{BF90} we define the function $s$ as a solution to the equation $s'' = \frac{1}{M}$ and then for $E$ being the Dirichlet energy this yields another Lyapunov functional
\begin{align}\label{entropy}
    S(h) := \intI{s(h)}
\end{align}
called entropy in the mathematical literature, and the following entropy estimate
\begin{align} \label{entest}
    \frac{\dx}{\dx{t}} S(h) = - \intI{(\partial_x^2 h)^2} \leq 0
\end{align}
holds. This estimate will play a major role in \cref{sec:pos}.

\medskip

The preservation of positivity
can also be interpreted geometrically in the sense that the evolution on the configuration space $\mathcal{M}$ does not touch its boundary $\partial \mathcal{M}$.

\section{Thermodynamically consistent introduction of fluctuations}\label{sec:therm}
\subsection{Invariant measure on configuration space and the associated reversible dynamics}\label{subsec:eqmeasure}
In agreement with the standard equilibrium thermodynamics, we postulate that the invariant measure on configuration space of the stochastic dynamics is given by the Gibbs measure
% Starting from the viewpoint of the deterministic thin-film equation as a gradient flow governed by capillarity and limited by viscosity, our general philosophy is that also the evolution of the law of the stochastic thin-film equation on a short-time scale is completely described by the kinetics, i.e. the metric tensor $g$ and on a long-time scale by the energetics, i.e. the energy functional $E$. Formally, the energy functional gives rise to a Gibbs measure:
\begin{align}\label{invmeasure}
    \dx\nu(h) = \frac{1}{Z} e^{-\beta E(h)} \dx h
\end{align}
for some $\beta > 0$, which up to the Boltzmann factor is the inverse temperature, and a normalization constant $Z$. Here one thinks of $\dx h$ as a uniform measure on the configuration space $\mathcal{M}$. In the special case where the energy functional is the Dirichlet energy (cf. \eqref{energy}), the measure \eqref{invmeasure} looks similar to the classical Wiener measure. This relation, though, is not quite correct due to the following three reasons. First of all, we are on a periodic domain and, secondly, we have the additional constraint $\int_{0}^1 h \dx{x} = 1$. Finally, the restriction to the orthant $\set*{h > 0}$ is the major difference. 

\medskip

Hence we have to think of \eqref{invmeasure} as a
% a shifted (so as to satisfy the volume constraint) 
Gaussian measure conditioned to be non-negative, i.e.
\begin{align}\label{invmeasure2}
    \dx\nu(h) = \frac{1}{Z} \mathbbm{1}\set*{h > 0} \dx\mu(h)
\end{align}
where $\mu$ is the so-called Gaussian free field, i.e. the stationary Gaussian measure with covariance operator given by $\bra*{-\beta\partial_{x}^2}^{-1}$ and conditioned on the spatial average being $1$.
We will refer to the measure $\nu$ on $\mathcal{M}$ as the conservative Brownian excursion due to its reminiscence to the classical Brownian excursion from stochastic analysis. Notice, however, that unlike in the case of the classical Brownian excursion, the set $\set*{h \geq 0} $ we are conditioning on is not a null set with respect to the measure $\mu$. In other words, the conservative Brownian excursion \eqref{invmeasure2} is absolutely continuous with respect to the Gaussian free field, and it is well known that the latter is supported on $C^{\frac{1}{2}-}$-functions, and hence so is $\nu$.
% It is quite straightforward to see that the measure \eqref{invmeasure2} has the \green{same support} as the Brownian excursion
% which in turn has the same pathwise regularity as the Brownian motion (). 
% would refer to a Lebesgue measure on the infinite-dimensional configuration space $\mathcal{M}$ which of course does not exist but o %This measure also features an entropic repulsion which means that it is supported on the orthant $\set*{h > 0}$. 

\medskip

In the case of zero Dirichlet boundary data, the Brownian bridge conditioned to non-negative functions $\dx\tilde\nu(h) = \frac{1}{Z} \mathbbm{1}_{\set*{h \geq 0}} \dx\mu(h)$ corresponds to the law of the Brownian excursion, which in turn is the law of the 3d Bessel bridge (cf. \cite[p.205, Theorem 3]{Z01}). As a consequence, the transience of the 3d Brownian motion implies that $\tilde\nu$ is supported on positive functions. This repulsive effect of the boundary $\partial \mathcal{M}$ is called entropic repulsion.
Entropic repulsion in discrete systems and interface models has been analyzed, for example, in \cite{DG00}. Brownian excursion with fixed average has been realized as an invariant measure of an SPDE in \cite{Z08}. 
% In the case of the stochastic thin-film equation, this degenerate mobility is complemented by the entropic repulsion indicated above. Since entropic repulsion is a purely stochastic phenomenon, it could offer an alternative approach to proving strict positivity of solutions to the stochastic thin-film equation. 
%this can be seen as follows: by Theorem 1.5 on p.42 in \cite{RB92} the Brownian excursion has the same support as a three dimensional Bessel process which is the radial part of a three dimensional Brownian motion by Proposition 3.21 on p. 159 in \cite{KS88};

\medskip

We note in passing that in $2+1$-dimensions, the Gaussian measure would be related to
the two-dimensional Gaussian free field, so that in view of the latter's ultraviolet
logarithmic divergence, the conditioning on $h > 0$ is 
(borderline) singular; hence the nature of the Gibbs measure is unclear in this case.

\medskip

We now turn to the stochastic dynamics.
We follow the standard Ansatz that the time evolution of the law $\nu_{t}$ -- which we will assume to be absolutely continuous with respect to the invariant measure $\nu$ -- of the stochastic thin-film equation is described by the following Fokker--Planck equation in variational form, i.e. we have
\begin{align}\label{fpevar}
       \frac{\dx}{\dx{t}} \int_{\mathcal{M}} \zeta \dx{\nu_{t}} = - \frac{1}{\beta} \int_{\mathcal{M}} g\bra*{\nabla \zeta, \nabla f_t} \dx{\nu}
       %\rho_0 = \rho
\end{align}
for all sufficiently nice test functions $\zeta$ and where $f_t := \frac{\dx \nu_t}{\dx \nu}$. It is obvious from \eqref{fpevar} that $\nu$ is indeed invariant. The symmetry of the so-called Dirichlet form on the r.h.s. of \eqref{fpevar} implies that the generator $\mathcal{L}$, which is defined as the representation of the Dirichlet form w.r.t. $L^2(\dx \nu)$, is symmetric. This in turn yields that the stochastic process is reversible, meaning that the invariant measure on path space is invariant under reversing the time direction.
%As is well known this leads to the following PDE formulation of the Fokker-Planck equation
%\begin{align}
%    \begin{cases}
%        \partial_t \rho_t(h) = \epsilon \Delta \rho_t(h) + \nabla \cdot \bra*{\nabla E(h) \rho_t(h)} \ \mathrm{on} \ \mathcal{M} \\
%        \rho_0 = \rho
%    \end{cases}
%\end{align}
%where $\nabla$ denotes the Riemannian gradient defined in \ref{gradient}, $\nabla \cdot$ is the Riemannian divergence defined as being the %$L^2(dh)$-adjoint of the Riemannian gradient, and $\Delta = \nabla \cdot \nabla$ is the Laplace-Beltrami operator on $\mathcal{M}$.
As we will see later, this ansatz will ensure that the dynamics obey the detailed balance condition known from thermodynamics. 

\subsection{Renormalization of the thin-film equation with thermal noise}
In \cite[(4)]{Davidovitch} it has been suggested that the thin-film equation with thermal noise is given by 
\begin{align}\label{stfe}
    \partial_t h + \partial_x \bra*{M(h)\partial_x^3 h} = \partial_x \bra*{\sqrt{M(h)} \xi}
\end{align}
where $\xi$ denotes space-time white noise. In the course of this paper, it will become apparent that \eqref{stfe} arises from \eqref{fpevar}.
% In view of \eqref{finaldstfematrix} one would expect that the SPDE corresponding the law $\nu_t$ which evolves according to~\eqref{fpevar} should have a similar structure. 
% This is a so called singular SPDE, i.e. there are nonlinear terms which are not well-defined in the classical sense. 
First, we explain why equation \eqref{stfe} is singular
as an SPDE which means that there are nonlinear terms which are not well-defined a priori in a classical sense. This is in contrast to versions of the thin-film equation driven
by a less singular (and thus less physical)
noise than white noise, for which a well-posedness theory exists, see the discussion in Section 2. 
% As we have mentioned already, by thermodynamic consistency, the stationary measure formally
% is the Gibbs measure for the
% surface energy in its thin-film version $E(h)=\intI{(\partial_x h)^2}$. 
% Specifying to the torus as physical domain, 
% this measure can be considered as the stationary Gaussian measure with covariance operator 
% $(-\partial_x^2)^{-1}$, restricted to the (convex) state space of $h\ge 0$ and $\intI{ h}=1$.

\medskip

As a consequence of the characterization of the invariant measure on configuration space in \cref{subsec:eqmeasure},
we expect typical solutions $h$ of the thin-film equation with thermal noise
to have spatial regularity in the H\"older class $C^{\frac{1}{2}-}$ and not better. 
Hence, the product $M(h)\partial_x^3h$ appearing in the thin-film operator
is the product of a function in $C^{\frac{1}{2}-}$ and a distribution in the
negative H\"older space\footnote{see a couple of sentences below for a definition} 
$C^{-\frac{5}{2}-}$ and thus ill-defined (and more than just border-line 
since $(\frac{1}{2}-)+(-\frac{5}{2}-)
=-2-$). 

\medskip

Moreover, we encounter a similar difficulty in the multiplicative noise term
that formally is given by
$\partial_x(\sqrt{M(h)}\xi)$:
%with space-time white noise $\xi$: 
Since the effective dimension for our fourth-order parabolic 
operator in one space dimension is $4+1=5$, $\xi$ is in the negative H\"older class
$C^{-\frac{5}{2}-}$ (which can be defined as $\partial_tC^{\frac{3}{2}-}$ 
$+\partial_x^3C^{\frac{1}{2}-}$, where space-time H\"older spaces are defined
w.~r.~t.~to the anisotropic fourth-order parabolic Carnot-Carath\'eodory norm).
Hence the product $\sqrt{M(h)}\xi$ has the same singular
nature as the product $M(h)\partial_x^3h$. This similarity in the degree of singularity
is reminiscent of quasi-linear second-order equations (cf. \cite{OW19}).
We stress that these difficulties are unrelated to the 
degeneracy\footnote{meaning that $M(0)=0$} of $M$.

\medskip

Hence, the thin-film equation with thermal noise is in need 
of a renormalization, a pressing and attractive topic for the theory of singular SPDE.
In this paper, we do not further
address this issue for several reasons: 1) In 1+1-space dimensions,
as mentioned above, the invariant measure is not in need of a renormalization. Hence
the situation is better than in case of the well-studied stochastic quantization equation\footnote{which comes in form of the Allen-Cahn equation driven by space-time white noise}.
The 
invariant measure for the latter equation\footnote{also known as $\phi^4$ model in quantum field theory}
is in need of a renormalization for space dimensions $\ge 2$ (and renormalizable
in dimensions $<4$).
2) In this paper, we focus on structural properties of spatial discretizations that
can be rigorously addressed without a well-posedness theory for the continuum limit.
3) A simple but typical scaling argument
suggests that our problem is renormalizable in 1+1-space dimensions. 
Indeed, zooming in on small length and time scales through
\begin{align}\label{rescaling}
    x=\ell\hat x,\quad t=\ell^4\hat t,\quad h=1+\ell^\frac{1}{2}\hat h,\quad
    \xi=\ell^{-\frac{5}{2}}\hat \xi,
\end{align}
where the rescaling of $\xi$ is such that $\hat\xi$ is another instance of space-time white noise,
and where $1$ could be replaced by any positive constant,
the equation \eqref{stfe} turns into
\begin{align}
    \partial_{\hat t}\hat h
    +\partial_{\hat x}\big(M(1+\ell^\frac{1}{2}\hat h)\partial_{\hat x}^3\hat h\big)
    =\partial_{\hat x}\big(\sqrt{M(1+\ell^\frac{1}{2}\hat h)}\hat\xi\big),
\end{align}
from which we learn that on small scales, the non-linearity fades away
\footnote{this discussion obviously ignores additional difficulties that may arise
from the degeneracy of the mobility}. A similar computation shows that in 2+1-space dimensions the stochastic thin-film equation is critical, i.e. the rescaling \eqref{rescaling} leaves the equation \eqref{stfe} invariant and hence the nonlinear terms persist on small scales.

\medskip

There is a fourth point that we would like to make. Although at first sight the singular nature of the equation is very far from borderline, it is better than expected in some specific cases. As is common in the deterministic
rigorous treatment, one could rewrite the non-linearity in the thin-film operator in a less singular way: 
% in case of linear mobility
% as $h\partial_x^3h$ $=\partial_x^3\frac{1}{2}h^2-\partial_x\frac{3}{2}(\partial_xh)^2$.
% However, the term $(\partial_xh)^2$ is still ill-defined, like in case of the
% KPZ equation (cf. \cite[Thm. 15.1, p. 223]{FH14}).
\begin{align}
    M(h)\partial_x^3 h = \partial_x^3 \bar{M}(h) - \frac{3}{2} \partial_x \bra*{M'(h) \bra*{\partial_x h}^2} + \frac{1}{2} M''(h) \bra*{\partial_x h}^3
\end{align}
where $\bar{M}$ is the antiderivative of $M$. Of course the terms $\bra*{\partial_x h}^2$ and $\bra*{\partial_x h}^3$ are still singular but if we choose the following ansatz for renormalization which is inspired by the $\phi^4$-model 
\begin{align}
    \bra*{\partial_x h}^2 \to \bra*{\partial_x h}^2 - C , \quad \bra*{\partial_x h}^3 \to \bra*{\partial_x h}^3 - 3C \partial_x h
\end{align}
% where $C$ is a divergent constant, then we notice that since
the divergent constant $C$ drops out since by the chain rule 
% \begin{align}
%     -3C \partial_x M'(h) + 3CM''(h)\partial_x h = 0
% \end{align}
% and thus
\begin{align}
    &-\frac{3}{2}\partial_x\bra*{M'(h) \bra*{\bra*{\partial_x h}^2 - C }} + \frac{1}{2} M''(h) \bra*{\bra*{\partial_x h}^3 - 3C \partial_x h} \\
    &= - \frac{3}{2}\partial_x \bra*{M'(h) \bra*{\partial_x h}^2} + \frac{1}{2}  M''(h) \bra*{\partial_x h}^3.
\end{align}
While this argument suggests that the non-linearity $M(h) \partial_x^3h$ is less singular than expected, we now argue that the non-linearity $\sqrt{M(h)}\xi$ can be completely avoided in case of linear mobility, i.e. $M(h) = h$.
It is well known (cf. \cite[ p.74, Theorem 2.18]{CV03}) that for linear mobility under the change of variables $h \mapsto X$ where $X$ is the inverse distribution function of $h$, i.e.
\begin{align}\label{Xdef}
    z = \int_0^{X(z)} h(x) \dx{x},
\end{align}
the metric tensor transforms as
\begin{align}
    g_h\bra*{\dot{h}, \dot{h}} = \int_{0}^1 \dot{X}^2 \dx z = g_X\bra*{\dot{X}, \dot{X}}.
\end{align}
% Taking the derivative twice with respect to $z$ of \eqref{Xdef} yields
% \begin{align}\label{formulah}
%     1 = h(X(z)) \frac{\dx}{\dx z} X(z)
% \end{align}
% as well as
% \begin{align}\label{formuladxh}
%     0 = \partial_xh(X(z)) \bra*{\frac{\dx}{\dx z}X(z)}^2 + h(X(z)) \frac{\dx^2}{\dx z^2} X(z).
% \end{align}
% Multiplying \eqref{formuladxh} with $h(X(z))^2$ and invoking \eqref{formulah} we end up with
% \begin{align}\label{formuladxh2}
%     \partial_x h(X(z)) = - h(X(z))^3  \frac{\dx^2}{\dx z^2} X(z).
% \end{align}
% Hence we compute for the Dirichlet energy
% \begin{align}
%     E(h)&:= \frac{1}{2} \int_0^1 (\partial_x h)^2 \dx x = \frac{1}{2} \int_0^1 (\partial_x h(X(z)))^2 \frac{\dx}{\dx z}X(z) \dx z \\
%     &\stackrel{\eqref{formuladxh2}}{=} \frac{1}{2} \int_0^1 \bra*{h(X(z))^3 \frac{\dx^2}{\dx z^2} X(z)}^2 \frac{\dx}{\dx z}X(z) \dx z \\
%     &\stackrel{\eqref{formulah}}{=} \frac{1}{2} \int_0^1\frac{\bra*{\frac{\dx^2}{\dx z^2} X(z)}^2}{\bra*{\frac{\dx}{\dx z}X(z)}^5} \dx z \\
%     &=: E(X).
% \end{align}
The Dirichlet energy transforms according to
\begin{align}
     E(X) = \frac{1}{2} \int_0^1\frac{\bra*{\frac{\dx^2}{\dx z^2} X(z)}^2}{\bra*{\frac{\dx}{\dx z}X(z)}^5} \dx z.
\end{align}
% Moreover, for some $\delta X$ we compute
% \begin{align}
%     \mathrm{diff}E|_X.\delta X &= \frac{1}{2} \int_0^1 2 \frac{\frac{\dx^2}{\dx z^2} X(z)}{\bra*{\frac{\dx}{\dx z}X(z)}^5} \frac{\dx^2}{\dx z^2}(\delta X(z)) - 5 \frac{\bra*{\frac{\dx^2}{\dx z^2} X(z)}^2}{\bra*{\frac{\dx}{\dx z}X(z)}^6} \frac{\dx}{\dx z}(\delta X(z)) \dx z \\
%     &= \int_0^1 \bra*{\frac{\dx^2}{\dx z^2}\bra*{\frac{\frac{\dx^2}{\dx z^2} X(z)}{\bra*{\frac{\dx}{\dx z}X(z)}^5}} + \frac{5}{2} \frac{\dx}{\dx z}\bra*{\frac{\bra*{\frac{\dx^2}{\dx z^2} X(z)}^2}{\bra*{\frac{\dx}{\dx z}X(z)}^6}}} \delta X(z) \dx z.
% \end{align}
Hence the deterministic dynamics amount to the $L^2$-gradient flow of $E$, which is seen to assume the form
\begin{align}
    \partial_t X = \frac{1}{4} \partial_z^3 \bra*{\partial_z X}^{-4} - \frac{5}{8} \partial_z \bra*{\partial_z \bra*{\partial_z X}^{-2}}^2
\end{align}
and then \eqref{fpevar} can be seen to translate into
\begin{align}\label{lagstfe}
    \partial_t X = \frac{1}{4} \partial_z^3 \bra*{\partial_z X}^{-4} - \frac{5}{8} \partial_z \bra*{\partial_z \bra*{\partial_z X}^{-2}}^2 + \xi
\end{align}
where $\xi$ is space-time white noise. The first term on the right hand side of \eqref{lagstfe} is well-defined since $\partial_z X$ behaves like $h$ (cf. \eqref{Xdef}), and a non-linearity in the H\"older continuous $h$ is still harmless.
For the second term on the right hand side of \eqref{lagstfe}, we notice that it is a ``KPZ-like'' term followed by a derivative. Since the renormalization constant for the KPZ equation does not depend on the space variable (cf. \cite[p.223, Theorem 15.1]{FH14}) we might expect that in this case it is annihilated by the outer derivative.  Thus, one may expect that the leading order counter terms are zero and one obtains only higher order counter terms.
% This heuristic is confirmed by numerical simulations even for $m=3$.
\medskip

 We will comment further on the possible structure of a renormalizing counter term in~\cref{rem:itocorrection}, once we introduce both our discretization and the central difference discretization. Furthermore, in the numerical experiments performed in \cref{conv2} we observe that the two-point\footnote{in time} distribution functions of the two discretizations we are considering in this paper converge to the same object. This provides some numerical evidence for the our guess that equation \eqref{stfe} is less singular than expected, even for $M(h) = h^3$.

\section{Discretization}
A numerical treatment requires a discretization. From the Fokker--Planck equation
in its variational form (4.2) we learn that it is determined by the
triple $({\mathcal M},g,E)$, which hence we need to discretize. For the
function space ${\mathcal M}$, we choose a Finite Element discretization. More precisely, we fix $N \in \N$ and denote the equidistant partition of the torus by $\set*{x_i}_{i = 1, \dots, N}$. Then we denote by $P_1$ the space of $1$-periodic, continuous, and piecewise linear (with respect to the equidistant partition) functions and we set
% Since $\mathcal{M}$ is infinite-dimensional and we want to introduce coordinates as seen in Section 4.1, we discretize in the spatial variable such that we end up with a Fokker--Planck equation on a finite-dimensional manifold. To this end, we fix $N \in \N$ and denote the equidistant partition of the unit interval $[0, 1]$ by $\set*{x_i}_{i = 1, \dots, N}$.
% %, i.e. the partition satisfies the property that $x_i + x_j = x_{i + j}$
% %for $i, j \in \set*{1, \dots, N}$ and where we understand addition modulo $N$. 
% Then we denote by $P_1$ the space of $1$-periodic, continuous, and piecewise linear (with respect to the equidistant partition) functions. Abstractly, a spatial discretization of a gradient flow $\partial_t h = - \nabla E(h)$ is given by three ingredients. First, we need a finite-dimensional configuration space $\mathcal{M}_N$, which for us will always be 
\begin{align}
    \mathcal{M}_N := \mathcal{M} \cap P_1 = \set*{h \in P_1 : h > 0, \intI{h} = 1},
\end{align}
which then comes with a canonical tangent bundle $T\mathcal{M}_N$. 
% Secondly, an energy functional on $\mathcal{M}_N$, which for us will always be the restriction of the energy $E$ (see~\eqref{energy}) to $\mathcal{M}_N$ which amounts to an energetically conformal discretization and 
For the functional $E$, we make a conformal Ansatz by restricting to ${\mathcal M}_N$. 
This gives rise to a discretized conservative Brownian excursion $\nu_N$ according to \eqref{invmeasure}. 
Finally, we need to specify a metric tensor on $T\mathcal{M}_N \otimes T\mathcal{M}_N$.
A natural discretization of the metric tensor would be its restriction to the space $\mathcal{M}_N$. However, we will not consider this discretization in this paper for reasons explained in \cref{distancetoboundary}.
\section{Introducing coordinates}\label{coord}
In \cref{subsec:eqmeasure} we have already seen how a gradient flow structure, as determined by a Riemannian manifold $(\mathcal{M}, g)$ and a function $E$, gives rise to a stochastic process via the Fokker--Planck equation (cf. \eqref{fpevar}). In this section, we aim to write this process in It\^o form. To this end, we need to introduce coordinates. Let $\mathcal{M}$ be a differentiable Riemannian manifold with boundary, equipped with a Riemannian metric $g$, and assume that we have a global chart
\begin{align}
    (\varphi^{\al})_{\al} : \mathcal{M} \to \Delta,
\end{align}
where $\Delta$ is an open subset of $\R^N$ with coordinates enumerated by $\al=1, \dots, N$. 
Moreover, we think of $\mathcal{M}$ as equipped with a probability measure $\nu$. Then, these data give rise to a Fokker--Planck equation in variational form (cf. \eqref{fpevar}) which describes the time evolution of the probability measure $\nu_t$ which we assume to be absolutely continuous with respect to $\nu$.
% \begin{align}\label{fpeman}
%     \frac{\dx}{\dx{t}} \int_{\mathcal{M}} \zeta \dx \nu_{t} = - \frac{1}{\beta} \int_{\mathcal{M}} g\bra*{\nabla \zeta, \nabla f_t} \dx \nu
% \end{align}
% for all sufficiently nice test functions $\zeta$ where $f_t := \frac{\dx \nu_t}{\dx \nu}$. 
Hence \eqref{fpevar} gives rise to a Markovian stochastic process on $\mathcal{M}$ of which $\nu$ is the invariant measure. By the symmetry of the right hand side of \eqref{fpevar}, the resulting process on path space is reversible. 

\medskip

The chart $(\varphi^{\al})_{\al}$ allows to pull back functions from $\Delta$ to $\mathcal{M}$ and thus to push forward measures from $\mathcal{M}$ to  $\Delta$. For notational convenience we will not distinguish between $\zeta \circ \varphi$ and $\zeta$, between $f_t \circ \varphi$ and $f_t$, and between $\varphi \# \nu_t$ and $\nu_t$, and will write $h^{\al}$ instead of $\varphi^{\al}(h)$. A quick calculation shows that Radon-Nikodym derivatives transform like functions; in particular, the relation $\dx \nu_t = f_t \dx \nu$ lifts from $\mathcal{M}$ to $\Delta$.
By the usual duality, we define the gradient of $\varphi^{\al}$ as the unique element $\nabla \varphi^{\al}(h) \in T_h \mathcal{M}$ satisfying
\begin{align}
    \mathrm{diff}\varphi^{\al}|_{h}.\dot{h} = g_h\bra*{\nabla \varphi^{\al}(h), \dot{h}}
\end{align}
for all $\dot{h} \in T_h \mathcal{M}$ (cf. \eqref{gradient}).
While here, we think of the metric tensor as a bilinear form on tangent vectors, it is now convenient to consider its dual, a bilinear form on co-tangent vectors like differentials. The coordinate representation of this dual metric tensor is given by
\begin{align}\label{dualmetric}
    g^{\al \al'}(h) = \mathrm{diff}\varphi^{\al}|_{h}. \nabla \varphi^{\al'}(h).
\end{align}
The upper indices indicate the 2 contra-variant nature of the dual metric tensor. In fact, seen as a matrix, it is the inverse of the metric tensor $g_{\alpha\alpha'}(h)$ (cf. \eqref{ident}). 
% \begin{rem}
%     Note that in coordinates the dual metric is the inverse of the metric in coordinates (see \eqref{ident}). Thus we refer to the dual metric in coordinates by upper indices.
% \end{rem}
Then by \ref{gradpairing}, we get
\begin{align}
    g\bra*{\nabla \zeta, \nabla f_t} = g^{\al \al'} \partial_{\al} \zeta \partial_{\al'} f_t.,
\end{align}
where from now on we will use the Einstein convention of summing over repeated indices if not otherwise stated. 
Hence, we end up with the Fokker--Planck equation in variational form on $\Delta$, i.e.
\begin{align}\label{dfpevar}
    \frac{\dx}{\dx{t}} \int_{\Delta} \zeta \dx \nu_t &= - \frac{1}{\beta} \int_{\Delta} g^{\al \al'} \partial_{\al} \zeta   \partial_{\al'} f_t \dx \nu
\end{align}
for all sufficiently nice test functions $\zeta$.
Without much loss of generality, we assume that $\nu$ is given by
\begin{align}\label{SDEbar}
    \dx \nu(h) = \frac{1}{Z_{\beta}} e^{-\beta E(h)} \dx h
\end{align}
for some function $E : \Delta \to \R$ where $\dx h$ denotes the Lebesgue measure on $\Delta$. For brevity we set $\rho_{\infty} := \frac{1}{Z_{\beta}} e^{-\beta E}$.
%\begin{align}
%    \int_{\Omega} \hat{\zeta}(\hat{h}) \dx \hat{\nu} (\hat{h}) = \int_{\mathcal{M}} \zeta(h) \frac{1}{Z_{\beta}} e^{-\beta E(h)} \dx h = \int_{\Omega} %\hat{\zeta}(\hat{h}) \frac{1}{Z_{\beta}} e^{-\beta \hat{E}(\hat{h})} \dx \hat{h}
%\end{align}
%and hence
%\begin{align}
%    \dx \hat{\nu}(\hat{h}) = \frac{1}{Z_{\beta}} e^{-\beta \hat{E}(\hat{h})} \dx \hat{h}
%\end{align}
%where $\hat{E}$ is such that $E = \hat{E} \circ \varphi$.
Then we apply the divergence theorem which yields the following equation for $f_t$
\begin{align}\label{feq}
    \begin{cases}
        \rho_{\infty} \partial_t f_t  = \frac{1}{\beta}  \partial_{\al} \bra*{g^{\al \al'} \rho_{\infty} \partial_{\al'} f_t }  \ &\mathrm{in} \ \Delta, \\
       n_{\al} g^{\al \al'} \partial_{\al'} f_t   = 0\ &\mathrm{on} \ \partial \Delta, \\
    \end{cases}
\end{align}
where $n = (n_{\al})_{\al}$ denotes the outer normal of the boundary $\partial \Delta$. Moreover, considering the probability density $\rho_t$ defined through
\begin{align}
    \rho_t := f_t \rho_{\infty}
\end{align}
we see by \eqref{feq} and the Leibniz rule that $\rho_t$ solves the Fokker--Planck equation
\begin{align}\label{fpebar}
    \begin{cases}
        \partial_t \rho_t = \partial_{\al}\bra*{g^{\al \al'} \bra*{\frac{1}{\beta}  \partial_{\al'} \rho_t + \rho_t \partial_{\al'} E }} \ &\mathrm{in} \ \Delta, \\
        n_{\al} g^{\al \al'} \bra*{  \frac{1}{\beta}\partial_{\al'} \rho_t + \rho_t \partial_{\al'} E } = 0 \ &\mathrm{on} \ \partial \Delta. \\
    \end{cases}
\end{align}
Note that \eqref{fpebar} can be seen as a continuity equation for the probability density with the probability flux $J(\rho)$ being defined in components as
\begin{align}
    J^{\al}(\rho) :=  g^{\al \al'} \bra*{\frac{1}{\beta} \partial_{\al'} \rho + \rho \partial_{\al'} E }.
\end{align}
Then not only is $\rho_{\infty}$ the stationary solution of \eqref{fpebar} but in fact we have that
\begin{align}\label{fluxvan}
    J(\rho_{\infty}) \equiv 0
\end{align}
which corresponds to the so-called detailed balance condition (cf. \cite[p.119, (4.97)]{bookPavliotis}). 
% More precisely, if we define the Fokker--Planck operator
% \begin{align}
%     \mathcal{L}^*\rho := partial_{\al}\bra*{g^{\al \al'} \bra*{\frac{1}{\beta}  \partial_{\al'} \rho_t + \rho_t \partial_{\al'} E }}
% \end{align}
% then the detailed balance condition implies that the generator $\mathcal{L}$ is a self adjoint operator in the space $L^2(\dx \nu)$.
% The Fokker--Planck operator, i.e. the right hand side in \ref{fpehat}, can be rewritten as
% \begin{align}
%     &\partial_{\al} \bra*{\frac{1}{\beta} g^{\al \al'}(h) \partial_{\al'} \rho_t(h) + g^{\al \al'}(h) \partial_{\al'} E(h) \rho_t(h)} \\
%     &= \partial_{\al} \bra*{\frac{1}{\beta}  \partial_{\al'}\bra*{g^{\al \al'}(h) \rho_t(h)} - \partial_{\al'} g^{\al \al'}(h) \rho_t(h) + g^{\al \al'}(h) \partial_{\al'} E(h) \rho_t(h)}.
% \end{align}
Instead of describing the evolution of the law through \eqref{fpebar}, we can use the duality between measures and continuous functions to compute the evolution of observables $u$ of the process. Indeed, by computing the formal adjoint of~\eqref{fpebar}, we can read off the following backward Kolmogorov equation:
% For $T > 0$ we set $u_t := f_{T-t}$ for $f_t$ in \eqref{feq} and notice that $u_t$ solves the backward Kolmogorov equation 
\begin{align}\label{bke}
        \partial_t u_t &= \frac{1}{\beta} \partial_{\al} \bra*{g^{\al \al'} \partial_{\al'} u_t} - \partial_{\al} u_t g^{\al \al'} \partial_{\al'} E \\
        &= \frac{1}{\beta} g^{\al \al'} \partial_{\al \al'} u_t + \partial_{\al} u_t \bra*{\frac{1}{\beta} \partial_{\al'} g^{\al' \al} - g^{\al \al'} \partial_{\al'} E} 
\end{align}
in $\Delta$ equipped with the boundary conditions on $\partial \Delta$
\begin{align}\label{bkeb}
    n_{\al} g^{\al \al'}  \partial_{\al'} u_t  = 0.
\end{align}
Note that the right hand side of~\eqref{bke} is the generator of the associated diffusion process. Thus, we can use~\eqref{bke} to identify the stochastic process $h^{\al}_t$ arising from \eqref{dfpevar}. Indeed, its drift is given by  $-\bra*{\frac{1}{\beta} \partial_{\al'} g^{\al' \al} - g^{\al \al'} \partial_{\al'} E}(h_t)$ and its diffusion matrix by $\frac{1}{\beta} g^{\al \al'}(h_t)$. This gives rise to the following stochastic differential equation in It\^o form (cf. \cite[p.126, Theorem 7.3.3 and p.152, Theorem 8.4.3]{BO03})
\begin{align}\label{itosdebar}
        \dx h^{\al}_t = \bra*{ - g^{\al \al'}(h_t) \partial_{\al'}E(h_t) + \frac{1}{\beta}  \partial_{\al'} g^{\al' \al}(h_t) } \dx t + \sigma^{\al}_{\al'}(h_t) \sqrt{ \frac{2}{\beta}} \dx W^{\al'}_t,
\end{align}
where $\sigma^{\al}_{\al'}$ denotes any matrix satisfying $g^{\al \al'} = \sum_{\al''=1}^N \sigma^{\al}_{\al''} \sigma^{\al'}_{\al''}$, and $W_t$ is a standard Wiener process. Furthermore, the no-flux boundary conditions in \eqref{bkeb} correspond to reflecting boundary conditions in \eqref{itosdebar} (cf. \cite[p.222, Theorem 7.1]{IW89}).
The main purpose of this subsection was to elucidate the emergence of the It\^o-correction term $\frac{1}{\beta}  \partial_{\al'} g^{\al' \al}$.

\section{The Gr\"un--Rumpf metric}\label{grsection}
In \cite[Section 5]{GruenRumpf} the authors have introduced a discretization of the deterministic thin-film equation in a way such that a discrete version of the entropy estimate \eqref{entest} holds; see Lemma \eqref{entestgr}. They propose a finite element discretization and in particular introduce a specific discretization of the mobility. As it turns out, the latter can be interpreted as a mixed finite element discretization with lumping of the metric tensor \eqref{mettens}; see Definition \eqref{grmetric}. At the same time, the authors of \cite{Bertozzi} have considered a finite difference discretization of \eqref{tfe} with a similar discretization of the mobility as \cite{GruenRumpf} that preserves the entropy estimate.
% This corresponds to a spatial discretization of the gradient flow structure. 
% More precisely, our starting point is an energetically conformal finite-elements discretization. 
% Hence the function
% \begin{align}
%     \pi^N : P_1 \to \R^N, f \mapsto \bra*{f(x_1), \dots, f(x_N)}^T
% \end{align}
% is bijective and we will freely identify any $f \in P_1$ with its associated vector $\pi^N(f) \in \R^N$.
%This gives rise to the following definition.
%\begin{defn}
%    For $i \in \set*{1, \dots, N}$ we denote by $\varphi_i : \R \to \R$ the unique piecewise linear and continuous function that satisfies
%    \begin{align}
%        \varphi_i(x_j) = \delta_{ij}
%    \end{align}
%    for all $j \in \set*{1, \dots, N}$. We will often refer to the functions $\set*{\varphi_i}_{i = 1, \dots, N}$ as the hat functions or hat basis.
%\end{defn}

\medskip

As has been discussed in the last section, the space $\mathcal{M}_N$ is the configuration space for the discretized stochastic thin-film equation. Any function in $P_1$, and thus also any $h \in \mathcal{M}_N$, is uniquely determined by its values at the nodal points $\set*{x_i}_{i = 1, \dots, N}$. 
%If we denote as usual for $i = 1, \dots, N$ by $e_i$ the $i$-th unit vector of $\R^N$, then we define $\hat{\varphi}^i$ as the associated element in %$P_1$, i.e. $\pi^N(\hat{\varphi}^i) = e_i$. 
This gives rise to a natural chart
\begin{align}
    \bra*{\varphi^i}_i : \mathcal{M}_N \to \Delta_N
\end{align}
where for $h \in \mathcal{M}_N$ we have
\begin{align}\label{hatbasis}
    h = \varphi^i(h) \hat{\varphi}_i.
\end{align}
Here $\Delta _N$ is the $N$-simplex defined as
\begin{align}
    \Delta_N := \set*{h \in \R^N : h^i > 0, \frac{1}{N} \sum_{i = 1}^N h^i = 1}
\end{align}
and for $i = 1, \dots, N$ we define $\hat{\varphi}_i$ to be the unique piecewise linear and continuous function such that we have
\begin{align}
    \hat{\varphi}_i(x_j) = \delta_{ij}.
\end{align}
The family $\bra*{\hat{\varphi}_i}_{i = 1, \dots, N}$ is of course known as the hat basis in finite elements.
\begin{figure}[h!]
    \begin{tikzpicture}

    \coordinate[label = below:$x_{i-1}$] (A) at (0,0);

    \node at (A)[circle,fill,inner sep=1pt]{};
    
    \coordinate[label = below:$x_{i}$] (B) at (1,0);

    \node at (B)[circle,fill,inner sep=1pt]{};
    
    \coordinate[label = below:$x_{i+1}$] (C) at (2,0);

    \node at (C)[circle,fill,inner sep=1pt]{};

    \coordinate (F) at (2.5,0);
    
    \coordinate (E) at (-0.5,0);
    
    \draw (E) -- (B) -- (F);
    
    \coordinate [label = above:$1$] (D) at (1,1);
    
    \node at (D)[circle,fill,inner sep=1pt]{};
    
    \draw[color=blue] (A) -- (D) -- (C);

    \coordinate[label = below:$\hat{\varphi}_i$] (G) at (0,1);
    \end{tikzpicture}
    \caption{An element $\hat{\varphi}^i$ of the hat basis.}
\end{figure}
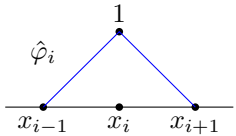
As in Section 4.1 we will write $h^i$ instead of $\varphi^i(h)$. We denote by $P_0$ the space of piecewise constant functions and we note that $T_h \mathcal{M}_N := T_h \mathcal{M} \cap P_1$. 

\medskip

We now turn to the discretization of \eqref{mettens}; in suitable coordinates it amounts to a reinterpretation of the metric considered in Section 5 of \cite{GruenRumpf}, see \cref{recover}. 
% We will give the definition first in the non-periodic case where the discretization scheme is the same except that the partition $\set*{x_i}_{i \in \Z}$ is countably infinite
For the discretization of \eqref{mettens}, following the strategy of first discretizing and then periodizing leads to a simpler result, and we shall follow it here. Hence \cref{grmetric} is phrased with the unit torus replaced by $\R$\footnote{with the abuse of keeping the notation $\mathcal{M}_N$}.
\begin{defn}[\emph{Gr\"un-Rumpf} metric]\label{grmetric}
    Let $h \in \mathcal{M}_N$ and $\dot{h} \in T_h \mathcal{M}_N$. We define a metric tensor on $T_h \mathcal{M}_N \otimes T_h \mathcal{M}_N$ via
    \begin{align}
        &\grm_h(\dot{h}, \dot{h}) := \\
        &\inf_{j} \set*{\int_{\R} \frac{j^2}{M(h)} \dx{x}: j \in P_0, \int_{\R}j \partial_x \zeta \dx{x} = \frac{1}{N} \sum_{i \in \Z} \dot{h}^i \zeta^i \ \, \forall \zeta \in P_1 \ \mathrm{compactly \ supported}}.
    \end{align}
\end{defn}
\begin{rem}
    As mentioned earlier \eqref{grmetric} is a mixed finite element discretization with lumping of \eqref{mettens}. By a mixed discretization, we mean that we are not just discretizing the configuration space but also the space of fluxes, i.e. we require $j \in P_0$. Moreover, lumping means that instead of the $L^2$-inner product $\int_{\R} \dot{h} \zeta \dx{x}$ we use the $\ell^2$-inner product $\frac{1}{N} \sum_{i \in \Z} \dot{h}^i \zeta^i$. 
\end{rem}
 \begin{figure}[h!]
     \begin{tikzpicture}

     \coordinate[label = below:{$x_{\al-} =  x_{i}$}] (A) at (0,0);

    \node at (A)[circle,fill,inner sep=1pt]{};
    
     \coordinate[label = below:{$x_{\al+} = x_{i+1}$}] (B) at (3,0);

     \node at (B)[circle,fill,inner sep=1pt]{};
    
    \draw[color=black] (A) -- (B);

    \coordinate[label = below:$I_{\al}$] (G) at (1.5,0.75);
     \end{tikzpicture}
    \caption{Relation of the intervals $\bra*{I_{\al}}_{\al}$ and the nodal points $\set*{x_i}_{i}$.}\label{fig:ali}
\end{figure}
Now we come to the choice of coordinates. In order to obtain a simpler expression of the metric tensor it is better to introduce another basis than the hat basis.
For any $\al$ let $\bar{\varphi}_{\al} \in P_1$ be given by (see Figure \eqref{fig:ali})
\begin{align}\label{zigzag}
    \bar{\varphi}_{\al} := N^\frac{3}{2} \bra*{ \hat{\varphi}_{\al+} - \hat{\varphi}_{\al-}}.
\end{align}
We call the family $\bra*{\bar{\varphi}_{\al}}_{\al = 1, \dots, N}$ the \emph{zigzag} basis \footnote{As is easily seen it holds that $\sum_{\al=1}^N \bar{\varphi}_{\al} = 0$ and thus the \emph{zigzag} basis is not really a basis. This issue is resolved by requiring that $\sum_{\al = 1}^N h^{\al} = 0$. }. Now we can introduce another set of coordinates given by the chart
\begin{align}
    \bra*{\varphi^{\al}}_{\al} : \mathcal{M}_N \to \R^N
\end{align}
where\footnote{The image of $\bra*{\varphi^{\al}}_{\al}$ is also affine linear.}
\begin{align}\label{barbasis}
    h = \varphi^{\al}(h) \bar{\varphi}_{\alpha} + 1.
\end{align}
Here $1$ denotes the constant function with that value.
Again for simplicity, instead of writing $\varphi^{\al}(h)$ we write $h^{\al}$.

% \begin{rem}
%     The choice of amplitude \eqref{zigzag} is reminiscent of that of a wavelet basis.
% \end{rem}
We note that by the relation \eqref{barbasis}, for every $h \in \mathcal{M}_N$ the induced basis on $T_h \mathcal{M}_N$ is given by the \emph{zigzag} basis.
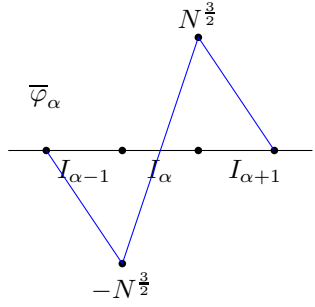
\begin{figure}[h!]
    \begin{tikzpicture}

    \coordinate (A) at (0,0);

    \node at (A)[circle,fill,inner sep=1pt]{};
    
    \coordinate (B) at (1,0);

    \node at (B)[circle,fill,inner sep=1pt]{};
    
    \coordinate (C) at (2,0);

    \node at (C)[circle,fill,inner sep=1pt]{};
    
    \coordinate (D) at (3,0);
    
    \node at (D)[circle,fill,inner sep=1pt]{};
    
    \coordinate (F) at (3.5,0);
    
    \coordinate (E) at (-0.5,0);
    
    \draw (E) -- (A)--node[below]{$I_{\al-1}$} (B) -- node[below]{$I_\al$} (C) -- node[below]{$I_{\al+1}$}(F);
    
    \coordinate [label = below:$-N^\frac{3}{2} $] (G) at (1,-1.5);
    
    \node at (G)[circle,fill,inner sep=1pt]{};
    
    \coordinate [label = above:$N^\frac{3}{2} $] (H) at (2,1.5);
    
    \node at (H)[circle,fill,inner sep=1pt]{};
    
    \draw[color=blue] (A) -- (G) -- (H) -- (D);

    \coordinate[label = below:$\bar{\varphi}_{\al}$] (G) at (0,1);
    \end{tikzpicture}
    \caption{An element $\bar{\varphi}_{\al}$ of the \emph{zigzag basis}.}
\end{figure}
Hence in these coordinates, the metric tensor \eqref{grmetric} takes the form
\begin{align}
    \grm_{\al \al'}(h) := \grm_h\bra*{\bar{\varphi}_{\al}, \bar{\varphi}_{\al'}}.
\end{align}
Note that for any $\al$ we have
\begin{align}\label{lhsgr}
    \frac{1}{N} \sum_{i \in \Z} \bra*{\bar{\varphi}_{\al}}^i \zeta^i = \sqrt{N} \bra*{\zeta^{\al +} - \zeta^{\al -}}.
\end{align}
Similarly, we compute
\begin{align}\label{rhsgr}
    \int_{\R} j \partial_x \zeta \dx{x} = \sum_{\al' \in \Z} j_{\al'} \bra*{\zeta^{\al'+} - \zeta^{\al' -}}
\end{align}
where $j = \sum_{\al \in \Z} j_{\al} \mathbbm{1}_{I_{\al}}$.
Hence we see that an admissible choice is $j = \sqrt{N} \mathbbm{1}_{I_{\al}}$ and since any other choice only differs by an additive constant this is already the optimal choice
and this yields
\begin{align}
    \grm_{\al \al'}(h) = \dashint_{I_{\al}} \frac{1}{M(h)} \dx{x} \ \delta_{\al \al'}.
\end{align}
%for all $\al, \al' \in \set*{1, \dots, N}$.
As in Section 4.1 we denote the dual metric associated to \eqref{grmetric} by $\bra*{\grm^{\al \al'}(h)}_{\al, \al'}$  and, since $\grm^{\al \al''}(h)\grm_{\al'' \al'}(h) = \delta^{\al}_{\al'}$ (see \eqref{ident}) we have
\begin{align}\label{grminverse}
    \grm^{\al \al'}(h) = \bra*{\dashint_{I_{\al}} \frac{1}{M(h)} \dx{x} }^{-1} \delta^{\al \al'}.
\end{align}
%for all $\al, \al' = 1, \dots, N$. 
Having derived this discretization, we again impose a periodic data structure on the discrete level.
\begin{rem}\label{recover}
    On every interval $I_{\al}$ the expression \eqref{grminverse} is the harmonic mean of the mobility $M(h)$ and thus we recover the discretization proposed in \cite[Section 5]{GruenRumpf}. 
\end{rem}
As mentioned in the last section, the discretization of the energy is just the restriction of $E$ to the space $P_1$. 
%For this choice of discretization we choose the (inverse of the) chart
%\begin{align}
%  \tilde{\varphi}^{-1} :  h \mapsto A^T h + \mathbbm{1}.
%\end{align}
Then according to \eqref{itosdebar} this specific discretization gives rise to the following SDE 
\begin{align}\label{grsdebar}
    \dx h^{\al}_t = \bra*{ - \grm^{\al \al'}(h_t) \partial_{\al'}E(h_t) + \frac{1}{\beta} \partial_{\al'} \grm^{\al' \al}(h_t) } \dx t + \sigma^{\al}_{\al'}(h_t) \sqrt{\frac{2}{\beta}} \dx W^{\al'}_t.
\end{align}
\begin{defn}\label{derA}
    Restricting the derivative $\partial_x$ to $P_1$ yields a linear operator $\partial_x : P_1 \to P_0$. We denote the matrix representation of this linear operator with respect to the hat basis on $P_1$ and the basis $\bra*{\mathbbm{1}_{I_{\al}}}_{\al}$ on $P_0$ by $A = \bra*{A^{\al}_i}^{\al}_i$, i.e. we have
    \begin{align}\label{forwarddif}
        A^{\al}_i b^i = N\bra*{b^{\al+} - b^{\al-}}
    \end{align}
    for all vectors $\bra*{b^i}^i$.
    Moreover, its transpose is given by
    \begin{align}
        \bra*{A^T}^{i}_{\al} = A^{\al}_i
        %\delta^{i l} A^{\al'}_{l} \delta_{\al' \al}. 
    \end{align}
\end{defn}
Now we pass from $\al$-coordinates to $i$-coordinates. To this end, we compute
\begin{align}
    h \stackrel{\eqref{barbasis}}{=} h^{\al} \bar{\varphi}_{\al} + 1 \stackrel{\eqref{zigzag}}{=} N^\frac{3}{2} h^{\al} \bra*{\hat{\varphi}_{\al+} - \hat{\varphi}_{\al-}}  + 1 = \sqrt{N} h^{\al} \bra*{A^T}^i_{\al} \hat{\varphi}_i  + 1.
\end{align}
Thus by \eqref{hatbasis} we obtain the formula
\begin{align}\label{bartohat}
    h^i = \sqrt{N} \at^i_{\al} h^{\al} + 1.
\end{align}
%for all $i \in \set*{1, \dots, N}$. 
Then \eqref{parder} and the chain rule yield
\begin{align}\label{hattobar}
    \partial_{\al} = \sqrt{N} \at^i_{\al} \partial_i.
\end{align}
%for $\al \in \set*{1, \dots, N}$. 
% We compute that
% \begin{align}
%     \partial_{\al'} E_N(h_t) = A_{\al' j} \partial_j E(h_t) = A_{\al' j}A^T_{j \al''} A_{\al'' k} h^k_t
% \end{align}
Hence, by applying \eqref{bartohat} to \eqref{grsdebar} and \eqref{hattobar} only to the first drift term, we end up with the following SDE in $i$-coordinates
% \[ N \partial_j \at^i_{\al}  g^{\al \al'}(h_t)  \at^{j}_{\al'}
% \left\{
\begin{align}\label{finaldstfe}
    \dx h^i_{t} =& \bra*{-N \at^i_{\al} \grm^{\al \al'}(h_t) \at^j_{\al'} \partial_j E(h_t) + \frac{\sqrt{N}}{\beta}  \at^{i}_{\al} \partial_{\al'} g^{\al' \al}(h_t) }\dx t \\
    & + \at^i_{\al}\sigma^{\al}_{\al'}(h_t) \sqrt{ \frac{2N}{\beta}} \dx W^{\al'}_t  
\end{align}
% \right.
% \]
subject to reflecting boundary conditions.
It is easy to see that the It\^o-correction term in the discrete thin-film equation with thermal noise \eqref{finaldstfe} does in general not vanish, see \eqref{itocorcomp} for the case $M(h) = h^3$. 

\section{Positivity of the scheme}\label{sec:pos}
As it turns out, the \emph{Gr\"un--Rumpf} metric is the right discretization in order to preserve positivity. From now on it will be important that the energy functional is the Dirichlet energy \eqref{energy}. In view of \cref{derA}, the restriction of $E$ to $\mathcal{M}_N$ assumes the form
\begin{align}
    E(h) = \frac{1}{2N} \sum_{\al = 1}^N \bra*{\bra*{Ah}^{\al}}^2 =\frac{1}{2N} h^{j} A^{\al}_j \delta_{\al \al'} A^{\al'}_{k} h^{k}
\end{align}
and hence
\begin{align}\label{gradE}
    \partial_j E(h) = \frac{1}{N} A^{\al}_j \delta_{\al \al'} A^{\al'}_k h^k.
\end{align}
Plugging this in the first drift term of \eqref{finaldstfe} yields
\begin{align}
    -\at^i_{\al} g^{\al \al'}(h_t) \at^j_{\al'} A^{\al''}_j \delta_{\al'' \al'''} A^{\al'''}_k h^k_t.
\end{align}
Instead of viewing $\partial_j E$ as a covector it makes sense to regard it as a vector. 
To this end, we contract the metric $g^{\al \al'}$ with respect to the ambient Euclidean metric, i.e.
\begin{align}
    g^{\al \al'} = g^{\al}_{\gamma} \delta^{\gamma \al'}
\end{align}
and this yields
\begin{align}\label{compdrift}
g^{\al \al'} \at^j_{\al'} A^{\al''}_j \delta_{\al'' \al'''} A^{\al'''}_k h^k_t 
= g^{\al}_{\al'} A^{\al'}_j \at^j_{\al''} A^{\al''}_k h^k_t 
= g^{\al}_{\al'} \bra*{A A^T A h_t}^{\al'}_k.
    % &-\at^i_{\al} g^{\al}_{\gamma}(h_t) \delta^{\gamma \al'} \at^j_{\al'} A^{\al''}_j \delta_{\al'' \al'''} A^{\al'''}_k h^k_t \\
    % &= -\at^i_{\al} g^{\al}_{\gamma}(h_t) \delta^{\gamma \al'} \at^l_{\al'} \delta_{l j} \delta^{j l'}  A^{\al''}_{l'} \delta_{\al'' \al'''} A^{\al'''}_k h^k_t \\
    % &= -\at^i_{\al} g^{\al}_{\al'}(h_t) A^{\al'}_j \at^j_{\al''} A^{\al''}_k h^k_t.
\end{align}
Furthermore we specify $\sigma^{\al}_{\al'}(h)$ to be the square-root of $g^{\al}_{\al'}(h)$ and from now on we will write $\sqrt{g}^{\al}_{\al'} := \sigma^{\al}_{\al'}$. 
% Moreover, concerning the It\^o-correction term, we write
% \begin{align}
%     \partial_j \at^i_{\al}  g^{\al \al'}(h_t)  \at^{j}_{\al'} &= \partial_l \delta^{lk} \at^i_{\al} g^{\al}_{\gamma}(h_t) \delta^{\gamma \al'} \at^{j}_{\al'} \delta_{jk} \\
%     &= \partial_l \delta^{l k} \at^i_{\al} g^{\al}_{\al'}(h_t) A^{\al'}_k.
% \end{align}
% where $\xi$ denotes Gaussian space-time white noise. Moreover, we will introduce the divergence in $i$-coordinates and set
% \begin{align}
%     {D \cdot \Sigma}^{i} := \sum_{j = 1}^N \partial_{j} \Sigma^{i}_j
% \end{align}
% for some matrix field $\Sigma = \bra*{\Sigma^{i}_j}^{i}_j$.
%N \partial_l \delta^{l k} \at^i_{\al} g^{\al}_{\al'}(h_t) A^{\al'}_k

\medskip

Combining \eqref{finaldstfe} and \eqref{compdrift} we end up with the following SDE
\begin{align}\label{finaldstfe2}
    \dx h^i_{t} =& \bra*{-\at^i_{\al} g^{\al}_{\al'}(h_t) A^{\al'}_j \at^j_{\al''} A^{\al''}_k h^k_t + \frac{\sqrt{N}}{\beta}  \at^i_{\al} \partial_{\al'} g^{\al' \al}(h_t)}  \dx t \\
    & + \at^i_{\al} \sqrt{g}^{\al}_{\al'}(h_t) \sqrt{ \frac{2N}{\beta} } \dx W^{\al'}_t.
\end{align}
Introducing the abbreviations $G^{-1}(h) := \bra*{g^{\al}_{\al'}(h)}^{\al}_{\al'}$ and $\sqrt{G}^{-1}(h) := \bra*{\sqrt{g}^{\al}_{\al'}(h)}^{\al}_{\al'}$ as well as the (rescaled) divergence-operator in $\al$-coordinates  
\begin{align}
    \bra*{\bar{D} \cdot \Sigma}^{\al} := \frac{1}{\sqrt{N}} \partial_{\al'} \Sigma^{\al' \al}
\end{align}
for some matrix field $\Sigma = \bra*{\Sigma^{\al' \al}}^{\al' \al}$
%N D\cdot \bra*{ A^T g^{-1}(h_t) A}
we see that \eqref{finaldstfe2} can be written in matrix form as
\begin{align}\label{finaldstfematrix}
    \dx h_t &= \bra*{-A^T G^{-1}(h_t) A A^T A h_t + \frac{N}{\beta} A^T \bar{D} \cdot G^{-1}(h_t) } \dx{t} 
    + A^T \sqrt{G}^{-1}(h_t) \sqrt{\frac{2N}{\beta}} \dx W_t.
\end{align}
The following table provides the connection to the continuum case:
\begin{center}
	\begin{tabular}{c |  c } 
		
		discrete & continuum \\ [0.5ex] 
		\midrule
		
		$G^{-1}(h)$ & $M(h)$, see \eqref{grminverse}           \\ [1 ex]
		
		$\sqrt{G}^{-1}(h)$ &$ \sqrt{M(h)}$  \\ [1 ex]
		
		$A$ & $\partial_x$, see \cref{derA}  \\ [1 ex]
		
		$A^T$ & $ -\partial_x $ \\ [1 ex]
		
		$\sqrt{N} \frac{dW_t}{dt}$ & $ \xi$. \\  [1 ex]
		
	\end{tabular}
\end{center}
For the last claim let $f_1(t), \dots, f_N(t)$ be compactly supported. A quick computation shows that
\begin{align}
    \mathbb{E}\pra*{\bra*{\int_0^{\infty} \frac{1}{N} f_{\al}(t) \sqrt{N} \frac{\dx W^{\al}}{\dx{t}} \dx{t}}^2} = \frac{1}{N} \mathbb{E}\pra*{\bra*{\int_0^{\infty} \frac{\dx}{\dx{t}} f_{\al}(t) W^{\al}_t \dx{t}}^2} = \frac{1}{N} \sum_{i = 1}^N \int_0^{\infty} f^2_i(t) \dx{t}.
\end{align}
Thus we obtain the following continuum analogs of \eqref{finaldstfematrix}:
\begin{center}\label{table}
	\begin{tabular}{c |  c } 
		
		discrete & continuum \\ [0.5ex] 
		\midrule
		$A^T G^{-1}(h) A A^T A h$ & $\partial_x \bra*{M(h) \partial_x^3 h}$   \\ [1 ex]
		
		%$\frac{1}{\beta} \sqrt{N} A^T \bar{D}\cdot g^{-1}(h)$ &  \\ [1 ex]
		
		$A^T \sqrt{G}^{-1}(h) \sqrt{\frac{2N}{\beta}} \frac{\dx W_t}{\dx{t}}$ & $\partial_x\bra*{\sqrt{M(h)} \sqrt{\frac{2}{\beta}} \xi}$.  \\ [1 ex]
		
	\end{tabular}
\end{center}
This confirms the form \eqref{stfe} of the SPDE. We will comment on the continuum form of the It\^o correction term~\eqref{finaldstfematrix} in~\eqref{itocorrection}.

\medskip

We will now turn our discussion to the entropy $S$. Recall that $s$ is chosen such that $s'' = \frac{1}{M}$. For $h \in \Delta_N$ we write $s(h) := \bra*{s(h^i)}^{i}$.
The choice of the metric tensor \eqref{grmetric} is based on the fact that it satisfies the crucial identity  
\begin{align}\label{gridentity}
    %g^{\al}_{\al'}(h) A^{\al'}_i G'(h^i) = A^{\al}_i h^i
    G^{-1}(h) A s'(h) = Ah
\end{align}
which is the discrete analog of
\begin{align}
    M(h) \partial_x s'(h) = \partial_x h.
\end{align}
By formally letting $\beta \to \infty$ in \eqref{finaldstfematrix}, we recover the \emph{Gr\"un--Rumpf} discretization of the deterministic thin-film equation 
\begin{align}\label{dtfegr}
        \frac{\dx}{\dx{t}} h_{t} = - A^T G^{-1}(h_t)AA^TAh_t.
\end{align}
In \cite{GruenRumpf} the authors have used the identity \eqref{gridentity} to show the following entropy estimate
\begin{prop}\cite[p.129, Lemma 5.1]{GruenRumpf}\label{entestgr}
    Let $h_t$ be a solution to \eqref{dtfegr}. We define the discrete entropy\footnote{Notice that the discrete entropy is the lumped version of \eqref{entropy}} via
    \begin{align}
        S(h) := \frac{1}{N} \sum_{i = 1}^N s(h^i)
    \end{align}
    where $s$ is chosen such that $s'' = \frac{1}{M}$. Then we have the identity
    \begin{align}
        \frac{\dx}{\dx{t}} S(h_t) = - \frac{1}{N} \sum_{i = 1}^N \bra*{\bra*{A^T A h_t}^i}^2.
        %= - \frac{1}{N} \sum_{i = 1}^N \bra*{\at^i_{\al} A^{\al}_j h^j}^2  \leq 0.
    \end{align}
\end{prop}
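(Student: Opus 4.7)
The plan is to differentiate $S(h_t)$ along the flow \eqref{dtfegr}, apply the discrete integration-by-parts identity given by the adjoint relation between $A$ and $A^T$, and invoke the crucial compatibility identity \eqref{gridentity} at the right moment. The whole argument is essentially a discrete transcription of the continuum calculation
\[
\frac{d}{dt}\int s(h)\,dx = \int s'(h)\,\partial_x(M(h)\partial_x^3 h)\,dx = -\int \partial_x s'(h)\, M(h)\,\partial_x^3 h\,dx = -\int (\partial_x^2 h)^2\, dx,
\]
where the last step uses $M(h)\partial_x s'(h)=\partial_x h$.

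First, I differentiate the discrete entropy: by the chain rule,
\[
\frac{d}{dt}S(h_t)=\frac{1}{N}\sum_{i=1}^N s'(h_t^i)\,\dot h_t^i = -\frac{1}{N}\sum_{i=1}^N s'(h_t^i)\,\bigl(A^T G^{-1}(h_t)AA^TAh_t\bigr)^i,
\]
using the evolution \eqref{dtfegr}. Next, I perform summation by parts, using that $(A^T)^i_\alpha = A^\alpha_i$ to move $A^T$ across the $\ell^2$-pairing (with the $\tfrac{1}{N}$ normalization consistent with lumping):
\[
\frac{d}{dt}S(h_t) = -\frac{1}{N}\sum_{\alpha=1}^N \bigl(A s'(h_t)\bigr)^\alpha\,\bigl(G^{-1}(h_t)AA^TAh_t\bigr)^\alpha.
\]

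Now comes the key step: since $G^{-1}(h_t)$ is diagonal (hence symmetric, see \eqref{grminverse}), I may transfer it onto $As'(h_t)$ and then apply the identity \eqref{gridentity}, namely $G^{-1}(h_t)As'(h_t) = Ah_t$. This yields
\[
\frac{d}{dt}S(h_t) = -\frac{1}{N}\sum_{\alpha=1}^N (Ah_t)^\alpha \bigl(AA^TAh_t\bigr)^\alpha.
\]
A second summation by parts to move the outer $A$ back onto $A^TAh_t$ gives
\[
\frac{d}{dt}S(h_t) = -\frac{1}{N}\sum_{i=1}^N \bigl((A^TAh_t)^i\bigr)^2,
\]
as claimed.

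The only delicate point is bookkeeping of the $\tfrac{1}{N}$ factors coming from the lumped $\ell^2$-inner product versus the factors of $N$ hidden in $A$ (cf.\ \eqref{forwarddif}), together with the fact that there are no boundary contributions in the summation by parts because of the assumed periodic data structure. Once one fixes the pairing conventions, the computation is purely algebraic; the substantive ingredient is the compatibility \eqref{gridentity}, which is precisely what forces the Grün--Rumpf choice of discrete mobility as the harmonic mean.
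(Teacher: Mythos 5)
The paper does not give its own proof of this proposition -- it is stated as a citation of \cite[p.\,129, Lemma 5.1]{GruenRumpf} -- so there is no in-paper argument to compare against. Your proof is correct and is precisely the natural discrete transcription of the continuum entropy calculation: differentiate $S$ along the flow \eqref{dtfegr}, use $(A^T)^i_\alpha = A^\alpha_i$ to perform the two summations by parts (which, with periodic data, produce no boundary terms and carry the $\tfrac1N$ factor through unchanged), use that $G^{-1}$ is diagonal to move it across the $\ell^2$-pairing, and invoke \eqref{gridentity} at the decisive step. One small remark: the ``delicate point'' you flag is in fact automatic here, since $A^T$ is the Euclidean matrix transpose of $A$ and the lumped inner product is just $\tfrac1N$ times the Euclidean one, so the $\tfrac1N$ prefactor simply rides along under both applications of $\sum_i f^i (A^T v)^i = \sum_\alpha (Af)^\alpha v^\alpha$; no cancellations between the explicit $\tfrac1N$ and the $N$'s inside $A$ are needed.
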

% For some $1 \leq m < \infty$ we make the following assumption on the mobility $M(h)$:
Recall that we are particularly interested in the case $M(h) = h^3$.
\begin{assumption}
    We assume that for some $0 \leq m < \infty$ we have
    \begin{align}
         L := \sup_{h \in \bra*{0, \infty}} \frac{M(h)}{h^m} < \infty.
         \label{mobilitymeq}
    \end{align}
    % \quad
    % \begin{enumerate} 

    %     \item $0 < \lim_{h \to 0} \frac{M'(h)}{h^{m-1}} < \infty$
    % \end{enumerate}
    \label{mobilitym}
\end{assumption}
From now on, for $m \geq 2$ we specifically set
\begin{align}
    s(h) := \int^{\infty}_h \int^{\infty}_{h'} \frac{1}{M(h'')} \dx h''.
\end{align}
Using Proposition \eqref{entestgr} it is easy to see that if the mobility satisfies assumption \eqref{mobilitym} the deterministic scheme preserves positivity for $m \geq 2$. 
\begin{rem}\label{distancetoboundary}
    In terms of the configuration space positivity means that the flow $h_t$ does not touch the boundary of the manifold $\mathcal{M}_N$ but stays in the open orthant $\set*{h > 0}$. In fact, one can show that the distance (induced by the metric tensor \eqref{grmetric}) between the boundary and the interior of $\mathcal{M}_N$ is finite if and only if $m < 3$, see \eqref{Nequaltwo} for the case of $N= 2$. Hence, by energy dissipation, any gradient flow with respect to the metric tensor \eqref{grmetric} preserves positivity for $m \geq 3$. In case of the Dirichlet energy as the energy functional the entropy estimate \eqref{entestgr} upgrades this threshold to $m \geq 2$. 
    
    On the other hand, it can be seen that the restriction of the metric tensor \eqref{mettens} to $T\mathcal{M}_N \otimes T\mathcal{M}_N$ induces a distance that is finite to the boundary iff $m < 5$.
\end{rem}
The main result in this paper transfers the entropy estimate \eqref{entestgr} to the stochastic setting.
\begin{thm}\label{stentest}
    Let $h_t$ be a solution to \eqref{finaldstfe} such that the initial condition $h_0$ satisfies $\mathbb{E}\pra*{S(h_0)} < \infty$ and the mobility $M$ satisfies \cref{mobilitym} for $m \geq 3$, then the following identity holds
        \begin{align}\label{enteq}
        \mathbb{E}\pra*{S(h_t)} + \int_{0}^t \mathbb{E}\pra*{\frac{1}{N} \sum_{i=1}^N \bra*{ \bra*{A^T A h_r}^i}^2} \dx{r} = \mathbb{E}\pra*{S(h_0)} + \frac{2N^3}{\beta} t.
    \end{align}
    % , then we have for all $t \geq 0$
    % \begin{align}\label{sentdiss}
    %     \mathbb{E}\pra*{S(h_t)} = \mathbb{E}\pra*{S(h_0)} - \int_{0}^t \mathbb{E}\pra*{\frac{1}{N} \sum_{i=1}^N \bra*{ \bra*{A^T A h_r}^i}^2} \dx{r} + \frac{2N^3}{\beta} t.
    % \end{align}
    % If, moreover, for $p \geq 1$ we have that $\mathbb{E}\pra*{\bra*{S(h_0)}^p}^{\frac{1}{p}} < \infty$ 
    Let $T > 0$. If, moreover, for $p < \infty$ we have that $\mathbb{E}\pra*{S^p(h_0)} < \infty$, then 
    \begin{align}\label{sentest2}
    \mathbb{E}\pra*{\bra*{\sup_{0 \leq r \leq T} S(h_r)}^p}^{\frac{1}{p}} \leq 
        \begin{cases}
             C \bra*{\bra*{\mathbb{E}\pra*{S^p(h_0)}^\frac{1}{p} + \frac{N^3 T}{\beta}}^{\frac{m-3}{m-2}} + \frac{N^{3 + \frac{1}{m-2}}T}{\beta}}^{\frac{m-2}{m-3}}\ &\mathrm{for} \ m > 3 \\
             C \bra*{ \mathbb{E}\pra*{S^p(h_0)}^{\frac{1}{p}}+1} e^{C \frac{N^4 T}{\beta} } \ &\mathrm{for} \ m =3
        \end{cases}
    \end{align}
    for some constant $C$ only depending on $p$, $m$ and $L$.
\end{thm}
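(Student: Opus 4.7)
I would prove \cref{stentest} by applying It\^o's formula to $S(h_t)$ along the SDE~\eqref{finaldstfematrix}, combined with a localization/Lyapunov argument controlling the distance to $\partial \Delta_N$. Since the drift and diffusion coefficients of~\eqref{finaldstfematrix} are smooth on the open orthant, standard SDE theory yields a unique strong solution up to the explosion time $\tau_\infty := \lim_{n\to\infty} \tau_n$, where $\tau_n := \inf\{t\ge 0 : \min_i h^i_t \le 1/n\}$. The entire argument then revolves around the pointwise identity
\begin{equation*}
  \mathcal{L} S(h) = -\tfrac{1}{N}\bigl\lVert A^T A h\bigr\rVert^2 + \tfrac{2 N^3}{\beta}, \qquad h \in \Delta_N,
\end{equation*}
which reduces \eqref{enteq} to Dynkin's formula and forces $\tau_\infty = \infty$ a.s.

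The key algebraic step is this identity. The main drift contribution to $\mathcal{L}S$ is
\begin{equation*}
  \tfrac{1}{N} s'(h)^T \cdot \bigl(-A^T G^{-1}(h) A A^T A h\bigr) = -\tfrac{1}{N}\bigl(G^{-1} A s'(h)\bigr)^T A A^T A h \stackrel{\eqref{gridentity}}{=} -\tfrac{1}{N}(Ah)^T A A^T A h = -\tfrac{1}{N}\lVert A^T A h\rVert^2,
\end{equation*}
reproducing the deterministic entropy dissipation of \cref{entestgr}. The contribution of the It\^o correction $\frac{N}{\beta} A^T \bar D \cdot G^{-1}(h)$ and the diffusion Hessian term $\frac{1}{\beta}\mathrm{Tr}[G^{-1}A \,\mathrm{diag}(M(h^i)^{-1})\,A^T]$ is more delicate. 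Working in the $\alpha$-coordinates, in which $G^{-1}$ is diagonal with $G_\alpha = (\dashint_{I_\alpha} M(h)^{-1}\,\dx x)^{-1}$, and using $\partial_\alpha h^{\alpha\pm} = \pm N^{3/2}$ together with $(As'(h))^\alpha = (Ah)^\alpha/G_\alpha$, a direct computation gives
\begin{equation*}
  \partial_\alpha G_\alpha \;=\; \tfrac{N^{3/2} G_\alpha}{h^{\alpha+}-h^{\alpha-}}\Bigl[\,2 - G_\alpha\bigl(M(h^{\alpha+})^{-1}+ M(h^{\alpha-})^{-1}\bigr)\Bigr].
\end{equation*}
The $G_\alpha(M^{-1}+M^{-1})$ piece cancels exactly against the diffusion Hessian term, and the constant $2$ summed over the $N$ indices $\alpha$ together with the prefactors produces precisely $2N^3/\beta$. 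This cancellation is the heart of the proof and is the main obstacle in the sense that it crucially uses both the GR identity~\eqref{gridentity} and the specific diagonal form of the metric in $\alpha$-coordinates.

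Granted the pointwise identity, the martingale part of the It\^o expansion stopped at $\tau_n$ is bounded (hence a true martingale), and taking expectation yields
\begin{equation*}
  \mathbb{E}[S(h_{t\wedge\tau_n})] + \mathbb{E}\!\int_0^{t\wedge\tau_n} \tfrac{1}{N}\lVert A^T A h_r\rVert^2\,\dx r = \mathbb{E}[S(h_0)] + \tfrac{2N^3}{\beta}\,\mathbb{E}[t\wedge\tau_n].
\end{equation*}
The right-hand side is uniformly bounded by $\mathbb{E}[S(h_0)]+\frac{2N^3}{\beta}t$. On $\{\tau_\infty \le t\}$ some $h^{i}_{\tau_n}$ converges to $0$, and since $m \ge 3 > 2$ implies $s(h)\gtrsim h^{2-m}\to \infty$ as $h\downarrow 0$, we have $S(h_{\tau_n})\to\infty$ on this event. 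Fatou then forces $\mathbb{P}(\tau_\infty \le t)=0$ for every $t$, so $\tau_\infty=\infty$ a.s., and \eqref{enteq} follows by monotone/dominated convergence.

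For the moment bound \eqref{sentest2}, write
\begin{equation*}
  S(h_t) \le S(h_0) + \tfrac{2N^3}{\beta}\,t + M_t,\qquad \langle M\rangle_t = \tfrac{2}{\beta N}\int_0^t \sum_\alpha \tfrac{((Ah_r)^\alpha)^2}{G_\alpha(h_r)}\,\dx r,
\end{equation*}
where the expression for $\langle M\rangle$ uses \eqref{gridentity} once more. Take $\sup_{r\le t}$ and apply Burkholder-Davis-Gundy to control $\mathbb{E}[\sup_{r\le t}|M_r|^p]$ by $\mathbb{E}[\langle M\rangle_t^{p/2}]$. The integrand in $\langle M\rangle$ is bounded in terms of $\max_i (h^i)^{-(m-2)}$ via \cref{mobilitymeq} and a Jensen-type estimate on $G_\alpha$, and the pointwise lower bound $h^i \gtrsim (NS(h))^{-1/(m-2)}$ implied by the blow-up of $s$ at $0$ allows us to dominate $\langle M\rangle_t$ by $\int_0^t (1+\sup_{r'\le r}S(h_{r'}))^{(m-1)/(m-2)}\,\dx r$. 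A Young inequality absorbs one power of $\sup S$ back into the left-hand side; for $m>3$ this yields the algebraic bound while for $m=3$ the scaling is critical and a Gronwall step produces the exponential bound.
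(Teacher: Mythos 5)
Your proof is correct and follows essentially the same strategy as the paper's: apply It\^o's formula to $S$ along the SDE with a localization, identify $\mathcal{L}S(h)=\frac{2N^3}{\beta}-\frac{1}{N}\lVert A^TAh\rVert^2$ via the Gr\"un--Rumpf identity \eqref{gridentity}, rule out touch-down by a Fatou/Feller-test argument, and obtain the sup-moment bound by BDG, Young's inequality, and a Gronwall-type integral inequality. The one place where you take a genuinely more labored route is the generator computation: you expand $\mathcal{L}$ into the pure-drift, It\^o-correction, and diffusion-Hessian pieces in $\alpha$-coordinates and verify by hand that the last two cancel except for a constant. The paper avoids this by keeping the divergence form intact — since $\partial_j S=\frac{1}{N}s'(h^j)$, the GR identity gives $(A^TG^{-1}A)^{ij}\partial_j S=\frac{1}{N}(A^TAh)^i$ at once, so $\frac{N}{\beta}\partial_i\big((A^TG^{-1}A)^{ij}\partial_j S\big)=\frac{1}{\beta}\partial_i(A^TAh)^i=\frac{1}{\beta}\mathrm{tr}(A^TA)=\frac{2N^3}{\beta}$ in a single step, with no need to differentiate $G^{-1}$ at all. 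Your explicit formula for $\partial_\alpha G_\alpha$ and the claimed cancellation are both correct (one can check them against the appendix formula \eqref{itocorcomp}), but the compact route is cleaner and is what makes the identity look inevitable rather than miraculous. The other cosmetic difference — localizing with $\tau_n=\inf\{t:\min_i h^i_t\le 1/n\}$ rather than the paper's $\tau_R=\inf\{t:S(h_t)>R\}$ — is immaterial since both produce compact subsets of $\mathcal M_N$ on which the coefficients extend Lipschitz-continuously and on which $S(h_{\tau})\to\infty$ along the stopping times.
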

\begin{proof}
    % and $\partial_{ij}Ent(h) = \frac{1}{N} G''(h^i) \delta_{ij}$.
    By assumption, the process $h_t$ satisfies the SDE
    \begin{align}
        \dx h_t = b(h_t) \dx t + \sigma(h_t) \dx W_t
    \end{align}
    where the drift $b$ and the diffusion matrix $\sigma$ are given according to \eqref{finaldstfematrix}. We set $\mathcal{M}_N^R := \set*{h \in \mathcal{M}_N : S(h) \leq R}$ for some $R$. Notice that thanks to $m \geq 2$ it holds that $h \in \mathcal{M}_N^R$ implies that $h$ is strictly bounded away from $0$. It is clear that there exist Lipschitz extensions $\bar{b}$ of $b$ and $\bar{\sigma}$ of $\sigma$ to all of $\R^{N+1}$ such that 
    \begin{align}\label{ext}
          \bar{b}|_{\mathcal{M}_N^R} = b|_{\mathcal{M}_N^R} \ \mathrm{and} \ \bar{\sigma}|_{\mathcal{M}_N^R} = \sigma|_{\mathcal{M}_N^R}
    \end{align}
    as well as a smooth extension $\bar{S}$ of the entropy $S$
    such that
    \begin{align}\label{ext2}
        \bar{S}|_{\mathcal{M}_N^R} = S|_{\mathcal{M}_N^R}.
    \end{align}
    Then we consider the process
    \begin{align}
        \dx \bar{h}_t = \bar{b}(\bar{h}_t) \dx t + \bar{\sigma}(\bar{h}_t) \dx W_t.
    \end{align}
    We apply It\^o's formula (cf. \cite[p.222, Theorem 3.3]{yorrevuz} and \cite[p.67, Lemma 3.2]{bookPavliotis}) to $\bar{S}(\bar{h}_t)$ which yields 
     \begin{align}\label{baritoform}
        \bar S(\bar{h}_{t}) = \bar S(h_0) + \int_0^{t }  \bar{\mathcal{L}} \, \bar S(\bar{h}_{s}) \dx{s} 
        + \sqrt{\frac{2N}{\beta}} \int_0^{t} \partial_i \bar S(\bar{h}_{s}) \bar{\sigma}^{i}_{\al}(\bar{h}_s) \dx W^{\al}_s
    \end{align}
    where $\bar{\mathcal{L}}$ denotes the generator of the process $\bar{h}_t$.
    Moreover, we define the stopping time
    \begin{align}
        \tau_R := \inf \set*{t \geq 0 : S(h_t) > R}.
    \end{align}
    %By \eqref{mobilitymeq} for $m \geq 2$ the condition $S(h_t) < R$ implies that $h_t$ is bounded away from the boundary $\partial \set*{h > 0}$.
    By definition, we have that $\bar{h}_t = h_t$ for $t \leq \tau_R$ and thus by \eqref{baritoform}, \eqref{ext} and \eqref{ext2} we get
    \begin{align}\label{itoform}
        S(h_{t \wedge \tau_R}) &= S(h_0) + \int_0^{t \wedge \tau_R}  \mathcal{L} S(h_{s}) \dx{s} \\
        &\quad+ \sqrt{\frac{2N}{\beta}} \int_0^{t \wedge \tau_R} \partial_i S(h_{s}) \bra*{A^T \sqrt{G}^{-1}(h_{s})}^{i}_{\al} \dx W^{\al}_s.
        % &= -\partial_i Ent(h_t) \bra*{A^T g(h_t) A A^T A h_t}^i \dx{t} \\
        % &+ \frac{1}{\beta} N\partial_i Ent(h_t) \bra*{ D \cdot \bra*{A^T g^{-1}(h_t) A}}^i \dx{t}  \\
        % &+\frac{1}{\beta} N \bra*{\sqrt{g}^{-1}(h_t) A D^2 Ent(h_t) A^T \sqrt{g}^{-1}(h_t)}^i_i  \dx{t} \\
        % &+ \sqrt{2\frac{1}{\beta} N} \partial_i Ent(h_t) \bra*{A^T \sqrt{g}^{-1}(h_t) \dx W_t}^i \\
        % &= \set*{I_1(h_t) + I_2(h_t) + I_3(h_t)} \dx t + I_4(h_t) \dx W_t.
        % &= - \frac{1}{N} \sum_{i = 1}^N G'(h^i_t) \bra*{A^T g^{-1}(h_t) A A^T A h_t}^i \dx{t} \\
        % &+ \frac{1}{\sqrt{N}} \frac{1}{\beta} \sum_{i=1}^N G'(h^i_t) \bra*{A^T \bar{D} \cdot g^{-1}(h_t)}^i \dx{t} \\
        % &+ \frac{1}{\beta} \sum_{i = 1}^N G''(h^i_t) \bra*{A^T g^{-1}(h_t) A}^i_i \dx{t} \\
        % &+ \sqrt{2\frac{1}{\beta} N^{-1}} \sum_{i = 1}^N G'(h^i_t) \bra*{A^T \sqrt{g}^{-1}(h_t)}^{i}_{\al'} \dx W^{\al'}_t \\
        % &= -\frac{1}{N} G'(h^l_t) \delta_{li} \at^i_{\al} g^{\al}_{\al'}(h_t) A^{\al'}_{j} \at^j_{\al''} A^{\al''}_{k} h^k_t \dx{t} \\ 
        % &+ \frac{1}{\beta} \frac{1}{\sqrt{N}} G'(h^l_t) \delta_{li} \at^{i}_{\al} \partial_{\al'} g^{\al \al'}(h_t) \dx{t} \\
        % &+ \frac{1}{\beta} \sum_{i = 1}^N G''(h^i_t) \at^{i}_{\al} g^{\al}_{\al'}(h_t) A^{\al'}_{i} \dx{t} \\
        % &+ \sqrt{2 \frac{1}{\beta} N^{-1}} G'(h^j_t) \delta_{ji}  \at^i_{\al} \sigma^{\al}_{\al'}(h_t) \dx W_t^{\al'} \\
    \end{align}
    % which by definition amounts to
    % \begin{align}
    %     S(h_{t \wedge \tau_R}) = S(h_0) + \int_0^{t } \mathcal{L}S(h_{s \wedge \tau_R}) \dx{s} + \sqrt{\frac{2N}{\beta}} \int_0^{t } D S(h_{s \wedge \tau_R}) \cdot A^T \sqrt{G}^{-1}(h_{s \wedge \tau_R}) \dx W_s.
    % \end{align}
    Here $\mathcal{L}$ denotes the generator of \eqref{finaldstfematrix}; according to \eqref{bke}, which we postprocess by \eqref{hattobar}, we have for any sufficiently nice function $f$ 
    \begin{align}
        \mathcal{L}f = N \frac{1}{\beta} \partial_i \bra*{\bra*{A^T G^{-1} A}^{ij} \partial_j f} - N \partial_i f \bra*{A^T G^{-1} A}^{ij} \partial_j E.
    \end{align}
    Then, we compute using \eqref{gradE}
    \begin{align}\label{genent}
        \mathcal{L} S(h) 
        &\stackrel{\eqref{gridentity}}{=} \frac{1}{\beta} \partial_i \bra*{ A^T A h }^i - \frac{1}{N} \sum_{i = 1}^N \bra*{\bra*{A^T A h}^i}^2 \\
        &\stackrel{\eqref{derA}}{=} \frac{2N^3}{\beta}  - \frac{1}{N} \sum_{i = 1}^N \bra*{\bra*{A^T A h}^i}^2.
    \end{align}

    We now consider the martingale in \eqref{itoform}
    \begin{align}
        X_t := \sqrt{\frac{2}{\beta N}}  \int_{0}^{t \wedge \tau_R} \sum_{i=1}^N s'(h^i_{s \wedge \tau_R}) \bra*{A^T \sqrt{G}^{-1}(h_{s \wedge \tau_R})}^{i}_{\al} \dx W_s^{\al}  \, ,
    \end{align}
    and note that, for $T > 0$ and $p < \infty$, the Burkholder--Davis--Gundy inequality (cf. \cite[p.161, Corollary 4.2]{yorrevuz}) yields
    \begin{align}
        \mathbb{E}\pra*{\sup_{0 \leq s \leq t} |X_s|^p}^\frac{1}{p}  \lesssim_p \mathbb{E}\pra*{\langle X_t \rangle^{\frac{p}{2}}}^{\frac{1}{p}} \, ,
    \end{align}
    where
    \begin{align}
        \langle X_t \rangle =  \frac{2}{\beta N}  \int_{0}^{t \wedge \tau_R} \sum_{i=1}^N s'\bra*{h^i_{s \wedge \tau_R}}  \bra*{A^T G^{-1}(h_{s \wedge \tau_R}) A}^i_j s'\bra*{h^{j}_{s \wedge \tau_R}} \dx s
    \end{align}
    is the quadratic variation of $X$. Here and from now on $\lesssim$ is equivalent to $\leq C$ for some universal constant $C$ that only depends on $p, m, L$. The integrand can be rewritten as follows
    \begin{align}
        \sum_{i=1}^N s'\bra*{h^i_{s \wedge \tau_R}}  \bra*{A^T G^{-1}(h_{s \wedge \tau_R}) A}^i_j s'\bra*{h^{j}_{s \wedge \tau_R}} 
        &\stackrel{\eqref{gridentity}}{=}    \sum_{\al=1}^N \bra*{Ah_{s \wedge \tau_R}}^{\al} \bra*{A s'(h_{s \wedge \tau_R})}^{\al} \\ 
        &= \sum_{i=1}^N \bra*{A^T Ah_{s \wedge \tau_R}}^{i} s'(h^i_{s \wedge \tau_R}). 
        % &=  2 \frac{1}{\beta} N^{-1} \frac{1}{1-m} \sum_{i=1}^N \bra*{A^T Ah_t}^{i} \bra*{\bra*{h^i_t}^{2-m} }^{\frac{m-1}{m-2}} \\
        % &\leq 2 \frac{1}{\beta} N^{4} C(m) Ent(h_t)^{2}
        % &= N^2 \frac{1}{m - 1} \sum_{\al = 1}^N (h^{\al+}_{t} - h^{\al-}_{t})\bra*{\bra*{h^{\al-}_{t}}^{1-m} - \bra*{h^{\al+}_{t}}^{1-m}} \\
        % &\leq N^2 \frac{1}{m - 1} \sum_{\al = 1}^N \bra*{ h^{\al+}_{t} \bra*{h^{\al-}_{t}}^{1-m} + h^{\al-}_{t} \bra*{h^{\al+}_{t}}^{1-m} } \\
        % &= N^2 \frac{1}{m - 1} \sum_{\al = 1}^N \bra*{ h^{\al+}_{t} \bra*{h^{\al-}_{t}}^{m-3} \bra*{h^{\al-}_{t}}^{2(2-m)} + h^{\al-}_{t} \bra*{h^{\al+}_{t}}^{m-3} \bra*{h^{\al+}_{t}}^{2(2-m)}} \\
        % &\leq (m-2)^2(m-1)^2 (Ent(h_t))^2 N^4 \frac{1}{m - 1} \sum_{\al = 1}^N \bra*{ h^{\al+}_{t} \bra*{h^{\al-}_{t}}^{m-3} + h^{\al-}_{t} \bra*{h^{\al+}_{t}}^{m-3}} \\
        % &\leq 2 (m-2)^2(m-1)^2 (Ent(h_t))^2 N^{m + 1} \frac{1}{m - 1}  \sum_{\al = 1}^N h^{\al-}_t \\
        % &= 2  N^{m + 2} (m-2)^2 (m-1) (Ent(h_t))^2
    \end{align}
    We estimate the second term in the above expression as 
    \begin{align}
        s'(h^i_{s \wedge \tau_R}) &\stackrel{\eqref{mobilitymeq}}{\lesssim} \bra*{h^i_{s \wedge \tau_R}}^{1-m} 
        \lesssim \bra*{ \sum_{j = 1}^N \bra*{h^j_{s \wedge \tau_R}}^{2-m} }^{\frac{m-1}{m-2}} 
        \stackrel{\eqref{mobilitymeq}}{\lesssim} N^{\frac{m-1}{m-2}} S^{\frac{m-1}{m-2}}(h_{s \wedge \tau_R})
    \end{align}
    and hence by conservation of mass we arrive at
    \begin{align}
         \sum_{i=1}^N s'\bra*{h^i_{s \wedge \tau_R}}  \bra*{A^T G^{-1}(h_{s \wedge \tau_R}) A}^i_j s'\bra*{h^{j}_{s \wedge \tau_R}} 
        &\lesssim N^{\frac{m-1}{m-2}}  S^{\frac{m-1}{m-2}}(h_{s \wedge \tau_R}) \sum_{i=1}^N \abs*{\bra*{A^T Ah_{s \wedge \tau_R}}^{i}} \\
        &\lesssim N^{\frac{m-1}{m-2} + 3} S^{\frac{m-1}{m-2}}(h_{s \wedge \tau_R}).
    \end{align}
    Looking at \eqref{itoform} and collecting all the estimates yields
    \begin{align}
        &\mathbb{E}\pra*{\bra*{\sup_{0 \leq s \leq t } S(h_{s \wedge \tau_R})}^p}^{\frac{1}{p}} \\ 
        &\lesssim \mathbb{E}\pra*{S^p(h_0)}^{\frac{1}{p}} 
        + \frac{N^3 t}{\beta} + \sqrt{\frac{N^{3 + \frac{1}{m-2}}}{\beta}} \mathbb{E}\pra*{\bra*{\int^{t \wedge \tau_R}_0 S^{\frac{m-1}{m-2}}(h_{s \wedge \tau _R}) \dx s}^{\frac{p}{2}}}^{\frac{1}{p}}
        \\  
        &\leq \mathbb{E}\pra*{S^p(h_0)}^{\frac{1}{p}} 
        + \frac{N^3 t}{\beta} + \sqrt{\frac{N^{{3 + \frac{1}{m-2}}}}{\beta}} \mathbb{E}\pra*{ \bra*{\sup_{0 \leq s \leq t} S(h_{s \wedge \tau_R}) \int^t_0 \sup_{0 \leq r \leq s} S^\frac{1}{m-2}(h_{r \wedge \tau_R}) \dx s}^{\frac{p}{2}} }^{\frac{1}{p}}.
    \end{align}
    Then, we use Young's inequality to the effect that
    \begin{align}
        &\mathbb{E}\pra*{\bra*{\sup_{0 \leq s \leq t } S(h_{s \wedge \tau_R})}^p}^{\frac{1}{p}} \\
        &\lesssim \mathbb{E}\pra*{S^p(h_0)}^{\frac{1}{p}} 
        + \frac{N^3 t}{\beta} + \frac{N^{{3 + \frac{1}{m-2}}}}{\beta} \mathbb{E}\pra*{\bra*{\int_0^t \sup_{0 \leq r \leq s} S^{\frac{1}{m-2}}(h_{r \wedge \tau_R}) \dx s}^p}^{\frac{1}{p}} \, .
    \end{align}
    Finally, by using Minkowski's and Jensen's inequalities, 
    % for some $\eps > 0$
    % \begin{align}
    %     &\mathbb{E}\pra*{\langle X_T \rangle^{\frac{1}{2}}} \lesssim \sqrt{2 \frac{1}{\beta} N^{4} } \mathbb{E}\pra*{\bra*{\sup_{0 \leq t \leq T} Ent(h_t) \int_{0}^T Ent(h_t)^{\frac{1}{m-2}} \dx{t}}^{\frac{1}{2}}}  \\
    %     &\lesssim \sqrt{2 \frac{1}{\beta} N^{4}  } \bra*{ \frac{\eps}{\sqrt{2}} \mathbb{E}\pra*{\sup_{0 \leq t \leq T} Ent(h_t)} + \frac{1}{\sqrt{2}\eps} \mathbb{E}\pra*{\int_{0}^T Ent(h_t)^{\frac{1}{m-2}} \dx{t}} } \\
    %     &\lesssim \sqrt{2 \frac{1}{\beta} N^{4} }  \bra*{ \frac{\eps}{\sqrt{2}} \mathbb{E}\pra*{\sup_{0 \leq t \leq T} Ent(h_t)} + \frac{1}{\sqrt{2}\eps} \mathbb{E}\pra*{\int_{0}^T \sup_{0 \leq r \leq t} Ent(h_r)^{\frac{1}{m-2}} \dx{t}} }.
    % \end{align}
    % Hence 
    we are left with
    \begin{align}\label{integralest}
        &\mathbb{E}\pra*{\bra*{\sup_{0 \leq s \leq t } S(h_{s \wedge \tau_R})}^p}^{\frac{1}{p}} \\
        &\lesssim \mathbb{E}\pra*{S^p(h_0)}^{\frac{1}{p}} 
        + \frac{N^3 t}{\beta} + \frac{N^{3 + \frac{1}{m-2}}}{\beta}   \int_{0}^t \mathbb{E}\pra*{ \bra*{\sup_{0 \leq r \leq s} S(h_{r \wedge \tau_R})}^p}^{\frac{1}{p(m-2)}} \dx{s}.
    \end{align}
    % and for $m=3$
    % \begin{align}
    %     \mathbb{E}\pra*{\bra*{\sup_{0 \leq t \leq T} S(h_{t \wedge \tau_R})}^p}^{\frac{1}{p}} &\lesssim  \mathbb{E}\pra*{S^p(h_0)}^\frac{1}{p} \\
    %     &+ \frac{ N^3 T}{\beta} + \frac{N^{4}}{\beta}   \int_{0}^T  \mathbb{E}\pra*{\bra*{\sup_{0 \leq r \leq t} S(h_{r \wedge \tau_R})}^p }^{\frac{1}{p}} \dx{t}.
    % \end{align}
    % by applying the Gronwall inequality and Fubini's theorem finally yields 
    By \eqref{lemintineq}, for $m=3$ the integral inequality \eqref{integralest} 
    % \begin{align}
    %     &\mathbb{E}\pra*{\bra*{\sup_{0 \leq t \leq T } S(h_{t \wedge \tau_R})}^p}^{\frac{1}{p}}  \\
    %     &\lesssim \mathbb{E}\pra*{S^p(h_0)}^{\frac{1}{p}} + \frac{N^3 T}{\beta}
    %      + \frac{N^4}{\beta} \int_{0}^T \mathbb{E}\pra*{ \bra*{\sup_{0 \leq r \leq t} S(h_{r \wedge \tau_R})}^p}^{\frac{1}{p}}  \dx{t}.
    % \end{align}
    yields
    \begin{align}\label{bound3}
         \mathbb{E}\pra*{\bra*{\sup_{0 \leq t \leq T } S(h_{t \wedge \tau_R})}^p}^{\frac{1}{p}} \lesssim  \bra*{\mathbb{E}\pra*{S^p(h_0)}^\frac{1}{p}+1} e^{ C \frac{ N^4  T}{\beta}}
    \end{align}
    for some constant $C$ depending on $m$ and $L$.
    On the other hand for $m > 3$, by \eqref{lemintineq} we get
        \begin{align}\label{boundbig3}
         \mathbb{E}\pra*{\bra*{\sup_{0 \leq t \leq T } S(h_{t \wedge \tau_R})}^p}^{\frac{1}{p}} \lesssim  \bra*{\bra*{\mathbb{E}\pra*{S^p(h_0)}^\frac{1}{p} + \frac{N^3 T}{\beta}}^{\frac{m-3}{m-2}} + \frac{N^{3 + \frac{1}{m-2}}T}{\beta}}^{\frac{m-2}{m-3}}.
        % \mathbb{E}\pra*{\sup_{0 \leq r \leq T} Ent(h_r)} \leq 2 \mathbb{E}\pra*{Ent(h_0)} e^{4 C (m-2)^2 (m-1) \frac{1}{\beta} N^{m+1}T}.
    \end{align}

    We now argue that in the proof the stopping time was not necessary.
    By Chebyshev's inequality, we have
    \begin{align}
        \mathbb{E}\pra*{\sup_{0 \leq t \leq T \wedge \tau_R} S(h_t)} \geq R ~ \mathbb{P}\bra*{\tau_R \leq T} 
    \end{align}
    and thus by envoking \eqref{bound3} respectively \eqref{boundbig3} we get
    \begin{align}
        R ~ \mathbb{P}\bra*{\tau_R \leq T} \lesssim
        \begin{cases}
            \bra*{\mathbb{E}\pra*{S(h_0)} + 1} e^{ C \frac{ N^4  T}{\beta}} \ &\mathrm{for} \ m = 3 \\
            \bra*{\bra*{\mathbb{E}\pra*{S^p(h_0)}^\frac{1}{p} + \frac{N^3 T}{\beta}}^{\frac{m-3}{m-2}} + \frac{N^{3 + \frac{1}{m-2}}T}{\beta}}^{\frac{m-2}{m-3}} \ &\mathrm{for} \ m > 3.
        \end{cases}
    \end{align}
    % and thus for $m>3$ this yields
    % \begin{align}
    %     R ~ \mathbb{P}\bra*{\tau_R \leq T}  \stackrel{\eqref{boundbig3}}{\lesssim}\bra*{\bra*{\mathbb{E}\pra*{S^p(h_0)}^\frac{1}{p} + \frac{N^3 T}{\beta}}^{\frac{m-3}{m-2}} + \frac{N^{3 + \frac{1}{m-2}}T}{\beta}}^{\frac{m-2}{m-3}}
    % \end{align}
    % and for $m = 3$ it yields
    % \begin{align}
    %     R ~ \mathbb{P}\bra*{\tau_R \leq T} \stackrel{\eqref{bound3}}{\lesssim}\bra*{\mathbb{E}\pra*{S(h_0)} + 1} e^{ C \frac{ N^4  T}{\beta}}.
    % \end{align}
    Hence we have in either case
    \begin{align}\label{stoptime}
        \lim_{R \to \infty} \mathbb{P}\bra*{\tau_R \leq T} = 0
    \end{align}
    and this proves the second assertion using Fatou's lemma. Finally, taking expectations in \eqref{itoform} and using \eqref{genent} together with \eqref{stoptime} gives the first assertion.
\end{proof}
As a direct consequence~\cref{stentest} yields
\begin{cor}
    Let $h_t$ be a solution to \eqref{finaldstfe} such that the mobility $M(h)$ satisfies \cref{mobilitym} for $m \geq 3$ and the initial datum satisfies $\mathbb{E}\pra*{S(h_0)} < \infty$. Then we have that
    \begin{align}
        \mathbb{P}\bra*{h > 0} = 1.
    \end{align}
\end{cor}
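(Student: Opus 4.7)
The plan is to deduce the corollary directly from the entropy moment bound \eqref{sentest2} established in \cref{stentest}. The key observation is that, under \cref{mobilitym} with $m \geq 3$, the discrete entropy density $s$ satisfies $s(h) \to \infty$ as $h \downarrow 0$. Indeed, from $s'' = 1/M$ and the bound $M(h)\leq L h^m$, we get $1/M(h'') \gtrsim h''^{-m}$, which is not integrable near $0$ for $m \geq 2$; performing the double integral in the definition $s(h) = \int_h^\infty\int_{h'}^\infty \frac{1}{M(h'')}\,dh''\,dh'$ confirms that $s(h)\to\infty$ as $h\downarrow 0$. Consequently, if any component $h^i_t$ vanishes, then $S(h_t) = +\infty$.

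First I would fix an arbitrary $T > 0$ and apply \eqref{sentest2} with, say, $p = 1$ (which is legitimate since $\mathbb{E}[S(h_0)]<\infty$): this yields $\mathbb{E}[\sup_{0\leq t\leq T} S(h_t)] < \infty$, hence $\sup_{0\leq t\leq T} S(h_t) < \infty$ almost surely. Combined with the blow-up property of $s$ at $0$, this forces $h^i_t > 0$ for every $i\in\{1,\dots,N\}$ and every $t\in[0,T]$ on an event of full probability. Alternatively, one can quote the statement already embedded in the proof of \cref{stentest}, namely $\lim_{R\to\infty}\mathbb{P}(\tau_R \leq T) = 0$ where $\tau_R := \inf\{t\geq 0 : S(h_t) > R\}$, to conclude $\tau_R \nearrow \infty$ almost surely, which is the same statement.

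Finally, taking the intersection over the countable family of events $\{h^i_t > 0 \text{ for all } t\in[0,n],\ i=1,\dots,N\}$ for $n\in\mathbb{N}$ gives $\mathbb{P}(h > 0) = 1$. The only step that is not purely cosmetic is verifying $s(0^+)=+\infty$ under \cref{mobilitym} with $m\geq 3$, but this is a two-line calculation using the upper bound on $M$. No new estimate beyond \cref{stentest} is needed, so there is no substantive obstacle.
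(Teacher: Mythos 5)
Your proof is correct and takes exactly the route the paper intends when it calls the corollary a ``direct consequence'' of \cref{stentest}: the moment bound \eqref{sentest2} (or equivalently the fact $\lim_{R\to\infty}\mathbb{P}(\tau_R\le T)=0$ from the proof) gives $\sup_{0\le t\le T}S(h_t)<\infty$ almost surely, while the bound $M(h)\le Lh^m$ with $m\ge 3>2$ forces $s(h)\gtrsim h^{2-m}\to\infty$ as $h\downarrow 0$, so finiteness of the entropy rules out touch-down on $[0,T]$; taking a countable union over $T=n$ completes the argument.
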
   
In particular, we do not have to impose the reflecting boundary condition for the SDE \eqref{finaldstfe} if $m \geq 3$.
The main selling point of our discretization is thus that we do not need to impose additional physics and/or rely on numerical tricks in the simulation in order to preserve positivity.
\begin{rem}
    Although~\cref{stentest} yields positivity for $m \geq 3$ for every fixed $N \in \N$ the bound on the entropy grows with $N$.
    % Assuming we could do better and find a constant $C > 0$ independent of $N$ such that we have
    % \begin{align}\label{falseest}
    %     \mathbb{E}\pra*{S(h_t)} = \mathbb{E}\pra*{S(h_0)} - \int_{0}^t \mathbb{E}\pra*{\frac{1}{N} \sum_{i=1}^N \bra*{ \bra*{A^T A h_r}^i}^2} \dx{r} + \frac{C}{\beta} t
    % \end{align}
    % and this yields the estimate
    % \begin{align}\label{falseest2}
    %     \int_{0}^t \mathbb{E}\pra*{  \frac{1}{N} \sum_{i = 1}^N \bra*{\bra*{A^T A h_r}^i}^2 } \dx{r} \leq \mathbb{E}\pra*{S(h_0)} + \frac{C}{\beta}t
    % \end{align}
    % and hence the dissipation is bounded uniformly in $N$.
    First of all, it is clear that \eqref{enteq} does not survive naively in the limit $N \to \infty$ since the term $\frac{2N^3t}{\beta}$ will blow up. On the other hand, one can rearrange terms in the following way
    \begin{align}\label{smarteq}
        \mathbb{E}\pra*{S(h_t)} - \mathbb{E}\pra*{S(h_0)}  =  \frac{2N^3}{\beta} t - \int_{0}^t \mathbb{E}\pra*{\frac{1}{N} \sum_{i=1}^N \bra*{ \bra*{A^T A h_r}^i}^2} \dx{r}.
    \end{align}
    The spatial increments of $h_r$ behave like Brownian motion and hence the dissipation term $\mathbb{E}\pra*{\frac{1}{N} \sum_{i=1}^N \bra*{ \bra*{A^T A h_r}^i}^2}$ scales like $N^3$ which shows that the scaling in $N$ on the right hand side of \eqref{enteq} is natural and it is not unreasonable to expect that the right hand side of \eqref{smarteq} converges for $N \to \infty$. On the other hand, at equilibrium the right hand side of \eqref{smarteq} does not depend on the mobility but for $m \geq 5$ the left hand side is not finite in the continuum limit and thus we do not expect an equality like $\eqref{enteq}$ to hold for $N \to \infty$.
\end{rem}
\begin{rem}\label{Nequaltwo}
    We present an argument that the ranges $m < 3$ and $m \geq 3$ are qualitatively very different. To this end, for $N=2$ we consider the associated Dirichlet form of the process, i.e. the right hand side of \eqref{dfpevar}, namely
    \begin{align}
        \int_{0}^2 \frac{1}{g(h)} \partial_h f(h) \partial_h \zeta(h) \dx \nu(h)
    \end{align}
    where (cf. \eqref{metriccomp})
    \begin{align}
        g(h) \sim h^{1-m}(2-h)^{1-m}.
    \end{align}
    We perform a change of variables $h \mapsto \hat{h}$ that is defined according to
    \begin{align}
        \frac{\dx \hat{h}}{\dx h} = \sqrt{g}(h)
    \end{align}
    and we note that this yields the transformation
    \begin{align}
        g^{-1}(h) \partial_h f(h) \partial_h \zeta(h) \rightarrow \partial_{\hat{h}} f(\hat{h}) \partial_{\hat{h}} \zeta(\hat{h}).
    \end{align}
    Then for $h \ll 1$ we have
    \begin{align}
        \hat{h} \sim
        \begin{cases}
            \frac{2}{3-m} h^{\frac{3-m}{2}} \ &\mathrm{for} \ m \neq 3 \\
            \ln h \ &\mathrm{for} \ m = 3.
        \end{cases}
    \end{align}
    % \begin{align}
    %     \hat{h} \sim \frac{2}{3-m} h^{\frac{3-m}{2}}
    % \end{align}
    % if $m \neq 3$
    % as well as 
    % \begin{align}
    %     \hat{h} \sim \ln h
    % \end{align}
    % if $m = 3$.
    For $2-h \ll 1$ this holds similarly with $2-h$ instead of $h$.
    Hence for $m < 3$ the configuration space for $\hat{h}$ is bounded and for $m \geq 3$ it is unbounded and therefore we do not need any boundary conditions. In fact, this heuristic is in the spirit of the Feller test (cf. \cite[p.348, Theorem 5.29]{KS88}) which also yields that the process touches the boundary of the configuration space for $m < 3$ and does not for $m \geq 3$. For this reason, the threshold $m=3$ in \cref{stentest} is sharp.
    % We want to present an argument that the assumption $m \geq 3$ in \eqref{stentest} is sharp. To this end, we will perform a Feller test in the case $N=2$. The SDE \eqref{finaldstfe2} for $N=2$ after projecting takes the form
    % \begin{align}\label{1dsde}
    %     \dx h_t = \bra*{- 16^2 g^{-1}(h_t) (h_t - 1) + \frac{1}{\beta} 8 \bra*{g^{-1}}'(h_t)} \dx{t} + 4 \sqrt{\frac{1}{\beta}} \sqrt{g}^{-1}(h_t) \dx W_t
    % \end{align}
    % where 
    % \begin{align}
    %     g^{-1}(h) = \bra*{\dashint_{I} \frac{1}{M(h)} \dx{x}}^{-1}
    % \end{align}
    % for $I = [0, \frac{1}{2}]$.
    % Following p. 347- 348 in \cite{KS88} we choose some $a > 0$ and compute the quantity $v$ (which does not depend on $a$) as
    % \begin{align}
    %     v(x) &= \int_{a}^x e^{32\beta \int_{a}^y (z'-1) \dx z'} g^{-1}(y) \int_{a}^{y} \frac{\beta}{8} e^{-32\beta \int_{a}^z (z'-1) \dx z'} \dx{z} \dx{y} \\
    %     &\lesssim \frac{\beta}{8} \int_{a}^x g^{-1}(y) (y-a) \dx{y}
    % \end{align}
    % and by appealing to \eqref{metriccomp} we see that
    % \begin{align}
    %     v(x) \lesssim \int_{a}^x y^{2 -m} \dx{y}.
    % \end{align}
    % The right hand side is integrable for $x = 0$ if and only if $m < 3$. Hence by the Feller test (cf. \cite[Thm. 5.29, p. 348]{KS88}) we see that for $m < 3$ the process touches $0$.
    \label{rem:Feller}
\end{rem}
\section{The central difference discretization}\label{sec:cdd}
In this section we recall the finite-difference discretization used in \cite{Pavliotis} and compare it to the \emph{Gr\"un--Rumpf} discretization in the last section. We will argue that the finite-difference discretization has ''touch--down'' for any mobility $M(h)$, i.e. there is some $i = 1, \dots, N$ and some $t \geq 0$ such that $h^i_t = 0$.
% Consider the SPDE
% \begin{align}
%     \partial_t h + \partial_x \bra*{M(h) \partial_x^3 h} =  \sqrt{\frac{2}{\beta}} \partial_x \bra*{\sqrt{M(h)} \xi}
%     \label{stfe}
% \end{align}
% where $\xi$ is Gaussian space-time white noise.
% Again choose $N \in \N$ and let 
% \begin{align}
%     \R^N \ni h_t := \bra*{h(t, x_1), \dots, h(t, x_N)}^T
% \end{align}
% where $\set*{x_i}_{i = 1, \dots, N}$ is an equidistant partition of the torus $\T$.
% For convenience we will now recall the the setup of \cite{Pavliotis}.

\medskip

By $C = \bra*{C^{j}_i}^{j}_i$ we denote the central difference matrix, i.e. we have for all vectors $\bra*{b^i}^i$
    \begin{align}
        C^{j}_i b^i = N \bra*{b^{j + 1} - b^{j -1}}.
    \end{align}
and, moreover, we let 
\begin{align}
     G(h) := \bra*{g_{\al \al'}(h)}_{\al \al'}, \quad g_{\al \al'}(h) :=  \frac{1}{M(h^{\al})} \delta_{\al \al'}.
\end{align}
Then the finite-difference discretization of the SPDE \eqref{stfe} is the following SDE (cf. \cite[p.591, (38)]{Pavliotis})
% Thus we end up with the discrete stochastic thin-film equation with respect to the central difference discretization of the metric tensor
\begin{align}\label{dSTFEnaive}
        \dx h_t = - C^T G^{-1}(h_t) C A^T A h_t \dx t +  C^T \sqrt{G}^{-1}(h_t) \sqrt{\frac{2N}{\beta} } \dx W_t 
\end{align}
which is supplemented with reflecting boundary conditions on $\partial\set*{h > 0}$ and where the matrix $A$ is given by \eqref{forwarddif}.
In \cite[p.591-593]{Pavliotis} the authors check that the SDE \eqref{dSTFEnaive} obeys the detailed balance condition which is largely due to the fact that 
\begin{align}\label{itocorcd}
    \sum_{j = 1}^N \partial_j \bra*{C^T G^{-1}(h) C}^i_j = 0
\end{align}
for all $h \in \R^N$.
The term on the left hand side of \eqref{itocorcd} is reminiscent of the It\^o--correction term emerging in \eqref{finaldstfematrix}.
In particular, the equation \eqref{dSTFEnaive} has the same invariant measure as \eqref{finaldstfematrix}; see also \cref{sec:num:inv} for further numerical evidence on this.

\medskip

We will now give an argument that the process $h_t$ defined by \eqref{dSTFEnaive} touches down.
The boundary $\partial \mathcal{M}_N$ can be decomposed into several sets of lower codimension. We call the sets of codimension 1 the faces of the simplex, i.e. the sets of the form $F^i_N := \bar{\mathcal{M}_N} \cap \set*{h^i = 0, h^j > 0, j \neq i}$ for $i = 1, \dots, N$. Obviously, the hyperplane containing $F^i_N$ is orthogonal to the unit vector $e_i$.
% The diffusion matrix can be computed as
% \begin{align}\label{difmat}
%     & \hspace{2.5em} C^T G^{-1}(h) C =\\ &N^2
%         \begin{pmatrix}
%             M(h^2) + M(h^N) & 0 & -M(h^2) & \cdots & - M(h^N) & 0 \\
%             0 & M(h^1) + M(h^3) & 0 & \cdots & 0 & -M(h^1)  \\
%             \vdots  & \vdots  & \vdots & \vdots & \ddots & \vdots  \\
%             0 & -M(h^1) & 0 & \cdots & 0 & M(h^{N-1}) + M(h^1) 
%     \end{pmatrix}
% \end{align}
% \begin{align}\label{difmat}
%     \bra*{C^T G^{-1}(h) C}_{ij} = N^2
%     \begin{cases}
%         M(h^{i-1}) + M(h^{i+1}), & i=j \\
%         -M(h^{j+1}), & j = i + 2 \\
%         -M(h^{j-1}), & j = i - 2 \\
%         -M(h^{i + 1}), & i = j + 2 \\
%         -M(h^{i-1}), & i = j - 2 \\
%         0, & \mathrm{else}.
%     \end{cases}
% \end{align}
Note that the quadratic variation of $h^i_t$ is given by $\int_0^t \bra*{C^T G^{-1}(h_t) C}_{ii} \dx{t}$.
Then we see that the matrix $C^T G^{-1} C$ does not degenerate in the direction orthogonal to the faces since
\begin{align}
    \bra*{C^TG^{-1}(h)C}_{ii} = N^2 \bra*{M(h^{i-1}) + M(h^{i+1})} > 0
\end{align}
for $h \in F^i_N$ and hence the quadratic variation stays positive even on $F^i_N$.
This suggests that this discretization of the stochastic thin-film equation indeed features touch-down and we also observe this phenomenon numerically, see~\cref{sec:numpositivity}. Notice that on the other hand in case of the \emph{Gr\"un--Rumpf} discretization, the corresponding diffusion matrix $C^T G^{-1}C$ does degenerate in the direction orthogonal to the faces. We provide a small schematic for $N=3$ in~\cref{fig:degeneracy} to demonstrate these features of the two discretizations. 

\begin{figure}
    \centering
    \begin{tikzpicture}

    \coordinate[] (A) at (0,0);

    \node at (A)[circle,fill,inner sep=2pt, color=red]{};
    
    \coordinate[] (B) at (3,0);

    \node at (B)[circle,fill,inner sep=2pt,color=red]{};
    
    \coordinate[] (C) at (1.5,2.59);

    \node at (C)[circle,fill,inner sep=2pt, color=red]{};

    % \coordinate (F) at (2.5,0);
    
    % \coordinate (E) at (-0.5,0);
    
     \draw[line width=0.25mm] (A) -- (B) -- (C) -- (A);
      \coordinate[label=below:(A)] (X) at (1.5,0);
     
    \coordinate[] (D) at (5,0);

    \node at (D)[circle,fill,inner sep=2pt, color=red]{};
    
    \coordinate[] (E) at (8,0);

    \node at (E)[circle,fill,inner sep=2pt,color=red]{};
    
    \coordinate[] (F) at (6.5,2.59);

    \node at (F)[circle,fill,inner sep=2pt, color=red]{};
    
    \draw[color=red,line width=0.25mm] (D) -- (E) -- (F) -- (D);
    
          \coordinate[label=below:(B)] (Y) at (6.5,0);
  
    \end{tikzpicture}
    \caption{The configuration space $\mathcal{M}_3$ for the two discretizations: central difference on the left (A) and \emph{Gr\"un--Rumpf} on the right (B). The edges and corners where the diffusion matrix degenerates are colored in red. As can be seen from the figure, the central difference discretization does not degenerate orthogonal to the  $d=1$ codimension subsets of $\mathcal{M}_3$,  while the \emph{Gr\"un--Rumpf} discretization degenerates on the whole boundary.}
    \label{fig:degeneracy}
\end{figure}
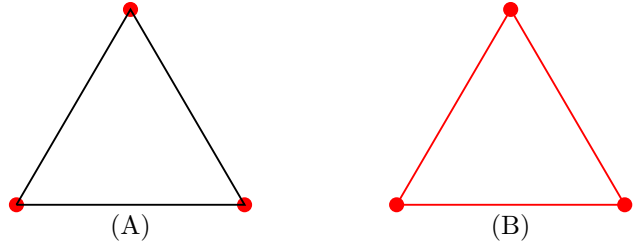
\begin{rem}[The It\^o-correction term]\label{itocorrection}
Consider the continuum stochastic thin-film equation in Stratonovich form with cut-off noise $\xi^N$ (i.e. cutting off at the $N$\textsuperscript{th} Fourier mode):
\begin{equation}
    \partial_t h = - \partial_x (M(h) \partial_x^3 h) + \sqrt{\frac{2}{\beta}}\partial_x(\sqrt{M(h)}\circ \xi^N) \, .
\end{equation}
It is fairly straightforward to check (cf.~\cite[Equation 2.5]{TN04}) that the same SPDE can be written down in It\^o form as follows
\begin{equation}
    \partial_t h = - \partial_x (M(h) \partial_x^3 h) + \frac{N}{8 \beta} \partial_x\bra*{\frac{(M'(h))^2}{M(h)}\partial_x h}+\sqrt{\frac{2}{\beta}}\partial_x(\sqrt{M(h)} \xi^N)  \, .
\end{equation}
The above situation closely mimics the one in our scenario: We have presented two spatial discretizations of the thin-film equation with thermal noise and they differ from each other by the correction term
\begin{equation}
    \frac{N}{\beta}A^T \bar{D} \cdot G^{-1}(h_t) \, .
\end{equation}
The reader can convince themselves, that as $N$ goes to $\infty$,  the above expression formally converges to 
\[
-\frac{N}{\beta} \partial_x\bra*{(M(h))^2\partial_x\bra*{\frac{M'(h)}{(M(h))^2} }}= \frac{N}{\beta} \partial_x\bra*{\bra*{2 \frac{(M'(h))^2}{M(h)}- M''(h) }\partial_x h}
\footnote{for the specific case of $m=3$ one can see this from the explicit form of the It\^o-correction term provided in~\eqref{itocnum}}.
\]
For the case of power law mobilities $M(h)=h^m$, one can check that the two correction terms are the same, up to a multiplicative constant. This observation is consistent with the finding of~\cite{HMW14} in which the authors discuss how different spatial discretizations of the stochastic Burgers equation can differ by terms which are analogous to the It\^o-to-Stratonovich correction for SDEs. It would not be unreasonable to expect that such a term plays a role in renormalization as a possible counter term.
    \label{rem:itocorrection}
\end{rem}
\section{Touch-down for the continuum system} \label{sec:ldp}
The open question of whether the deterministic thin-film equation with cubic mobility preserves positivity, is related to the degeneracy of the mobility when the film height approaches zero. In fact, in the case of high mobility exponent $m\ge  \frac{7}{2}$, it has been shown that indeed strict positivity is preserved (cf. \cite[p.194 , Theorem 4.1, (iii)]{Beretta}), while the opposite has been shown for $m < \frac{1}{2}$ in  \cite[p.198, Theorem 6.1]{Beretta}.

\medskip

In this section, we would like to discuss the same question (touch-down vs. positivity) for the continuum thin-film equation with thermal noise. We address this question through the associated large deviations rate functional of the continuum system. Before proceeding, we note that the entropic repulsion exhibited by the conservative Brownian excursion defined in~\cref{subsec:eqmeasure} is a purely energetic phenomenon. As such, it is independent of the degeneracy of the mobility and is thus orthogonal to the discussion of touch-down which will be presented in this section.  

\medskip

There is a well-known connection between the large deviation principle for a microscopic reversible Markov process and the (appropriate) gradient flow structure of its mean-field limit (cf. ~\cite{DG87,MPR14}). It is classical that for a reversible stochastic perturbation of a (finite-dimensional, but Riemannian) gradient flow, the rate functional $I$ is given in terms of the metric tensor $g$ and the energy function $E$ (see, for example,~\cite[Chapter 4, Section 3, Theorem 3.1]{FW84}): For a given time horizon $[0,T], T>0$, $I_T$ is the following functional on the space of all paths $[0,T]\ni t\mapsto h_t\in{\mathcal M}$ 
\begin{align}
I_T(h):=&\frac{1}{2}\int_0^Tg_{h_t}\left(\frac{\dx h_t}{\dx{t}}+\nabla E(h_t),
\frac{\dx h_t}{\dx{t}}+\nabla E(h_t)\right) \dx t  \\
=&\frac{1}{2}\int_0^Tg_{h_t}\left(\frac{\dx h_t}{\dx{t}},\frac{\dx h_t}{\dx{t}}\right) \dx t
+\frac{1}{2}\int_0^Tg_{h_t}\left(\nabla E(h_t),\nabla E(h_t)\right)\dx t \label{mn01} \\
&+E(h_T)-E(h_0).
\end{align}

\medskip

Formally,~\eqref{mn01} extends to infinite-dimensional situations like ours: While the SPDE might require a renormalization, the rate functional often does not (cf. ~\cite{HW15}) -- and can be analyzed rigorously (cf. ~\cite{KORVE07}). We take this route in order to give a heuristic argument that touch-down is generic for power-law mobilities\footnote{we consider power-law mobilities for convenience. One would expect the same result to hold with more general mobilities under the appropriate upper and lower bounds on the mobility.} $M(h)=h^m$ with mobility exponents $m<8$ and constitutes an extremely unlikely event for $m\ge 8$. To this end, we assume that the small-noise/high temperature large deviations rate functional $I_T$ for~\eqref{stfe} is given by~\eqref{mn01} with $g_{h_t}$ defined as in~\eqref{mettens}\footnote{in the sequel, for the sake of simplicity, we will consider the metric $g_h$ (and the equation) on $\R$. It can be defined in the natural way as in~\eqref{mettens}.}, $E$ given by the Dirichlet energy \eqref{energy}, and the gradient $\nabla E$ defined by duality as in~\eqref{gradient}. We first present our result for $m<8$, where we argue that touch-down is a generic phenomenon using an upper bound for the rate functional obtained via a self-similar ansatz.

\begin{prop}
Assume $M(h)=h^m$ for some $m<8$ and fix $T>0$. Then, there exists a curve $[-T,0] \ni t \mapsto  h_t \in \cM $ such that 
\begin{equation}
I_T( h)<\infty, \quad \min_{x \in \R}  h_{-T} >0, \quad \mathrm{and} \quad \min_{x \in \R}  h_0 =0 \, .
\end{equation}
\label{m<8}
\end{prop}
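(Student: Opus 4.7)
The plan is to construct a touch-down path by a self-similar ansatz and control the rate functional~\eqref{mn01} term by term via a scaling computation. Concretely, I would take
\[
h(t,x)=(-t)^\alpha f\!\bra*{\frac{x}{(-t)^\beta}}, \qquad t \in [-T,0),
\]
for exponents $\alpha,\beta>0$ and a smooth, strictly positive profile $f$ with $f(\xi) \sim C|\xi|^{\nu}$ as $|\xi|\to\infty$, where $\nu := \alpha/\beta$. The scaling is chosen so that $h(0, \cdot)$ has a nontrivial $t$-independent limit, namely $h(0,x)=C|x|^{\nu}$, which vanishes precisely at $x=0$, while $h(-T, \cdot)>0$ follows from $f>0$. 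Globally in $x$ the self-similar profile is matched to a fixed smooth positive background, contributing only bounded amounts to $I_T$.

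The heart of the argument is the scaling of the two quadratic terms in~\eqref{mn01}. With $\xi := x/(-t)^{\beta}$, the flux $j$ solving $\partial_x j + \partial_t h = 0$ has the form $j(t,x) = (-t)^{\alpha + \beta - 1} J(\xi)$ with $J(\xi) := (\alpha+\beta)\int_0^{\xi} f\dx{\eta} - \beta \xi f(\xi)$. The identity $\nu = \alpha/\beta$ forces an exact cancellation of the leading $|\xi|^{\nu+1}$ behavior of $J$, leaving a subleading (hence spatially integrable) profile, and a direct change of variables gives
\[
g_{h_t}(\partial_t h_t, \partial_t h_t) = (-t)^{(2-m)\alpha + 3\beta - 2} \int \frac{J^2}{f^m}\dx{\xi}, \qquad g_{h_t}(\nabla E(h_t), \nabla E(h_t)) = (-t)^{(m+2)\alpha - 5\beta} \int f^m (f''')^2 \dx{\xi}.
\]
Time integrability on $[-T, 0]$ is thus equivalent to $(2-m)\alpha + 3\beta > 1$ and $(m+2)\alpha - 5\beta > -1$; multiplying the first by $5$, the second by $3$, and adding collapses these into the single condition $(8-m)\alpha > 1$, which admits positive $(\alpha, \beta)$ precisely when $m<8$.

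It remains to ensure finiteness of both spatial integrals at $|\xi|\to\infty$ and of $E(h_0)$. Since $f(\xi)\sim|\xi|^\nu$, the integrand $f^m(f''')^2$ decays like $|\xi|^{(m+2)\nu - 6}$, forcing $\nu < 5/(m+2)$; meanwhile $E(h_0) \lesssim \int |x|^{2\nu - 2}\dx{x}$ near $x=0$ is finite iff $\nu > 1/2$. The interval $\nu \in (1/2, 5/(m+2))$ is nonempty precisely when $m<8$, and, setting $\alpha = \nu\beta$, the two time-integrability inequalities reduce to the window $\beta \in \bra*{\frac{1}{(2-m)\nu + 3},\,\frac{1}{5 - (m+2)\nu}}$, which is nonempty by $\nu > 1/2$. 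Taking, for instance, $f(\xi):=(1+\xi^2)^{\nu/2}$ yields an explicit admissible profile. The main technical obstacle is the compatibility of the self-similar profile with a globally admissible background; since $\partial_t h$ is localized in $|x|\lesssim(-t)^{\beta}$ and the exterior contributes only $O(1)$ to $I_T$, this matching is handled by a standard spatial cut-off argument.
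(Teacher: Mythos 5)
Your proof is essentially the paper's: the same self-similar ansatz (your $\alpha,\beta,\nu$ correspond to $\eta\gamma,\eta,\gamma$ in the paper's notation), the same flux formula $J = (\alpha+\beta)\int_0^\xi f - \beta\xi f(\xi)$, and the same scaling analysis producing the threshold $m<8$. The one slip is the parenthetical ``(hence spatially integrable)'': the cancellation only leaves $J$ bounded for $\nu<1$, so $J^2/f^m\sim|\xi|^{-m\nu}$ at infinity still requires $\nu>1/m$ — a condition the paper records and which is subsumed by $\nu>1/2$ only when $m\geq 2$; the constraint set should therefore be $\nu\in(\max(1/2,1/m),\min(1,5/(m+2)))$, still nonempty for $1<m<8$, so the conclusion stands.
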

\begin{proof}
For the sake of convenience, we present the proof only for the range $1<m<8$.    For any curve $[-T,0]\ni t \mapsto h_t \in \cM$, we can write the rate functional as follows
\begin{align}
I_T(h)  =& \frac{1}{2}\int_{-T}^0 g_{h_t}(\partial_t h_t, \partial_t h_t) \dx t
+\frac{1}{2}\int_{-T}^0 g_{h_t}\bra*{\nabla E(h_t), \nabla E(h_t)} \dx t \\
&+E(h_0)-E(h_{-T}) \, .
\label{ratefunction2}
\end{align}
Note that we can apply Cauchy--Schwarz and Young's inequality to obtain the bound
\begin{align}
    &\abs*{E(h_0)-E(h_{-T})} = \abs*{\int_{-T}^0 g_{h_t}\bra*{\partial_t h_t,\nabla E(h_t)} \dx{t}}  \\
    \leq & \frac{1}{2}\int_{-T}^0 g_{h_t}(\partial_t h_t, \partial_t h_t) \dx t +\frac{1}{2}\int_{-T}^0 g_{h_t}\bra*{\nabla E(h_t), \nabla E(h_t)} \dx t \, .
    \label{eq:energyI}
\end{align}
This leaves us with
\begin{equation}
I_T(h)  \leq \int_{-T}^0 g_{h_t}(\partial_t h_t, \partial_t h_t) \dx t +\int_{-T}^0 g_{h_t}\bra*{\nabla E(h_t), \nabla E(h_t)} \dx t.
\end{equation}
We now consider the following self-similar ansatz
\[
 h_t(x)=(-t)^{\eta\gamma}\hat{h}(x(-t)^{-\eta}),\quad\hat{h}(\hat x)=(\hat x^{2}+1)^{\frac{\gamma}{2}},
\]
with $\eta>0$ and $0<\gamma<1$. Then,
\[
\lim_{t\uparrow 0}h_t(x)=|x|^{\gamma}.
\]
We thus have that
\begin{align}
h_t(x) & =(x^{2}+(-t)^{2\eta})^{\frac{\gamma}{2}}\, .
\end{align}
Note now that, from the definition of the metric tensor \eqref{mettens},
\begin{align}
 \int_{-T}^{0} g_{h_t}(\partial_t h_t,\partial_t h_t) \dx{t} = \int_{-T}^0 \int_{\R}\frac{j_t^2}{ h_t^m}  \dx{x} \dx t \, ,
\end{align}
where $ j=j_t$ is a time-dependent flux field satisfying 
 \begin{align}
 \partial_t  h_t + \partial_x  j_t=0 \, .
 \end{align}
 It turns out that $ j_t$ also has a simple structure in self-similar variables. Indeed, it can be written as
 \begin{equation}
  j_t(x) = (-t)^{\eta \gamma +\eta-1}\hat{j}(x(-t)^{-\eta}) \, ,
 \end{equation}
 where
 \begin{equation}
\hat{j}(\hat x)= -\eta \gamma \int_0^{\hat x} (y^2 +1)^{\frac{\gamma}{2}-1} \dx{y} \, .
 \end{equation}

 We then have that
\begin{align}
\int_{-T}^{0}g_{h_t}(\partial_t h_t,\partial_t h_t) \dx t= & \int_{-T}^{0} (-t)^{\eta \gamma (2-m ) +2\eta -2}\int_{\R} \frac{\hat j^2(x(-t)^{-\eta})}{\hat h^m(x(-t)^{-\eta})} \dx{x}\dx{t}\\
=& \int_{-T}^{0} (-t)^{\eta \gamma (2-m )  + 3\eta-2}\int_{\R} \frac{\hat j^2(\hat x)}{\hat h^m(\hat x)} \dx{\hat x}\dx{t} \, .
% = & \int_{0}^{T}\int_{\R}\frac{[\eta\gamma(|x|^{2}+t^{2\eta})^{\frac{\gamma}{2}-1}t^{2\eta-1}]^{2}}{(|x|^{2}+t^{2\eta})^{\frac{m\gamma}{2}}}\\
% = & (\eta\gamma)^{2}\int_{0}^{T}t^{2(2\eta-1)}\int_{\R}\frac{[(|x|^{2}+t^{2\eta})^{\frac{\gamma}{2}-1}]^{2}}{(|x|^{2}+t^{2\eta})^{\frac{m\gamma}{2}}}\\
% = & (\eta\gamma)^{2}\int_{0}^{T}t^{2(2\eta-1)-m\gamma\eta+2\eta(\gamma-2)+2\eta}\int_{\R}\frac{[(|x|^{2}+1)^{\frac{\gamma}{2}-1}]^{2}(xt^{-\eta})}{(|t^{-\eta}x|^{2}+1)^{\frac{m\gamma}{2}}}\\
% = & (\eta\gamma)^{2}\int_{0}^{T}t^{2(2\eta-1)-m\gamma\eta+2\eta(\gamma-2)+3\eta}\int_{\R}\frac{[(|x|^{2}+1)^{\frac{\gamma}{2}-1}]^{2}(x)}{(|x|^{2}+1)^{\frac{m\gamma}{2}}}
\end{align}
For the integrability of the time-dependent term in the integrand we require that
\begin{align}
\eta \gamma (2-m )  + 3\eta >1 \, .
\label{tt1}
\end{align}
On the other hand, for the space-dependent term in the integrand we note that  $|\hat j|(\hat x) \lesssim 1+ (\hat x^2 +1)^{\frac{\gamma-1}{2}} $ and $\hat h(\hat x) = (\hat x^2 +1)^{\frac{\gamma}{2}} $. It follows that for the integrability of this term it is sufficient to have
\begin{align}
-m\gamma < -1 \, .
\label{st1}
\end{align}
We now turn our attention to the second term in~\eqref{ratefunction2}. We compute
\begin{equation}
\partial_x^3  h_t= (-t)^{\eta(\gamma-3)}\hat h'''(x(-t)^{-\eta}) \, .
\end{equation}
Using the definition of the metric tensor~\eqref{mettens} and of the gradient $\nabla E$~\eqref{gradient}, we obtain
\begin{align}
\int_{-T}^{0} g_{h_t}(\nabla E(h_t),\nabla E(h_t)) \dx{t} = &\int_{-T}^{0}\int_{\R} h_t^{m}(\partial_{x}^3  h_t)^{2} \dx{x}\dx{t}\\
=& \int_{-T}^0 (-t)^{\eta \gamma (m+2)- 6 \eta } \int_{\R} \hat h^{m}(x(-t)^{-\eta}) (\hat h'''(x(-t)^{-\eta}))^2\dx{x} \dx{t} \\
=& \int_{-T}^0 (-t)^{\eta \gamma (m+2)- 5 \eta } \int_{\R} \hat h^{m}(\hat x) (\hat h'''(\hat x))^2\dx{\hat x} \dx{t} \, .
\end{align}
For the integrability of the time-dependent term in the above expression, it is sufficient to have
\begin{equation}
\eta(\gamma(m+2)-5)>-1 \, .
\label{tt2}
\end{equation}
On the other hand, note that $\bra*{\hat{h}^{m}(\hat h''')^2}(\hat x) \lesssim (\hat x^2 +1)^{\frac{ m\gamma}{2} +\gamma -3}$. Thus, for the integrability of the space-dependent term we require
\begin{equation}
m \gamma + 2 \gamma -6 <-1 \, .
\label{st2}
\end{equation}
We first note that~\eqref{st1} can be reduced to
\begin{equation}
\frac{1}{m}<\gamma <1  \, ,
\end{equation}
if $1<m<8$. On the other hand,~\eqref{st2} is equivalent to the following condition 
\begin{equation}
\gamma < \frac{5}{2 +m} \, .
\label{st3}
\end{equation}
The remaining conditions~\eqref{tt1} and~\eqref{tt2} can be reformulated as
\begin{equation}\label{eta}
3-\gamma(m-2)  >\frac{1}{\eta}>5-\gamma(m+2) \, .
\end{equation}
Note that if~\eqref{st3} is satisfied then $5-\gamma(m+2)$ is always larger than $0$. On the other hand, $3-\gamma(m-2) > 5-\gamma(m+2) $ if and only if $\gamma>1/2$ . Thus, we can choose $\gamma$ such that
\begin{align}
\max\left(\frac{1}{2},\frac{1}{m}\right)< \gamma < \min\left( 1, \frac{5}{2+m}\right) \, ,
\end{align}
for all $1<m<8$.  We can then choose $\eta>0$ so that \eqref{eta} is satisfied. Thus, for these choices of $\eta$ and $h$ we have $I_T(h) < \infty$, and the result follows.
% Now we let $\bar h$ be the time reversal of ${h}$, i.e. $\bar h_t={h}_{T-t}$. From our previous calculations, it is then clear that
% \begin{align}
% I_T(\bar h)  < &\infty \, ,
% \end{align}
% and that
% \[
% \bar{h}_0= h_T=T^{2\eta \gamma}\hat{h}( x T^{-\eta}),\quad\lim_{t\uparrow T}\bar h_t=|x|^{\gamma}.
% \]
% Consequently, starting from the positive initial state $\bar h_0$, the probability for a solution of~\eqref{stfe} to reach a neighborhood of the state $\bar h_T=|x|^\gamma$ (which is $0$ at $x=0$) can be estimated by the large deviations lower bound: for all $\delta>0$, there is an $\beta_{0}=\beta_{0}(\delta)$ so that, for all $\beta<\beta_{0}$,
% \[
% \mathbb{P}(\| h_T-|x|^{\gamma}\|_{\Leb^\infty(\R)}<\delta)\geq C \exp(-\beta I(\bar{h}))>0 \, ,
% \]
% for some constant $C>0$ independent of $\beta>0$.
\end{proof}
% \begin{rem}
% The above result tells us that with positive probability the solution $h$ to the stochastic thin-film equation~\eqref{stfe} with initial profile $h_0$ reaches values arbitrarily close to zero. Hence, microscopically small scales are expected to be reached with positive probability. 
% %We also note that the choice of initial datum $h_0$ in the statement of~\cref{m<8} is more out of convenience than necessity. One would expect the result to hold true for all positive initial data by constructing appropriate paths (along which the rate functional is finite) between the initial datum and $h_0$.
% \end{rem}
We now turn to the case $m \geq 8$ where we argue that touch-down is an extremely rare event by obtaining an ansatz-free diverging (as $h \to 0$) lower bound for the rate functional. For simplicity, we restrict ourselves to paths $[0,T]\ni t \mapsto h_t $ that start at $h_0 \equiv 1$.
\begin{prop}
Assume $M(h)=h^m$ for some $m \geq 8$.  Then, for any path $[0,T]\ni t \mapsto h_t \in \cM $ starting from $h_0\equiv 1$, the rate function $I_T$ diverges in the following quantitative sense
\begin{align}\label{mn10}
T^\frac{1}{4}I_T(h)\gtrsim
\begin{cases}
\sup_{x \in \R}\left(\ln\frac{1}{h_T}-1+h_T\right)_+  & m=8\\
\sup_{x \in \R}\left(\frac{1}{h_T^{\frac{m}{8}-1}}-1\right)_+^2 & m>8 
\end{cases}
\, ,
\end{align}
as $\inf_{x \in \R}h_T \to 0$\footnote{although we present the result for $\R$ an essentially identical argument should also work for the torus} where the implicit constant in $\gtrsim$ depends only on $m$.
\label{m>=8}
\end{prop}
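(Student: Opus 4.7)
My plan is to combine a chain-rule/Cauchy--Schwarz estimate for a well-chosen $\alpha$-entropy with a one-dimensional Sobolev embedding that converts the resulting integrated bound into the pointwise bound of the statement.

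I would introduce the $\alpha$-entropy $S_\alpha(h) := \int_{\R} s_\alpha(h)\dx{x}$ with $s_\alpha''(h) = h^{-\alpha}$ and $s_\alpha(1) = s_\alpha'(1) = 0$, and take $\alpha = m/4$. This choice is natural because $s_\alpha(h)\asymp (h^{1-m/8}-1)^2$ (with an $m$-dependent constant) as $h\downarrow 0$, so that $S_\alpha(h_T)$ is a good integrated proxy for the $L^\infty$ quantity on the right-hand side of~\eqref{mn10}. The borderline case $m=8$ corresponds to $\alpha=2$, giving $s_2(h) = h-1-\ln h$ exactly. The chain rule $\tfrac{\dx}{\dx{t}}S_\alpha(h_t) = g_{h_t}(\nabla S_\alpha, \partial_t h_t)$, Cauchy--Schwarz in $g$ and in $t$, together with the energy identity $I_T = \tfrac12\int g(\partial_t h,\partial_t h)\dx{t}+\tfrac12\int g(\nabla E,\nabla E)\dx{t}+E(h_T)-E(h_0)$ and $E(h_0)=0\leq E(h_T)$ -- which yields $\int_0^T g(\partial_t h,\partial_t h)\dx{t}\leq 2 I_T$ -- give the master inequality
\begin{equation*}
S_\alpha(h_T)^2 \;\leq\; 2\,I_T\cdot \int_0^T g_{h_t}(\nabla S_\alpha,\nabla S_\alpha)\dx{t},
\end{equation*}
using $S_\alpha(h_0)=0$ from $h_0\equiv 1$.

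A direct computation gives $g_h(\nabla S_\alpha,\nabla S_\alpha) = \int h^{m-2\alpha}(\partial_x h)^2\dx{x}= \int h^{m/2}(\partial_x h)^2\dx{x}$ for $\alpha=m/4$, which I would bound pointwise in time by $\|h_t\|_{L^\infty}^{m/2}\cdot 2E(h_t)$. Using the energy identity a second time to obtain $\sup_{t\in[0,T]}E(h_t)\lesssim I_T$, together with the 1D Sobolev interpolation $\|h_t-1\|_{L^\infty}^2\lesssim \|h_t-1\|_{L^2}\|\partial_x h_t\|_{L^2}$ to bound $\|h_t\|_{L^\infty}$ (with $\|h_t-1\|_{L^2}$ controlled through a parallel master inequality using $\alpha=2$), one obtains $\int_0^T g(\nabla S_\alpha,\nabla S_\alpha)\dx{t}\lesssim T\cdot I_T$, up to polynomial factors in $I_T$ that can be absorbed in the implicit constant in the regime where $X := \sup_x(h_T^{1-m/8}-1)_+^2$ is large. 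Thus $S_\alpha(h_T) \lesssim \sqrt{T}\,I_T$.

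Finally, the 1D Sobolev embedding $\|F\|_{L^\infty}^2\leq 2\|F\|_{L^2}\|\partial_x F\|_{L^2}$ applied to $F := h_T^{1-m/8}-1$ (which decays at spatial infinity since $h_T\to 1$ there), combined with the pointwise comparison $s_{m/4}(h)\gtrsim (h^{1-m/8}-1)^2$ (giving $\|F\|_{L^2}^2\lesssim S_{m/4}(h_T)$) and an analogous bound for $\|\partial_x F\|_{L^2}^2 = (1-m/8)^2\int h_T^{-m/4}(\partial_x h_T)^2\dx{x}$ -- obtained via integration by parts against $\partial_x^2 h_T$ together with the dissipation bound $\int_0^T\|\partial_x^2 h_t\|_{L^2}^2\dx{t}\lesssim I_T$ (from the master inequality applied to the standard $m$-entropy) and a pointwise-in-$t$ interpolation -- yields $T^{1/4}I_T\gtrsim\sup_x(h_T^{1-m/8}-1)_+^2$, as desired. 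The case $m=8$ is borderline: the $\alpha$-entropy degenerates logarithmically, and the Sobolev step is replaced by a direct pointwise argument based on $\partial_x s_2(h) = (1-h^{-1})\partial_x h$, which produces the linear (rather than quadratic) dependence in the statement. The main technical obstacle is the self-consistent $L^\infty$ control of $h_t$ in the second step: the weight $h^{m/2}$ in $g(\nabla S_{m/4},\nabla S_{m/4})$ is not directly controlled by $I_T$, so the bookkeeping required to absorb the polynomial corrections in $I_T$ into the implicit constant while preserving the clean $T^{1/4}$ scaling is the central combinatorial difficulty.
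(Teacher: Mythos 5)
Your overall strategy---monitor an $\alpha$-entropy $S_\alpha(h_t) = \int s_\alpha(h_t)\dx{x}$ with $s_\alpha'' = h^{-\alpha}$, apply the chain rule, and use Cauchy--Schwarz against the rate functional---is the same as the paper's, but your choice $\alpha = m/4$ is the wrong one, and the machinery you introduce to compensate does not close. The paper chooses $\alpha = m/2$, for the structural reason that this exponent is the square root of the dissipation weight: with $s'' = h^{-m/2}$, the metric gradient norm becomes $g_h(\nabla S, \nabla S) = \int M(h)(s''(h))^2(\partial_x h)^2\dx{x} = \int(\partial_x h)^2\dx{x} = 2E(h)$, which is directly bounded by $I_T$ via $E(h_t) \le I_T + E(h_0)$. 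The Cauchy--Schwarz in the chain rule then splits cleanly into the dissipation term and the Dirichlet energy with no leftover weight, giving $T^{-1/2}\int s(h_T)\dx{x} + E(h_T) \lesssim I_T$; the ``deficit'' between the scale $h^{2-m/2}$ of $s$ and the desired scale $h^{2-m/4}$ is recovered not by a bootstrap but by one more Cauchy--Schwarz in $x$, pairing $\int s(h_T)\dx{x}$ against $\int(\partial_x h_T)^2\dx{x}$, followed by the fundamental theorem of calculus (exploiting that $h_T=1$ somewhere).

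Your choice $\alpha = m/4$ gives $g_h(\nabla S_{m/4}, \nabla S_{m/4}) = \int h^{m/2}(\partial_x h)^2\dx{x}$, whose $h^{m/2}$ weight is uncontrolled without a priori $L^\infty$ control of $h_t$---the bootstrap you acknowledge as the ``central combinatorial difficulty.'' Even the $T$-scaling is compromised: Morrey only gives $\|h_t\|_{L^\infty}^{m/2} \lesssim (1 + I_T^{1/2})^{m/2}$, which introduces uncontrolled powers of $I_T$ that cannot be absorbed while preserving the linear dependence on $I_T$ in~\eqref{mn10}. The final Sobolev step is worse: it requires a bound on $\|\partial_x F\|_{L^2}^2 \sim \int h_T^{-m/4}(\partial_x h_T)^2\dx{x}$, a negatively-weighted Dirichlet integral that typically \emph{diverges} as $\inf h_T \to 0$---e.g. for a touchdown profile $h_T \sim |x|^\gamma$ the integrand behaves like $|x|^{\gamma(2-m/4)-2}$, never integrable at $x=0$ for $m\ge 8$ and $\gamma>0$. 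This is precisely the regime the proposition addresses, so this term cannot be controlled. The claimed source, a dissipation bound $\int_0^T \|\partial_x^2 h_t\|_{L^2}^2\dx{t} \lesssim I_T$ via the ``standard $m$-entropy'' ($s'' = 1/M$), is also unjustified: the corresponding gradient norm is $\int h^{-m}(\partial_x h)^2\dx{x}$, again singular and uncontrolled for a general curve with finite rate; the clean identity $\frac{\dx}{\dx{t}}S = -\int(\partial_x^2 h)^2\dx{x}$ holds only along the gradient flow itself. In short, the single change $\alpha = m/4 \to m/2$ removes the need for all the auxiliary interpolation and makes the argument close.
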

\begin{proof}
We note first that the second identity in~\eqref{mn01} yields the following inequality
\begin{align}\label{mn04}
E(h_t)\le I_T(h)+E(h_0) \, ,
\end{align}
for all $t \in [0,T]$. Note that in view of~\eqref{mettens} we learn from~\eqref{mn01} that there exists
a time-dependent flux field $j=j_t(x)$ satisfying the continuity equation
\begin{align}\label{mn02}
\partial_th_t+\partial_xj_t=0 \, ,
\end{align}
such that the dissipation is controlled as 
\begin{align}\label{mn05}
\frac{1}{2}\int_0^T\int_{\R}\frac{j_t^2}{h_t^m}  \dx{x} \dx{t} \le I_T(h)+E(h_0) \, .
\end{align}
We again monitor some ``\emph{entropy}'' $\int_{\R} s(h_t) \dx{x}$ along the path, 
where $s=s(h)$ is now defined via
\begin{align}\label{mn03}
\begin{array}{c}
s(1)=s'(1)=0,\quad s(h)=0\;\mbox{for}\;h\ge 1,\\[1ex]
s''(h)=\frac{1}{h^{\frac{m}{2}}}\;\mbox{for}\;h<1.
\end{array}
\end{align}
Since by~\eqref{mn02}
\begin{align}
\frac{\dx}{\dx{t}}\int_{\R} s(h_t)\dx{x}=\int_{\R} s''(h_t)j_t\partial_x h_t \dx{x} \, ,
\end{align}
we obtain from~\eqref{mn03} and by Cauchy-Schwarz in the $x$-variable
\begin{align}
\abs*{\frac{\dx}{\dx{t}}\int_{\R} s(h_t) \dx{x} }^2\le\int_{\R}\frac{j_t^2}{h_t^m} \dx{x}
\int_{\R}\bra*{\partial_xh_t}^2  \dx{x} \, ,
\end{align}
Thus, by~\eqref{mn04} and~\eqref{mn05}, 
\begin{align}
\int_0^T\abs*{\frac{\dx}{\dx{t}}\int_{\R} s(h_t) \dx{x}}^2 \dx{t}\le 2(I_T(h)+E(h_0))^2 \, .
\end{align}
By integration and Cauchy-Schwarz in the $t$-variable, this yields
%
%\begin{align}
%|S(h_T)-S(h_0)|^2\le2T(R_T+E(h_0))^2.
%\end{align}
%
%
\begin{align}
\frac{1}{\sqrt{2T}}\abs*{\int_{\R} s(h_T)\dx{x}-\int_{\R} s(h_0) \dx{x}}\le I_T(h)+E(h_0) \, .
\end{align}
Appealing once more to~\eqref{mn04} this entails
\begin{align}
\frac{1}{\sqrt{2T}}\int_{\R} s(h_T) \dx{x} +E(h_T)\le\frac{1}{\sqrt{2T}}\int_{\R} s(h_0)\dx{x}+2E(h_0)+2I_T(h) \, .
\end{align}
For our special initial data $h_0\equiv 1$ and in view of~\eqref{mn03}, this simplifies to
\begin{align}\label{mn07}
\frac{1}{\sqrt{2T}}\int_{\R} s(h_T) \dx{x}+\frac{1}{2}\int_{\R}(\partial_x h_T)^2 \dx{x} \le 2I_T(h) \, .
\end{align}
%
%Using Assumption \ref{7.2} on definition (\ref{mn03}) we get 
We note now that 
\begin{align}
s(h) \gtrsim \bra*{\frac{1}{h^{\frac{m}{4}-1}}-1}_+^2 \, .
\end{align}
Thus,~\eqref{mn07} implies by Cauchy-Schwarz in the $x$-variable
\begin{align}
\int_{\R} \left(\frac{1}{h_T^{\frac{m}{4}-1}}-1\right)_+\abs*{\partial_x h_T} \dx{x} \lesssim T^\frac{1}{4}I_T(h) \, .
\label{mn00}
\end{align}
For $m=8$, the left hand side of the above expression is equal to $\int_{0}^1\abs*{\partial_x(\ln\frac{1}{h_T}+h_T-1)_+}\dx{x}$.
Since the spatial average of $h_T$ is equal to one, $(\ln\frac{1}{h_T}+h_T-1)_+$ must vanish in at least one point. Thus, the left hand side of~\eqref{mn00} controls $\sup_{x \in \R}(\ln\frac{1}{h_T}+h_T-1)_+$. This establishes the first item in~\eqref{mn10}; the second item follows similarly.
\end{proof}
We conclude this section by showing that a curve with finite rate functional also has the expected regularity in time. While this is a priori unrelated to non-negativity of the film, we will see that we can use this scale-invariant regularity estimate to obtain a strengthening of~\cref{m>=8}  in ~\cref{cor:m>=8}, where the mobility exponent $m=8$ again plays a special role.
\begin{prop}\label{timereg}
Assume $M(h)=h^m$ for some $m \geq 0$ and consider a curve $[0,\infty) \ni t \mapsto h_t \in \mathcal{M}$ such that
\begin{align}
\bar I(h):=\frac{1}{2}\int_{0}^\infty g_{h_t}(\partial_t h_t, \partial_t h_t) \dx t +  \frac{1}{2}\int_{0}^\infty g_{h_t}(\nabla E (h_t), \nabla E (h_t)) \dx t + E(h_0)  <\infty \, .
\end{align}

Then, $h_t$ is locally H\"older continuous in time with exponent $\frac{1}{8}$. Furthermore, it satisfies the following scale-invariant estimate
\begin{equation}
    \abs{h_t(x) -h_s(y)} \lesssim \bar I^{\frac{1}{2}}(h)\bra*{\min\{h_t(x),h_s(y)\}^{\frac{m}{8}}\abs{t-s}^{\frac18} + \abs{x-y}^{\frac12}} \, ,
\end{equation}
for all $x,y \in \R$ and $|t-s| \ll \bar{I}^{-4}(h) \min\{h_t(x), h_s(y)\}^{8-m}$, where the implicit constants in $\lesssim$, $\ll$ depend only on $m$.
\end{prop}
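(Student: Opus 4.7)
The plan is to prove the spatial and temporal Hölder bounds separately and combine them via the triangle inequality, with the temporal bound obtained by an $L^2$-to-$L^\infty$ interpolation. The common ingredient is the uniform energy control $E(h_r)\lesssim \bar I(h)$, which follows from $E(h_0)\le \bar I$ (a term in the definition of $\bar I$), the identity $E(h_t)-E(h_0)=\int_0^t g_{h_r}(\partial_r h_r,\nabla E(h_r))\dx{r}$, and Young's inequality in the spirit of~\eqref{eq:energyI}. This immediately yields the spatial estimate $\abs{h_r(x)-h_r(y)}\lesssim \bar I^{1/2}\abs{x-y}^{1/2}$ by applying Cauchy--Schwarz to $\partial_x h_r\in L^2(\R)$.

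For the temporal bound, fix $x$ and write $\tau:=\abs{t-s}$, $h_*:=\min\{h_t(x),h_s(y)\}$. Starting from the continuity equation $\partial_r h_r+\partial_x j_r=0$ (with $j_r$ the minimizing flux in~\eqref{mettens}), I would test against $h_r-h_s$, integrate over $r\in[s,t]$, and integrate by parts in space to obtain
\begin{align}
\tfrac12\int_{\R}(h_t-h_s)^2\dx{x} = \int_s^t\int_{\R}(\partial_x h_r - \partial_x h_s)\,j_r\dx{x}\dx{r}.
\end{align}
Two applications of Cauchy--Schwarz (in space and in time) in the form $\abs{\int j_r\partial_x h}\le (\int j_r^2/M(h_r))^{1/2}(\int M(h_r)(\partial_x h)^2)^{1/2}$, combined with the dissipation bound $\int_s^t g_{h_r}(\partial_r h_r,\partial_r h_r)\dx{r}\le 2\bar I$, the energy bound $E(h_r)\lesssim \bar I$, and the pointwise bound $M(h_r(z))\lesssim h_*^m$ on the relevant spatial window, then yield
\begin{align}
\int_{\R}(h_t-h_s)^2\dx{x}\lesssim \bar I\,\tau^{1/2}h_*^{m/2}.
\end{align}
Since also $\int_{\R}(\partial_x(h_t-h_s))^2\dx{x}\lesssim \bar I$, the one-dimensional Gagliardo--Nirenberg inequality $\|u\|_{L^\infty}^2\lesssim \|u\|_{L^2}\|\partial_x u\|_{L^2}$ upgrades the $L^2$-bound to $\|h_t-h_s\|_{L^\infty}\lesssim \bar I^{1/2}h_*^{m/8}\tau^{1/8}$. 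The interpolation square root is exactly what converts the naive $\tau^{1/4}$ into the correct fourth-order parabolic exponent $\tau^{1/8}$ for $C^{1/2}$ spatial regularity under the scaling $t\sim\ell^4/M(h)$.

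The main technical obstacle is justifying the pointwise bound $M(h_r(z))\lesssim h_*^m$: a priori $h_r(z)$ can exceed $h_*$ considerably and the Gagliardo--Nirenberg inequality requires a global $L^2$-norm. I would handle both issues at once by localizing the $L^2$-identity above with a smooth spatial cutoff $\eta_L$ around $x$ of width $L\sim (\tau h_*^m)^{1/4}$. The spatial $C^{1/2}$-control forces $\abs{h_r(z)-h_r(x)}\lesssim \bar I^{1/2}L^{1/2}$ on $\supp\eta_L$, and the standing hypothesis $\tau\ll \bar I^{-4}h_*^{8-m}$ is precisely what makes this variation much smaller than $h_*$; together with the a posteriori temporal bound $\abs{h_r(x)-h_*}\ll h_*$ for $r\in[s,t]$ (whose proof coincides with the conclusion for $y=x$ and so can be obtained by a short continuity-in-time bootstrap), this locks in $h_r(z)\sim h_*$ on the entire relevant space-time box. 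The commutator term generated by $\eta_L'$ in the localized identity has the same scaling as the main term and is absorbed. Finally, combining the spatial and temporal estimates via $\abs{h_t(x)-h_s(y)}\le \abs{h_t(x)-h_s(x)}+\abs{h_s(x)-h_s(y)}$ gives the claimed scale-invariant bound, and local $\tfrac{1}{8}$-Hölder continuity in time follows by specializing $y=x$ on any compact set where $\inf h$ is positive.
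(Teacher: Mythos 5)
Your proposal is correct in its essentials and takes a genuinely different route from the paper, so let me compare the two.

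The paper first normalizes to $h_0(0)\leq 1$, $\bar I(h)\leq 3$ and establishes a local-in-space, short-in-time pointwise bound $h_t(x)\lesssim 1$ on $[-1/2,1/2]\times[0,t_*]$ via an ODE-comparison argument (the $F_t=\int\varphi h_t$ step and \cref{lemintineq}). It then tests the continuity equation against a mollifier $\varphi_\eps$ centered at the point of interest, applies Cauchy--Schwarz and Young, and optimizes the mollification scale $\eps$ to get $|h_t(0)-h_0(0)|\lesssim \eps^{1/2}+\eps^{-7/2}t$, hence $t^{1/8}$; finally it rescales back to restore the scale-invariant form. Your route avoids the pointwise mollification and instead derives a localized $L^2$-identity for $h_t-h_s$ from the continuity equation, bounds it by $\bar I\,\tau^{1/2}h_*^{m/2}$ via Cauchy--Schwarz (with the commutator term from $\eta_L'$ turning out to be of the same order as the main term), and then upgrades to $L^\infty$ by the one-dimensional Agmon/Gagliardo--Nirenberg inequality together with $\|\partial_x(h_t-h_s)\|_{L^2}^2\lesssim\bar I$. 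Both methods are dual incarnations of the same scaling heuristic $\tau\sim\ell^4/M(h)$: in the paper the mollifier width $\eps$ is the scale being optimized; in your argument it is the $\|u\|_{L^2}/\|\partial_x u\|_{L^2}$ ratio in the interpolation. Both proofs also have the same nontrivial bootstrap at their core: the paper packages it as the ODE bound $F_t\leq 3$ on $[0,t_*]$, and you package it as the continuity-in-time argument that locks in $h_r(z)\sim h_*$ on the space-time box (and you correctly identify that the standing hypothesis $\tau\ll \bar I^{-4}h_*^{8-m}$ is precisely what closes this loop). Two small things you should keep in mind when filling in details: on a bounded window of length $L$ the interpolation inequality acquires the extra term $L^{-1}\|u\|_{L^2(I_L)}^2$, which with your choice $L\sim(\tau h_*^m)^{1/4}$ happens to scale identically to the main term, so it does not degrade the exponent but must be carried along; and your commutator estimate uses the temporal bound $|h_r(x_0)-h_s(x_0)|$ that is itself being proved, so the constant in the bootstrap must be tracked carefully to avoid a runaway (this is routine but should not be elided). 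The paper's normalize-then-rescale scheme is somewhat cleaner for reading off the scale-invariant form of the final estimate; your scheme proves it directly at the cost of juggling more parameters in the intermediate steps.
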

\begin{proof}
To start with, we consider the case where $[0,\infty)\ni  t \mapsto {h}_{ t}$ is such that ${h}_0(0) \leq 1$ and $\bar I( h) \leq 3$. Note that this along with~\eqref{eq:energyI} implies that
\begin{equation}
\sup_{t \in [0,\infty)}E( h_{ t}) \leq \bar I( h) \leq 3 \, ,  
\label{eq:energybounded}
\end{equation}
which in turn implies that $h_t$ is $\frac12$-H\"older continuous in space for all $t\geq 0$ with the bound
\begin{equation}
    \abs*{{h}_{ t}( x) - {h}_{ t}( y)} \lesssim \bar{I}^{\frac12}( h)\abs{ x- y}^{\frac12} \, .
    \label{eq:holder12}
\end{equation}
We now fix a smooth compactly supported nonnegative function $\varphi $ which is strictly positive in $(-1,1)$ and satisfies $\int_\R \varphi \dx{x}=1$ and $\varphi(x) \leq 1$. We then define
\begin{equation}
    F_{ t} := \int_{\R}\varphi  h_{ t} \dx{ x} \, . 
\end{equation}
We then have
\begin{align}
    \frac{\dx }{\dx  t} F_{ t} =& \int_{\R}  \varphi'  j_{ t} \dx{ x} \, , 
\end{align}
where $ j_{ t}= j_{ t}( x)$ is a time-dependent flux field which solves
\begin{equation}
    \partial_{ t}  h_{ t} + \partial_{ x}  j_{ t} =0 \, .
\end{equation}
Dividing and multiplying by $h_t^{\frac m2}$ and then applying the Cauchy--Schwarz inequality in space, we obtain
\begin{align}
     \frac{\dx }{\dx  t} F_{ t} 
     \leq &\bra*{\int_{\R} \bra*{ \varphi'}^2  h_{ t}^{m} \dx{ x}}^{\frac12} a( t) \, 
\end{align}
where
\begin{align}
    a({t}) := \bra*{\int_{\R} \frac{ j_{ t}^2}{ h_{ t}^m} \dx{ x}}^{\frac12}.
\end{align}
For the first term  on the right hand side of the above expression, we have the following bound
\begin{align}
    \int_{\R} \bra*{\varphi'}^2  h_{ t}^{m} \dx{ x} \leq & \sup_{x \in \R}(\varphi')^2 \abs*{\int_{-1}^1  h_{ t}^m \dx{ x}} \\
    \lesssim & \abs*{\int_{-1}^1 \bra*{\min_{ x \in [-1,1]} h_{ t}( x) + \int_{ x_*}^{ x}\partial_{ y}  h_{ t}( y) \dx{ y}}^m \dx{ x}}  \, ,
\end{align}
where $ x_*= \mathrm{argmin}_{ x \in [-1, 1]} h_{ t}( x)$. Using~\eqref{eq:energybounded} and Jensen's inequality and the fact that $\varphi$ is strictly positive in $(-1,1)$, we obtain
\begin{align}
    \int_{\R} \bra*{\varphi'}^2  h_{ t}^m\dx{ x} 
    \lesssim & \bra*{ \min_{ x \in [-1,1]}  h_{ t}( x)    + \abs*{ \int_{-1}^1 \bra*{\int_{ x_*}^{ x} \partial_{ y}  h_{ t}( y) \dx{ y}} \dx{ x} }}^m  \\
    \lesssim &  (F_{ t} + 1)^m \, .
\end{align}
 This leaves us with
\begin{align}
    \frac{\dx}{\dx  t} F_{ t} \lesssim &\bra*{1 +F_{ t} }^{\frac{m}{2}}a( t)\, . 
\end{align}
We can now use the fact $\int_0^\infty a^2( t) \dx{ t}\leq 2 \bar{I} ( h)\leq 6$ along with the Cauchy--Schwarz and Young inequalities, to rewrite the above inequality as
\begin{align}
     F_{ t} \lesssim & 1 + F_{0} + {t} +\int_0^{{t}}  F_{ s}^m \dx{ s} \, .
\end{align}
We thus obtain for $t\leq 1$ (cf.~\cref{lemintineq})
\begin{align}
    F_{ t} \lesssim& \bra*{(1 + {t} + F_0)^{1-m} + (1-m){t}}^{\frac{1}{1-m}} \, ,
\end{align}
if $m \neq 1$ and 
\begin{align}
    F_{ t} \lesssim & (1 + F_0) e^{ C {t}}  \, ,
\end{align}
if $m=1$ for some constant $C>0$. In either of the two cases, we have that $F_{ t} \leq 3$ for all $0<  t \leq t_*$ for some $t_* >0$ depending on $m$, as long as $F_0$ is finite, which itself holds true since~\eqref{eq:energybounded} and $h_0(0)\leq 1$ imply 
\[
F_0 \leq  \int_{-1}^1  h_0 \dx{ x} \lesssim 1 \, .
\]
We can then use Jensen's inequality and~\eqref{eq:energybounded} to obtain
\begin{align}
     h_{ t} ( x) = \min_{ x \in [-1,1]}  h_{ t}( x)  + \int_{x_*}^{ x} \partial_{ y}  h_{ t} \dx{ y }
    \lesssim 1 \, ,
    \label{eq:localbound}
\end{align}
for all $0<  t \leq t_*$ and $ x \in [-1/2,1/2]$.

By the shift-invariance\footnote{$\bar I$ is not truly shift invariant, but we simply use the fact that $\bar I(\tau_{y,s} h) \leq 2 \bar I(h)$ with $\tau_{y,s}h_t= h_{t+s }(\cdot +x)$ } of $\bar I$, we may check the time regularity of $h$ at some fixed point, say $ x, t=0$. Define $\varphi_\eps(\cdot):= \eps^{-1}\varphi(\eps^{-1}\cdot)$. Then, for any $0\leq  t \leq t_*$, we can use~\eqref{eq:holder12} to obtain
\begin{align}
\abs*{ h_{ t}(0)- h_0(0)} \lesssim \, \eps^{\frac{1}{2}} + \abs*{\int_0^{ t} \int_\R \varphi^\eps \partial_{ s} {h}_{ s} \dx{ x} \dx{ s}} \, .
\end{align}
As before, we use the fact that $ h_{ t}$ satisfies the continuity equation~\eqref{mn02} with time-dependent flux field $ j_{  t}= j_{ t}( x)$ to obtain
\begin{align}
    \abs*{ h_{ t}(0)- h_0(0)} \lesssim & \, \eps^{\frac{1}{2}}+ \abs*{\int_0^{ t} \int_\R \eps^{-2}\varphi'( x/\eps)  j_{ s}  \dx{ x} \dx{ s}} \, . 
\end{align}
Dividing and multiplying by $h_t^{\frac m2} $ as before and applying the Cauchy--Schwarz and Young inequalities, we obtain
\begin{align}
    \abs*{ h_{ t}(0)- h_0(0)} \lesssim & \, \eps^{\frac{1}{2}}+ \eps^{-\frac{9}{2}}\int_0^{ t} \int_{-\eps}^\eps \bra{\varphi'( x/\eps)}^2  h_{ s}^m   \dx{ x} \dx{ s}+ \eps^{\frac{1}{2}} \int_0^{ t} \int_\R \frac{ j_{ s}^2}{ h_{ s}^m} \dx{ x} \dx{ s} \, . 
    \label{eq:epsdel}
\end{align}
For the second term on the right hand side of the above expression, we rescale in $ x$ and use~\eqref{eq:localbound}, to obtain
\begin{align}
    \eps^{-\frac{9}{2}}\int_0^{ t} \int_{-\eps}^\eps \bra{\varphi'( x/\eps)}^2  h_{ s}^m \dx{ x} \dx{ s} \lesssim & \, \eps^{-\frac{7}{2}} t \, .
\end{align}
For the third term on the right hand side of~\eqref{eq:epsdel} we simply apply the bound~\eqref{mn05} and use the fact that the $\bar I ( h)$  is bounded to arrive at
\begin{align}
    \eps^{\frac{1}{2}}\int_0^{ t} \int_\R \frac{ j_{ s}^2}{ h_{ s}^m} \dx{ x} \dx{ s}  \lesssim & \, \eps^{\frac{1}{2}} \, .
\end{align}
This leaves us with 
\begin{align}
    \abs*{h_{ t}(0)- h_0(0)} \lesssim & \, \eps^{\frac{1}{2}} + \eps^{-\frac{7}{2}} t \, .
\end{align}
Choosing $\eps = t^{\frac{1}{4}}$ and applying~\eqref{eq:holder12}, we obtain
\begin{align}
\abs{ h_{ t}( x) - {h}_0(0)} \lesssim \abs{ t}^{\frac{1}{8}} + \abs{ x}^{\frac12}  \, ,   
\end{align}
for $( t, x) \in [0,t_*) \times \R $.  

We can now rescale  to recover the corresponding estimate for an arbitrary $[0,\infty) \ni t \mapsto h_t \in \cM $ with $\bar I(h)< \infty$. To this end, we introduce
\begin{align}
    \hat h_{\hat t} (\hat x) = \lambda h_t ( x) \, , \hat x =\mu  x \, , \hat t = \nu  t
\end{align}
for some $\lambda,\nu, \mu >0 $ to be chosen later.  Under this choice of scaling, we have
\begin{align}
    E(h_t) =& \int_{\R} \bra*{\partial_x h_t}^2 \dx{x} 
    =\mu \lambda^{-2}E(\hat h_{\hat t}) \, ,
\end{align}
and 
\begin{align}
    \frac{1}{2}\int_0^\infty \int_{\R}\frac{j_t^2}{h_t^m} \dx{x} \dx{t}  
   = \nu \mu^{-3}\lambda^{m-2}  \frac{1}{2}\int_0^\infty \int_{\R}\frac{\hat j_{\hat t }^2}{\hat h_{\hat t}^m} \dx{\hat x} \dx{\hat t}  \, ,
\end{align}
where $j_t=j_t(x)$ is as before and $\hat j_{\hat t}= \hat j_{\hat t}(\hat x)$ satisfies
\begin{align}
    \partial_{\hat t } \hat h_{\hat t} +\partial_{\hat x} \hat j_{\hat t} =0 \, .
\end{align}
Furthermore, the remaining term in $\bar I$ scales as
\begin{align}
    \frac{1}{2} \int_0^{\infty} \int_{\R} \bra*{\partial_x^3 h_t}^2 h^m_t \dx{x} \dx{t} = \lambda^{-m-2} \mu^5 \nu^{-1} \frac{1}{2} \int_0^{\infty} \int_{\R} \bra*{\partial_{\hat{x}}^3 \hat{h}_{\hat{t}}}^2 \hat{h}^m_{\hat{t}} \dx{\hat{x}} \dx{\hat{t}} \, .
\end{align}
Since we may assume, without loss of generality, that $h_0(0)>0$, we make the following choices
\begin{align}
    \lambda = \frac{1}{h_0(0)} \, , \mu =\lambda^2 \bar I (h)\, , \nu = \mu^3 \lambda^{2-m} \bar I(h) \, .
\end{align}
It follows that $\bar I(\hat h) \leq 3$, and $\hat{h}_0(0)=1$. We thus have
\begin{align}
    \abs{h_t(x)-h_0(0)} =& \lambda^{-1}\abs{\hat h_{\hat t}(\hat x)-1} 
    \lesssim  \lambda^{-1} \bra*{\nu^{\frac18}\abs{t}^{\frac18} + \mu^{\frac12}\abs{x}^{\frac12} } \\
    \lesssim & h_0(0)\bra*{ \bar I^{\frac12}(h) h_0^{\frac{m-8}{8}}(0)\abs{t}^{\frac18} + h_0^{-1}(0) \bar I^{\frac12}(h)\abs{x}^{\frac12}} \, ,
\end{align}
 for all $0 \leq t \leq \bar I^{-4}(h) h_0^{8-m}(0)t_*$  and $ x \in \R$. 
%Applying~\eqref{mn05}, we obtain
% \begin{align}
%     \abs{h_t(x)-h_0(0)} \lesssim \bar I^{\frac{1}{2}}(h)\bra*{h_0^{\frac{m}{8}}(0) \abs{t}^{\frac18} + \abs{x}^{\frac12}}
% \end{align}
% for all $0 \leq t \leq \bar I^{-4}(h) h_0^{8-m}(0)t^* $ and $x \in \R$.
\end{proof}
\begin{cor}
    Let $m \geq 8$ and let $t \mapsto h_t \in \mathcal{M}$ satisfy $\bar{I}(h) < \infty$.
    Assume that, for some $x \in \R$, $h_0$ is almost touching down, i.e. $h_0(x) \ll 1$. Then, for all $t \geq 0$ such that $h_{t}(x) = 1$ it holds that
    \begin{align}
        t \gtrsim
        \begin{cases}
            \bar{I}^{-4}(h)h_0^{8-m}(x) \, &\mathrm{for} \, m > 8 \\
            \bar{I}^{-4}(h)\ln(h^{-1}_0(x)) \, &\mathrm{for} \, m = 8 \, ,
        \end{cases}
    \end{align}
    where the implicit constant in $\gtrsim$ depends only on $m$.
    \label{cor:m>=8}
\end{cor}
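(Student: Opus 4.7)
The plan is to iteratively apply the scale-invariant H\"older-in-time estimate of~\cref{timereg} to the scalar function $t \mapsto h_t(x)$ at the given point $x$, tracking how long it takes to climb from the near-zero value $h_0(x) \ll 1$ up to the value $1$. By the time-continuity provided by~\cref{timereg}, it suffices to lower bound $t_\ast := \inf\{t \ge 0 : h_t(x)=1\}$.

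Let $K$ be the largest integer with $2^{-K} > h_0(x)$, so that $K \sim \ln h_0^{-1}(x)$, and introduce the decreasing sequence of times
\begin{align*}
\tau_k := \sup\{s \in [0,t_\ast] : h_s(x) \le 2^{-k}\}, \qquad k=0,1,\dots,K.
\end{align*}
By continuity $h_{\tau_k}(x) = 2^{-k}$ and, on each sub-interval $[\tau_k,\tau_{k-1}]$, one has $h_s(x) \ge 2^{-k}$; in particular $\min\{h_{\tau_k}(x), h_{\tau_{k-1}}(x)\} = 2^{-k}$. The core step is to establish that, for every $k$,
\begin{align*}
\tau_{k-1}-\tau_k \gtrsim \bar I^{-4}(h)\, 2^{k(m-8)},
\end{align*}
which I would prove by contradiction: were $\tau_{k-1}-\tau_k$ substantially smaller than $\bar I^{-4}(h)(2^{-k})^{8-m}$, the validity condition of~\cref{timereg} would be met with $(t,s,y)=(\tau_{k-1},\tau_k,x)$, yielding
\begin{align*}
2^{-k} = |h_{\tau_{k-1}}(x)-h_{\tau_k}(x)| \lesssim \bar I^{1/2}(h)\,(2^{-k})^{m/8}(\tau_{k-1}-\tau_k)^{1/8},
\end{align*}
whose rearrangement contradicts the hypothesis, provided the implicit ``$\ll$''-constant in the contradictory assumption is chosen strictly smaller than the universal constant in the conclusion of~\cref{timereg}.

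Summing the step-by-step bound over $k = 1,\dots, K$ then gives
\begin{align*}
t_\ast \;\ge\; \tau_0 \;\ge\; \sum_{k=1}^{K}(\tau_{k-1}-\tau_k) \;\gtrsim\; \bar I^{-4}(h)\sum_{k=1}^{K} 2^{k(m-8)}.
\end{align*}
For $m>8$ the geometric sum is comparable to its last term $2^{K(m-8)} \sim h_0^{8-m}(x)$, while for $m=8$ every summand equals $1$ and the sum is $K \sim \ln h_0^{-1}(x)$, giving precisely the two cases of the conclusion.

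The main obstacle is that~\cref{timereg} is essentially sharp in its range of validity: the admissible time window $|t-s| \ll \bar I^{-4}(h) \min^{8-m}$ is of exactly the same order as the lower bound being extracted. The estimate therefore cannot be applied directly; it must be combined with the contradiction above, with careful tracking of the two implicit constants to guarantee compatibility. A secondary, minor point is the continuity of $t \mapsto h_t(x)$ needed to make sense of the stopping times $\tau_k$ and the identity $h_{\tau_k}(x)=2^{-k}$, but this follows immediately from~\cref{timereg}.
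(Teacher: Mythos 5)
Your proof is correct, and it is a genuine variant of the paper's argument rather than a reproduction of it. The paper iterates \emph{forward} from $t=0$: it sets $\tau_0=0$ and then advances time by prescribed increments $\tau_k-\tau_{k-1}=\bar I^{-4}(h)h_{\tau_{k-1}}^{8-m}$, which are exactly at the edge of the admissible window in \cref{timereg}; the lemma then shows the height can at most double over each increment, so $h_{\tau_i}\le 2^i h_0$, and reaching height $1$ therefore requires $n\sim\log_2 h_0^{-1}$ steps, whose durations one sums. You instead iterate \emph{backward} from the first hitting time $t_\ast$ of level $1$: you introduce the stopping times $\tau_k$ of the dyadic levels $2^{-k}$ and lower-bound each gap $\tau_{k-1}-\tau_k$ by a dichotomy --- either the gap already exceeds the validity threshold of \cref{timereg}, or the lemma applies and forces the gap to exceed a comparable amount, a contradiction if the two implicit constants are arranged correctly. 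The two routes are dual: yours sums a geometric series dominated by its last ($k=K$) term, the paper's by its first ($i=0$) term, and both reduce to $\bar I^{-4}h_0^{8-m}$ for $m>8$ and to $\bar I^{-4}\log h_0^{-1}$ for $m=8$. What your version buys is that it is ansatz-free on the time side: you do not have to guess the increments $\tau_k-\tau_{k-1}$ in advance, but you do pay by needing the constant-matching discussion and the (minor, as you note) continuity argument to make the stopping times well-defined and to identify $h_{\tau_k}(x)=2^{-k}$; the paper sidesteps both of these by fiat, by prescribing the increments and normalizing the constants in \cref{timereg} to $1$. Both proofs are sound.
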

\begin{proof}
   The dependence on $x$ does not play any role in the proof since the argument we will present is pointwise in space. 
  We will thus omit it for the rest of the proof. Moreover, we will set the implicit constants in $\lesssim$ in \cref{timereg} to $1$.
    By \cref{timereg}, we have for $0 \leq t \leq \bar{I}^{-4}(h) h_0^{8-m}$
    \begin{align}
        \abs*{h_t - h_0} \leq \bar{I}^{\frac{1}{2}}(h) h_0^{\frac{m}{8}} t^{\frac{1}{8}} \leq h_0.
    \end{align}
    Then, we set $\tau_0 := 0$ and $\tau_1 := \bar{I}^{-4}(h) h_0^{8-m}$ and we observe that we have
    \begin{align}
        h_{\tau_1} \leq 2 h_0.
    \end{align}
    Inductively, we define $\tau_k := \tau_{k-1} + \bar{I}^{-4}(h) h_{\tau_{k-1}}^{8-m}$ for $k \in \N$. Then, it holds that
    \begin{align}
        \tau_k = \bar{I}^{-4}(h) \sum_{i=0}^{k-1} h_{\tau_i}^{8-m}
    \end{align}
    as well as (using \cref{timereg})
    \begin{equation}
        h_{\tau_i} \leq 2^i h_0.
        \label{eq:discretebound}
    \end{equation}
    Choosing $n := \lceil \log_2(h^{-1}_0)\rceil$ we have $h_t \geq  1$ only if $t \geq \tau_n$.   Note that if $m\geq 8$, we can apply~\eqref{eq:discretebound} to obtain $h_{\tau_i}^{8-m} \geq 2^{i(8-m)}h_{0}^{8-m}$. This tells us that
    \begin{align}
        \tau_n =& \bar{I}^{-4}(h) \sum_{i=0}^{n-1} h^{8-m}_{\tau_{i}}\\
        \geq& \bar{I}^{-4}(h) \log_2(h_0^{-1}) \, ,
    \end{align}
    for $m = 8$. The case $m > 8$ can be derived in an essentially identical manner.
\end{proof}
\section{Numerical experiments}\label{sec:num}
\subsection{Description of the time-stepping scheme}
We describe here the time-stepping scheme for the SDE \eqref{finaldstfematrix} with the \emph{Gr\"un--Rumpf} metric as described in \cref{grsection}. The central difference discretization (cf. \cref{sec:cdd}) is treated in an identical manner. 
For our simulations, we rely on a semi-implicit Euler--Maruyama method which treats the noise, It\^o-correction term, and metric tensor in~\eqref{finaldstfematrix} explicitly but treats the rest of the drift in an implicit manner. With $\Delta t>0$ denoting the time step, the scheme can be described as follows  
\begin{align}\label{timestepping}
\begin{cases}
    h_0 &= h \in \mathcal{M}_N  \\
    % h^i_{k+1} =& (\delta_{ll'} + \Delta t \at^l_{\al} \grm^{\al}_{\al'}(h_k)A^{\al'}_{n} \at^n_{\al''} A^{\al''}_{l'} )^{i j}h^j_k + \Delta t  \frac{1}{\beta} N \partial_j \at^i_{\al} \grm^{\al \al'}(h_k) \at^{j}_{\al'} \\ 
    % &+ \sqrt{2\Delta t  N \frac{1}{\beta}} \at^i_{\al} \sqrt{\grm}^{\al}_{\al'}(h_k) G^{\al'}_k
    h_{k+1} &= \bra*{\mathrm{Id} + \Delta t A^T G^{-1}(h_k) A A^T A }^{-1} \Big[ h_k +  \frac{\Delta t N}{\beta} A^T \bar{D} \cdot G^{-1}(h_k) \\
    &\quad+ \sqrt{\frac{2 N \Delta t}{\beta}} A^T \sqrt{G}^{-1}(h_k) W_k \Big]
\end{cases}
\end{align}
for all $k \in \N$, where $h_k$ denotes the vector of film heights at the nodal points $\bra*{x_i}_i$ and at time $k \Delta t$ and $ \bra*{W_k}_k$ is a sequence of independent $\mathcal{N}(0,I)$-distributed random vectors. We refer the reader to~\cref{comp} where we provide numerically stable expressions for the inverse metric and the It\^o-correction term. For the specific choice of $M(h) = h^3$ the inverse metric $G^{-1}(h_k)$ is computed at each time step using \eqref{metriccomp} and the It\^o-correction term $A^T\bar{D} \cdot G^{-1}(h_k)$ using~\eqref{itocnum}. Since $G^{-1}$ is a diagonal matrix, its square root can be computed explicitly. Due to the semi-implicit nature of the time-stepping scheme, in each step we have to compute the inverse of $\mathrm{Id} + \Delta t A^T G^{-1}(h_k) A A^T A $ which we do using the \texttt{MATLAB} function \texttt{mldivide}, which itself uses a Cholesky decomposition to perform the required matrix inversion.

\begin{figure}
    \centering
    \begin{minipage}{0.4\textwidth}
    \centering
    \includegraphics[width=1.1\linewidth]{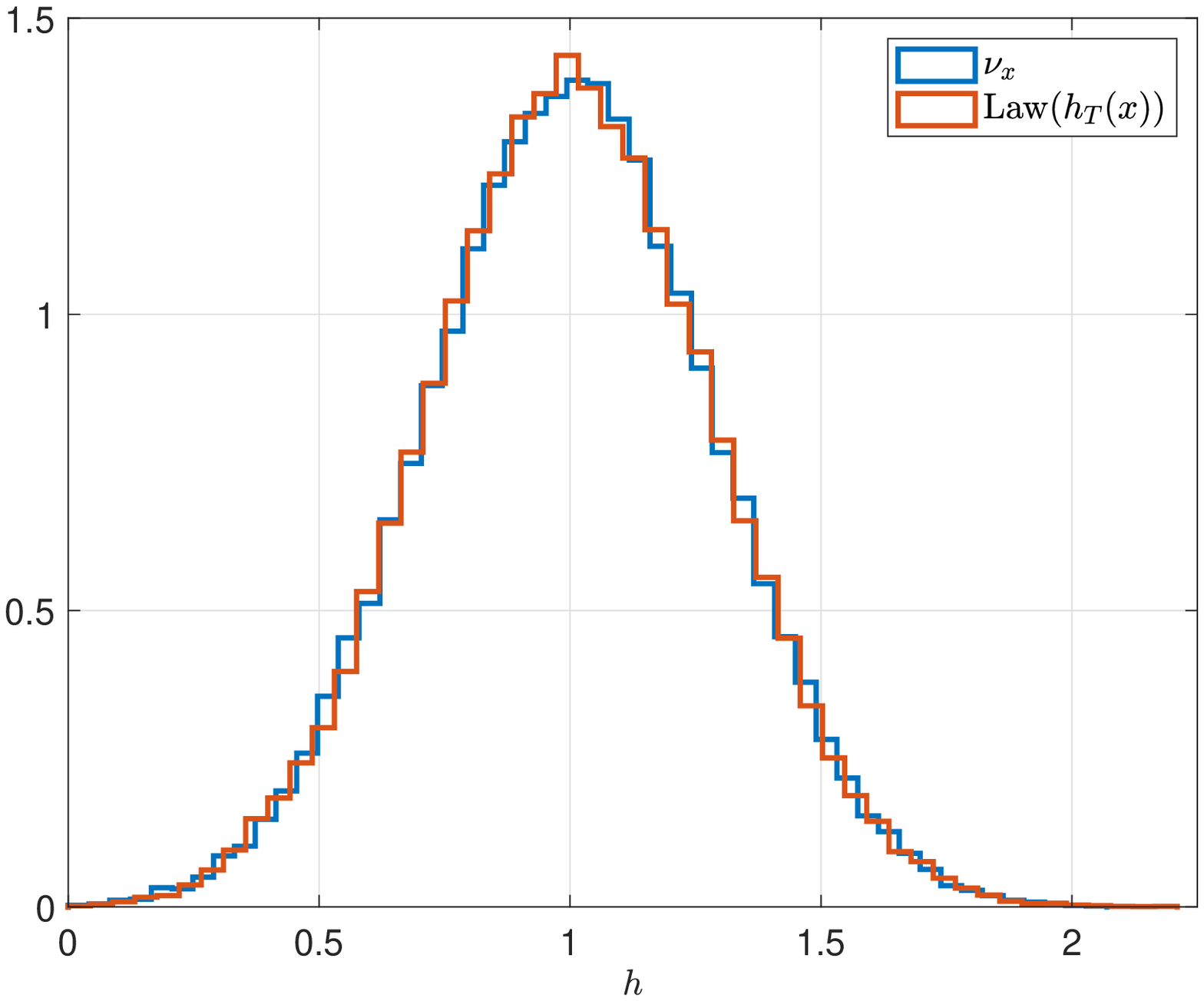}
    \subcaption{}
    \end{minipage}%
    \begin{minipage}{0.4\textwidth}
    \centering
    \includegraphics[width=1.1\linewidth]{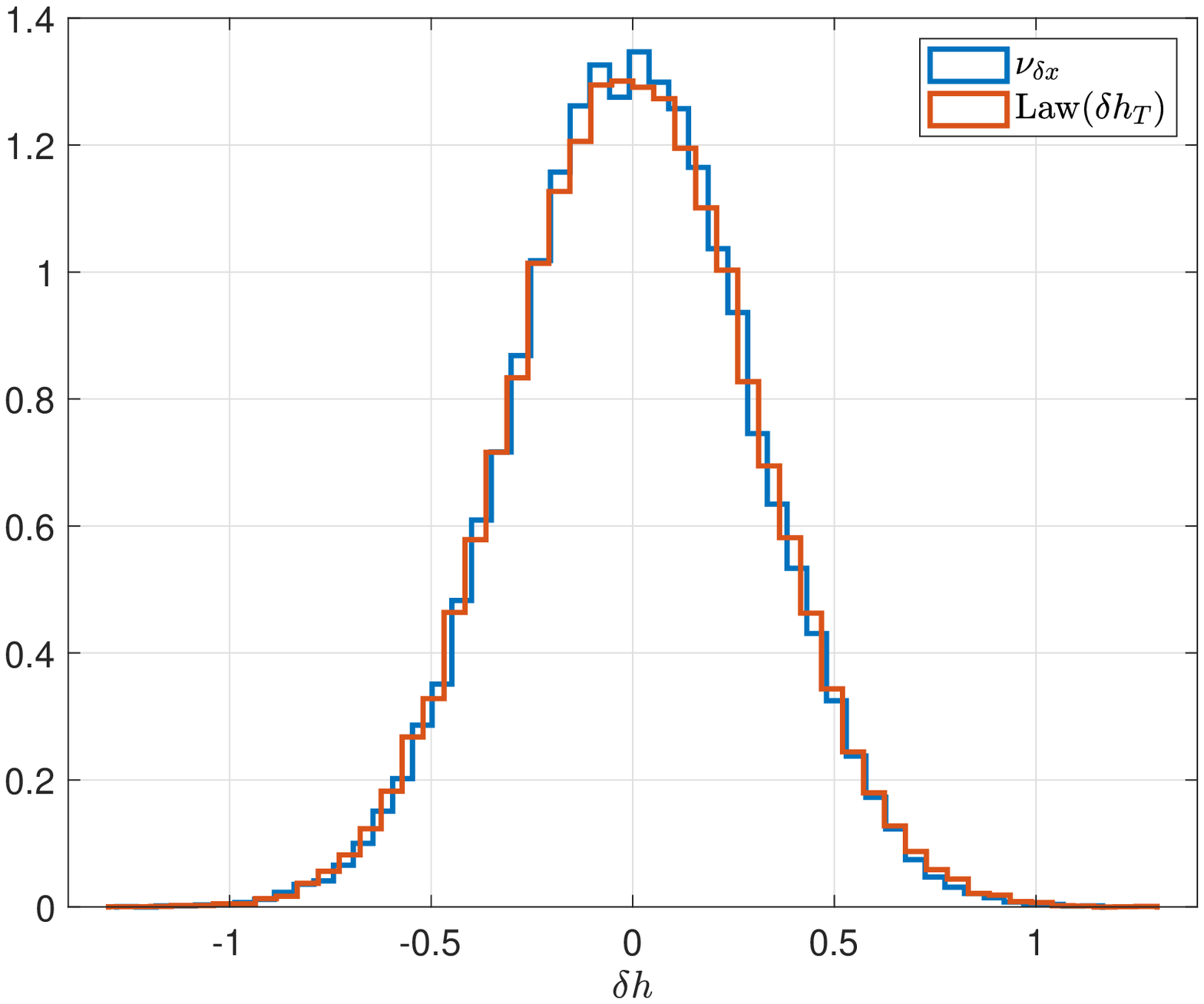}
    \subcaption{}
    \end{minipage}
    \begin{minipage}{0.4\textwidth}
    \centering
    \includegraphics[width=1.1\linewidth]{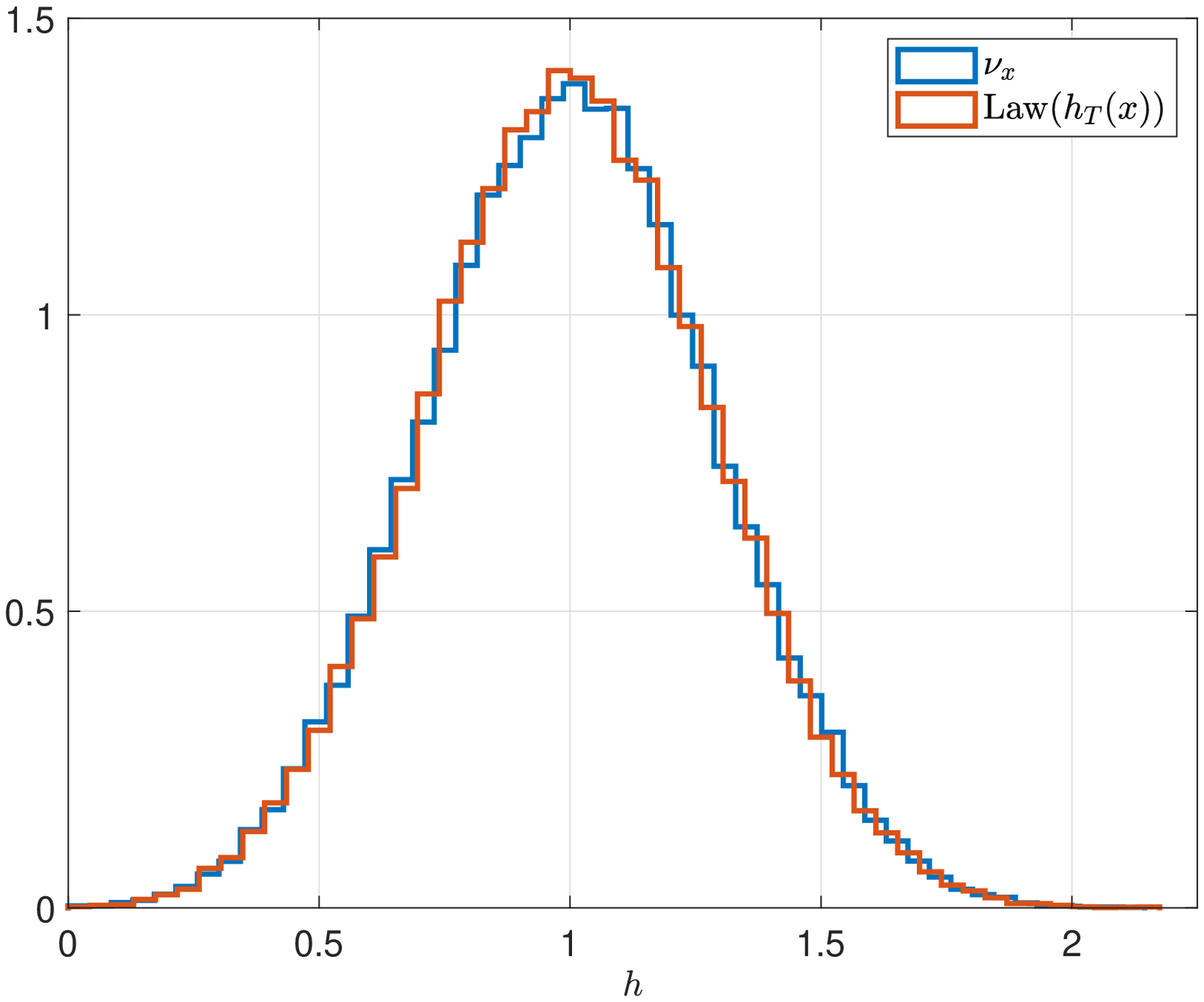}
    \subcaption{}
    \end{minipage}
    \begin{minipage}{0.4\textwidth}
    \centering
    \includegraphics[width=1.1\linewidth]{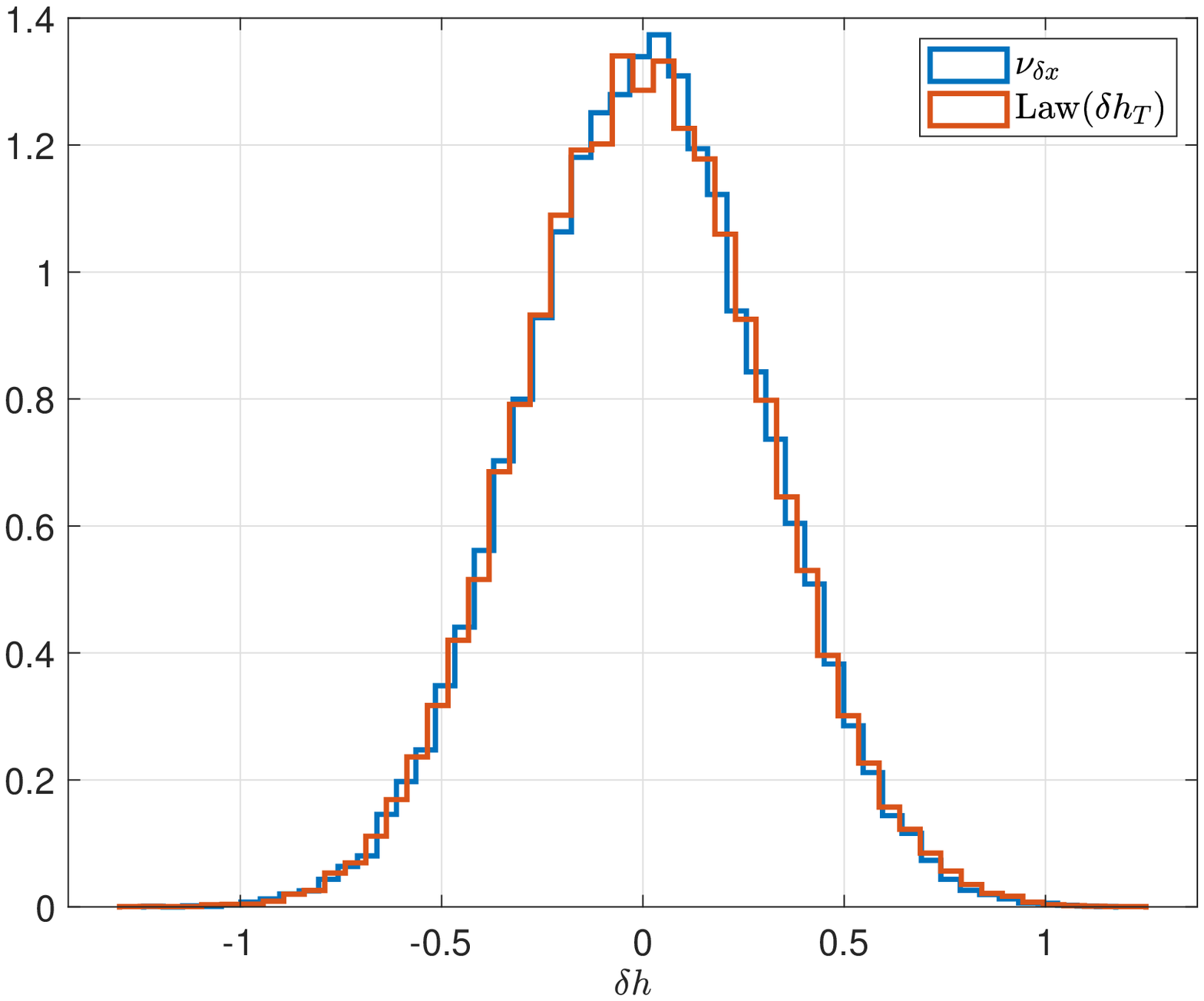}
    \subcaption{}
    \end{minipage}
    \caption{Plots of the histograms for $M=1000$ samples of the single-point statistics  and two-point correlations of the film height, i.e. $h_T$ and $\delta h_T=h_T(x+\delta x)-h_T(x)$, for the \emph{Gr\"un--Rumpf} ((A),(B)) and the central difference ((C),(D)) discretizations compared to the reference measure, the conservative Brownian excursion $\nu_N$. The simulations were carried out with the following parameters: $N=50$, $\Delta t =10^{-10}$, $\beta=1$, $T=10^{-3}$, $\delta x=0.1$, and $h_0\equiv 1$.}
    \label{fig:invariance}
\end{figure}
\subsection{Invariance of the measure $\nu_N$}\label{sec:num:inv}
In this subsection, we perform some numerical experiments to check the invariance of the measure $\nu_N$. We start by describing below a simple numerical procedure to sample from $\nu_N$.

\begin{algorithm}[H]
\SetAlgoLined
\KwResult{Realization of $\nu_N$}
    Sample discrete spatial white noise at temperature $\beta^{-1}$, i.e. a random $N$-dimensional vector of i.i.d. $\mathcal{N}(0,\beta^{-1}N \times  \mathrm{Id})$-distributed random variables $dW_N$\;
    Project onto average zero vectors: $\dx W_N^0=\dx W_N- N^{-1}\sum_i \dx W_{N,i}$\; 
    Integrate to get a discrete Brownian bridge: $W_{N,1}^0=0,\, W_{N,i}^0=W_{N,i-1}^0+N^{-1}\dx W_{N, i-1}^0$\;
    Project onto average $1$ vectors: $W_N=W_N^0 - N^{-1}\bra*{\sum_i W_{N,i}^0}  +1 $\;
  \eIf{$\exists i$ s.t. $W_{N,i}<0$}{
   reject\;
   }{
   accept\;
 }
 \caption{Sampling from $\nu_N$}
 \label{nualgo}
\end{algorithm}
%\begin{enumerate}[label=Step \arabic*:]
%\item Sample discrete spatial white noise, i.e. a random vector of i.i.d. \\
%        $N(0,1)-$distributed random variables
%\item Project onto mean free vectors
%\item Integrate to get a shifted Brownian bridge
%\item Project onto mass $1$
%\item Accept or reject if positive or not.
%\end{enumerate}
We now integrate in time  starting from $h_0 \equiv 1$ according to the semi-implicit Euler--Maruyama algorithm described in \eqref{timestepping} up to some large time $T \gg \Delta t$.   Repeating this procedure, we obtain a large number of samples, $M\gg1$, of the process at time $t=T$ which we compare to the samples of $\nu_N$ generated by~\cref{nualgo}. Note that $T$ needs to be chosen to be larger than the typical relaxation time (to the invariant measure) of both discretizations. We found that $T=10^{-3}$ works well for this purpose. We compare both the single-point distributions and the two-point correlations, i.e. the law of $\delta h_T = h_T(x+ \delta x)-h_T(x)$ for some $\frac{1}{N}=: \Delta x \ll \delta x \ll 1$. Due to the stationarity (in space) of the invariant measure the choice of $x \in \pra*{0,1}$ is irrelevant.  We present the results of this experiment in~\cref{fig:invariance}. 

\subsection{Positivity, exit times, and entropic repulsion} \label{sec:numpositivity}
\begin{figure}[h!]
    \centering
    \begin{minipage}{0.5\textwidth}
    \includegraphics[width=1.1\linewidth]{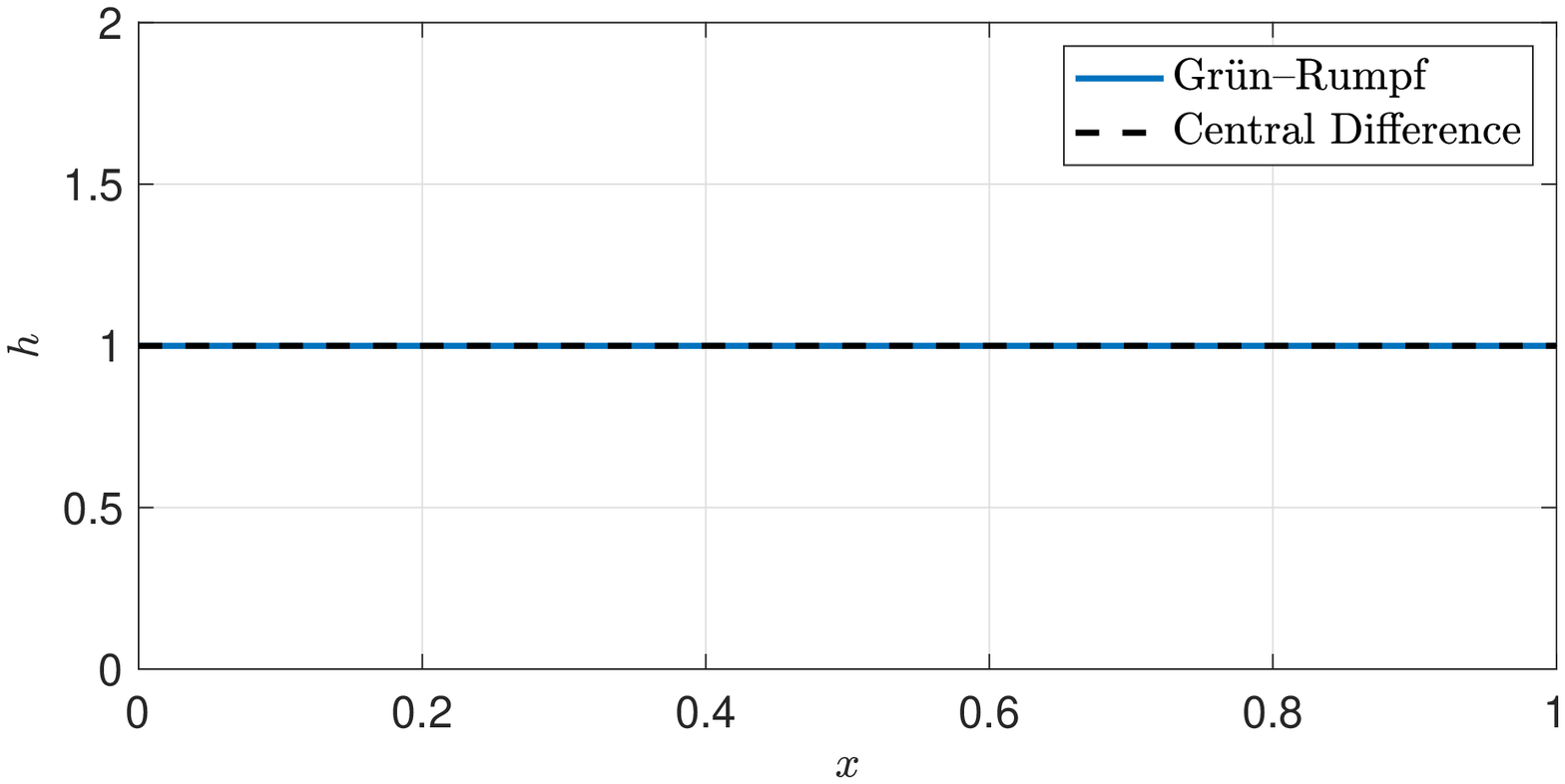}
    \subcaption{$t=0$}
    \end{minipage}%
    \begin{minipage}{0.5\textwidth}
    \includegraphics[width=1.1\linewidth]{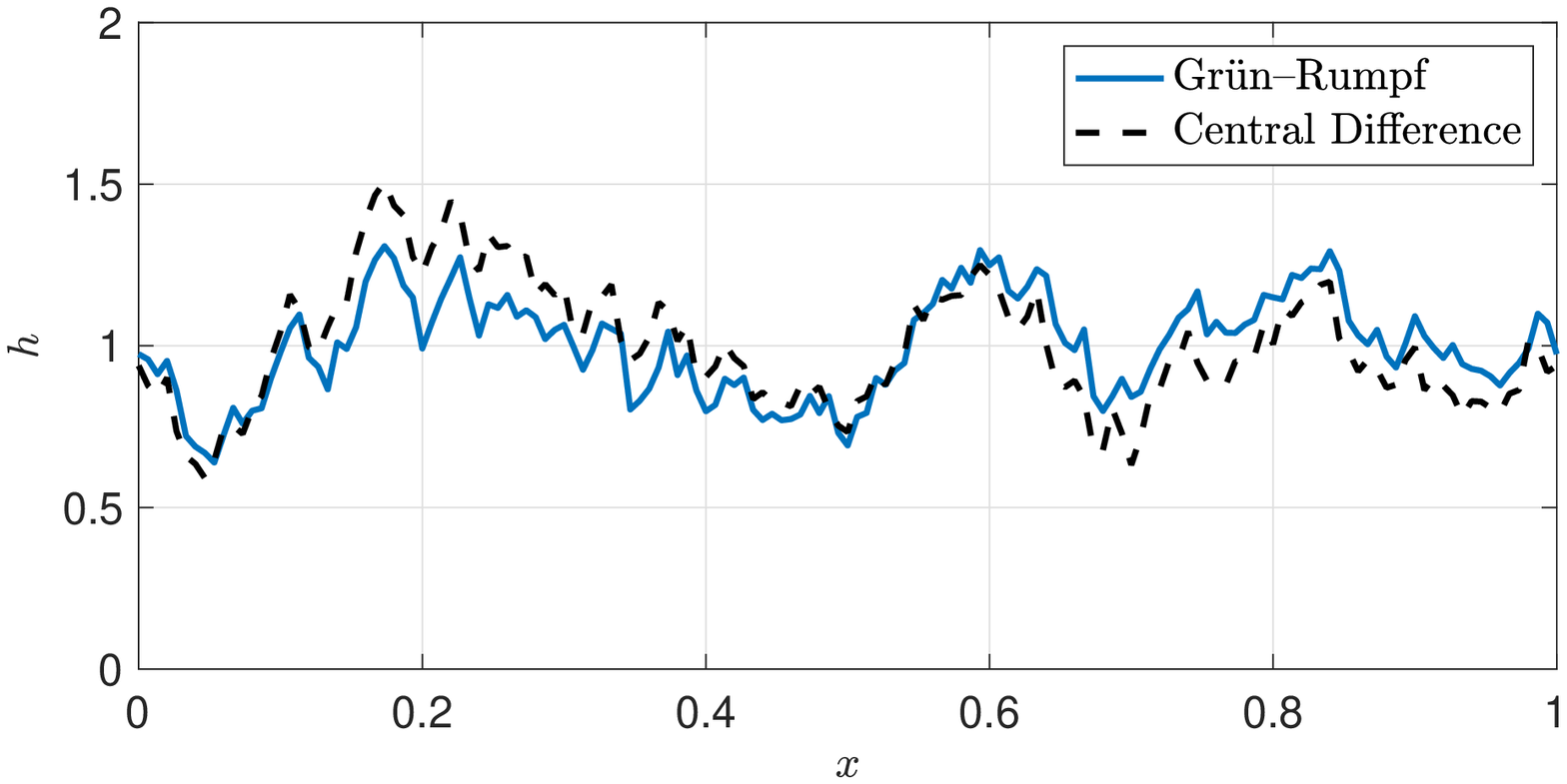}
    \subcaption{ $t\approx 1.64 \times 10^{-4}$}
    \end{minipage}%
    
    \begin{minipage}{0.5\textwidth}
    \includegraphics[width=1.1\linewidth]{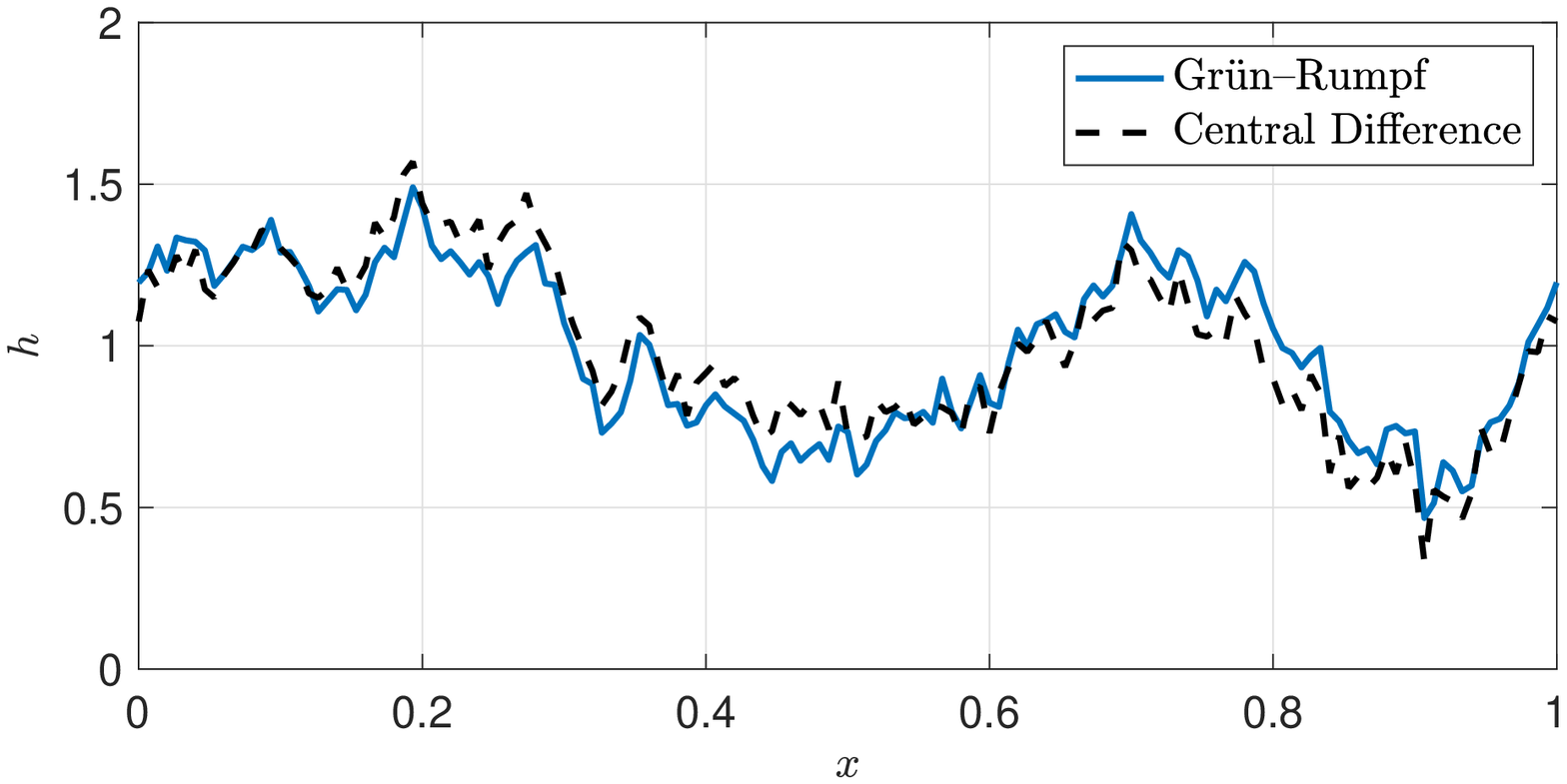}
    \subcaption{$t\approx 3.28 \times 10^{-4}$}
    \end{minipage}%
    \begin{minipage}{0.5\textwidth}
    \includegraphics[width=1.1\linewidth]{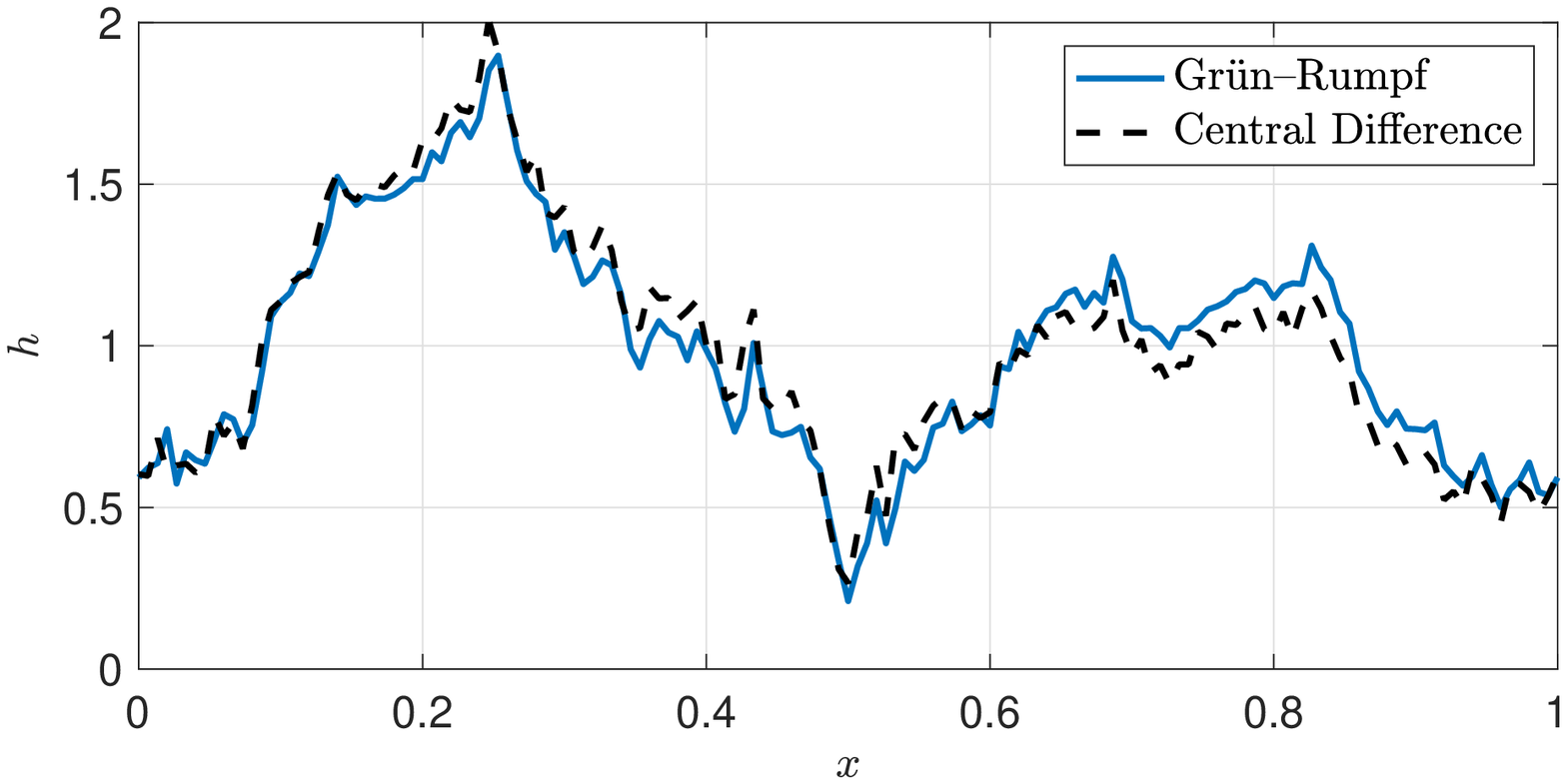}
    \subcaption{$t\approx 4.92 \times 10^{-4}$}
    \end{minipage}%
    
    \begin{minipage}{0.5\textwidth}
    \includegraphics[width=1.1\linewidth]{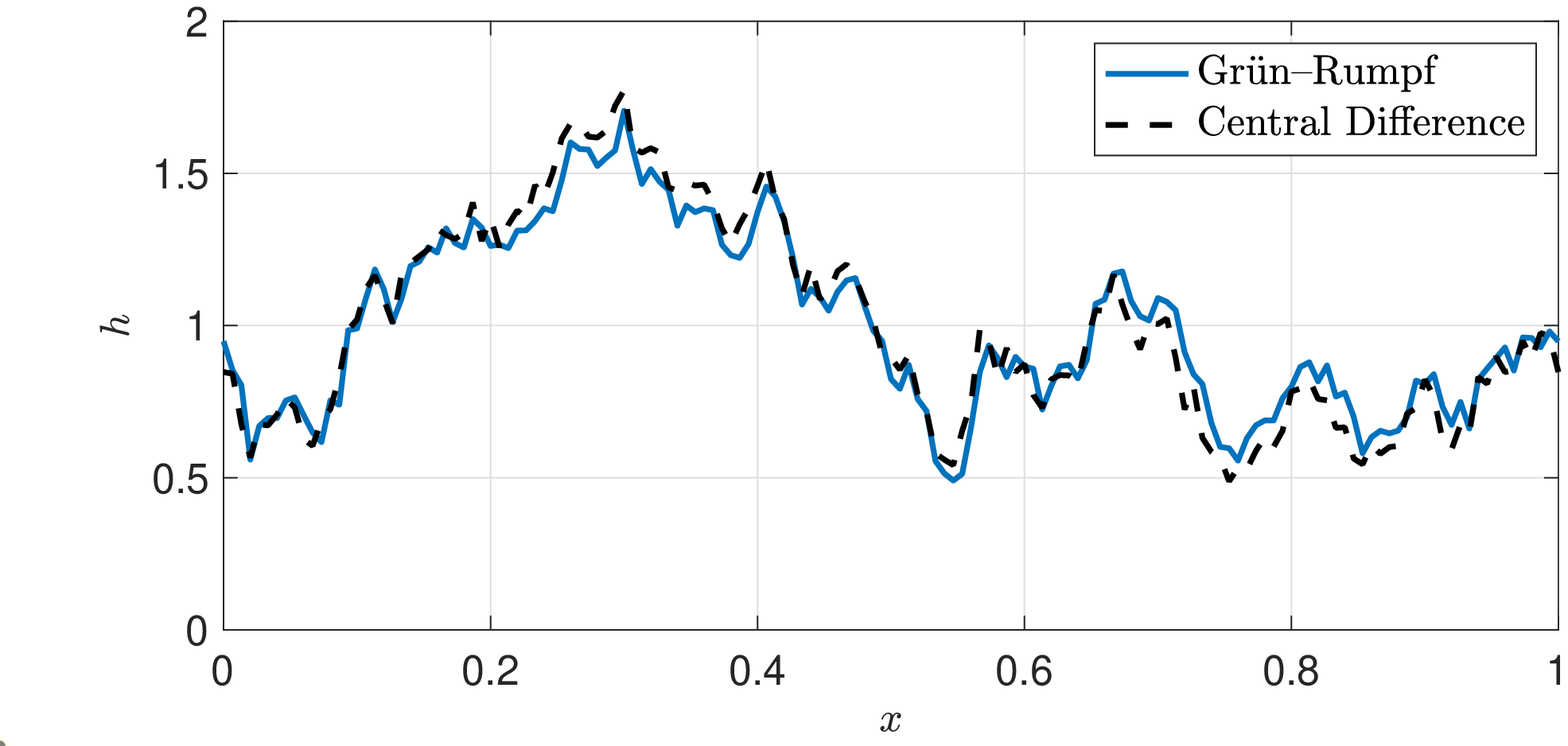}
    \subcaption{$t\approx 6.56 \times 10^{-4}$}
    \end{minipage}%
    \begin{minipage}{0.5\textwidth}
    \includegraphics[width=1.1\linewidth]{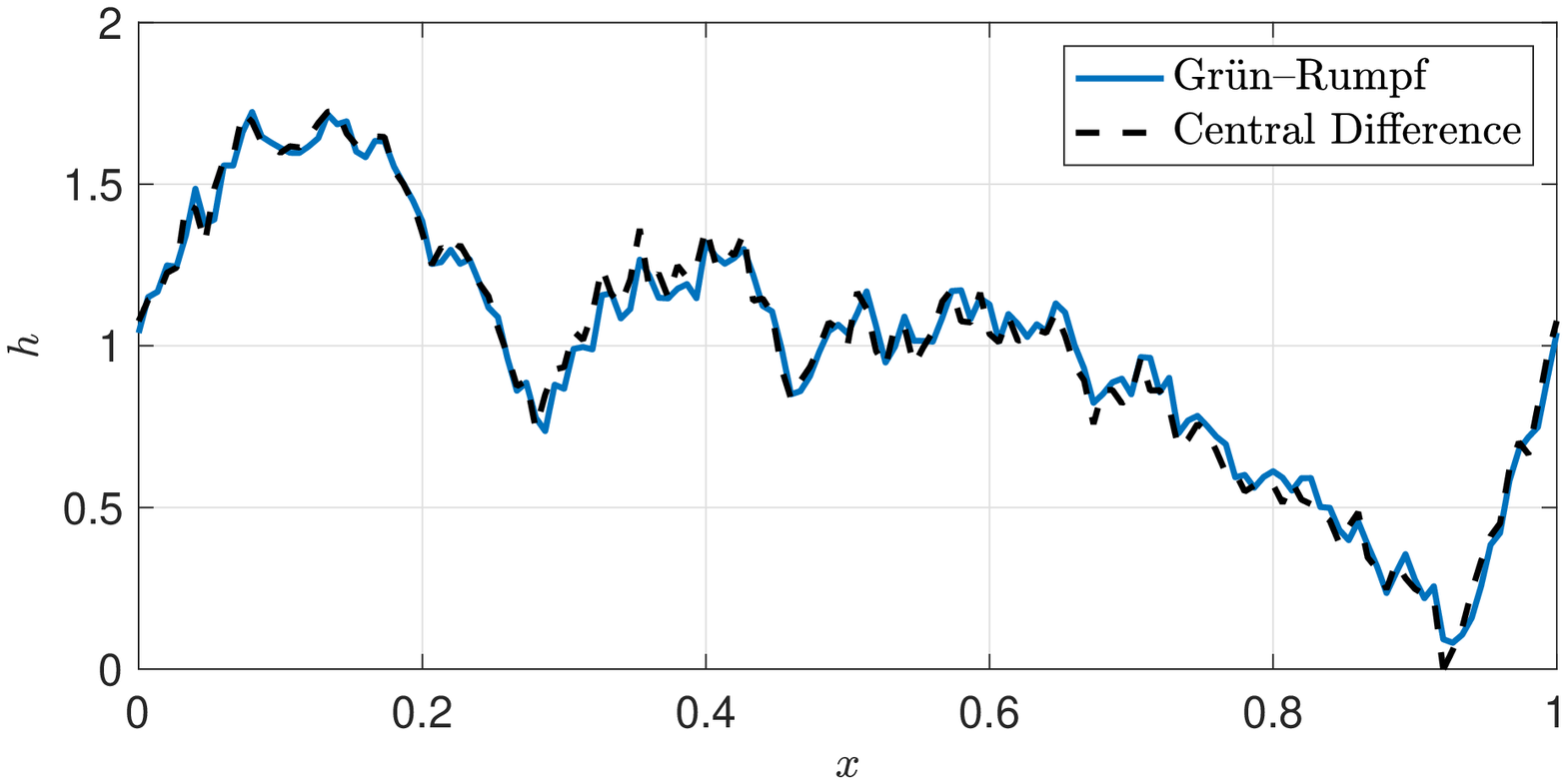}
    \subcaption{$t\approx 8.2 \times 10^{-4}$}
    \end{minipage}%
    \caption{Snapshots of the film height for the \emph{Gr\"un--Rumpf} and central difference discretizations at equally spaced time increments (time goes from (A) $\to$ (F)) for the same realization of the noise. As can be seen from the figures, the central difference discretization touches down (at $t_*\approx 8.2 \times 10^{-4}$, see (F)) while the \emph{Gr\"un--Rumpf} discretization stays away from the boundary. The simulations were performed with the following parameters: $N=150$, $\Delta t =10^{-10}$, $\beta=1$, and  $h_0 \equiv 1$.}
    \label{fig:touchdown}
\end{figure}

As shown in~\cref{stentest}, under appropriate conditions on the initial datum, the \emph{Gr\"un--Rumpf} discretization stays away from the boundary $\partial \mathcal{M}_N$. On the other hand, one expects (see the discussion in~\cref{sec:cdd}) the central difference discretization to touch the boundary with probability 1. We provide some numerical evidence for these features of the two discretizations in~\cref{fig:touchdown}. Indeed, for the same realization of the noise, the\emph{ Gr\"un--Rumpf} discretization stays away from $0$, while the central difference discretization touches down.

\medskip

We can provide stronger numerical evidence for the fact that the central difference discretization touches down by computing the mean exit time from $\mathcal{M}_N$ of the associated process. If this quantity is finite, this implies that the central difference discretization leaves $\mathcal{M}_N$, i.e. touches down, almost surely. Let $h^{h_0}_t$ be a solution of the central difference discretization of the stochastic thin-film equation \eqref{dSTFEnaive} with initial condition $h_0 \in \mathcal{M}_N$. Then, we define the exit time  of $h_t^{h_0}$ from the interior to be
\begin{align}
    \tau(h_0) := \inf\set*{t \geq 0 : h^{h_0}_t \notin \mathcal{M}_N}.
\end{align}
We take $h_0 \equiv 1$ and set $\tau := \tau(1)$. Then, we sample $\tau$ by running a Monte-Carlo simulation of \eqref{dSTFEnaive}  according to the time-stepping scheme described in~\eqref{timestepping}. This time, instead of imposing reflecting boundary conditions, we stop the simulation as soon as we reach the boundary $\partial \mathcal{M}_N$, i.e. when the film touches down.
~\cref{fig:my_label2} shows the behavior of the mean exit time as $N$ grows. In particular, it seems that the mean exit time is finite and remains bounded as $N$ tends to infinity.
\begin{figure}[h!]
    \centering
    \includegraphics[scale=0.45]{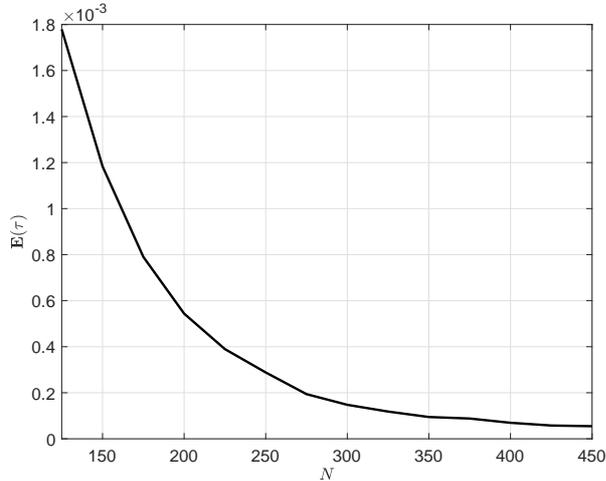}
    \caption{The dependence of the mean exit time  of the central difference discretization on $N$. The simulations were performed with the following parameters: $\Delta t=10^{-10}$, $\beta=1$, $M=100$, and $h_0 \equiv 1$.}
    \label{fig:my_label2}
\end{figure}
In the final part of this subsection, we study numerically the positivity properties of the continuum conservative Brownian excursion $\nu$, i.e. its entropic repulsion. As has been mentioned before, our conservative Brownian excursion is qualitatively similar to the classical Brownian excursion from stochastic analysis. Moreover, it is known that the classical Brownian excursion features an entropic repulsion, in the sense that the single point distribution decays to $0$ at $0$. In fact, one can compute the single point statistics for the classical Brownian excursion $\bra*{Y_t}_{t \geq 0}$ explicitly (cf. \cite[p.463]{yorrevuz}): For fixed $t \geq 0$ and $x, y > 0$ such that $Y_0 = x$ and $Y_T = y$ a.s., it takes the form
\begin{align}
    p^{x, y}_t(z) = \frac{T}{t(T-t)} z \frac{I_{\frac{1}{2}}(\frac{xz}{t}) I_{\frac{1}{2}}(\frac{zy}{T-t})}{I_{\frac{1}{2}}(\frac{xy}{T})} e^{-\frac{x^2 + z^2}{2t}} e^{-\frac{z^2 + y^2}{2(T-t)}} e^{\frac{x^2 + y^2}{2T}}
\end{align}
where $I_{\frac{1}{2}}$ is the modified Bessel function of the first kind of order $\frac{1}{2}$. Notice that for $z \ll 1$, it holds that $I_{\frac{1}{2}}(z) \sim z^{\frac{1}{2}}$.  
From the above expression, it is clear that the distribution decays to $0$ quadratically as $z \to 0$.  In ~\cref{fig:entropicrepulsion} we see that the single point distribution of our conservative Brownian excursion for $N \gg 1$ also exhibits quadratic decay at $0$.
\begin{figure}[h!]
    \centering
    \includegraphics[scale=0.5]{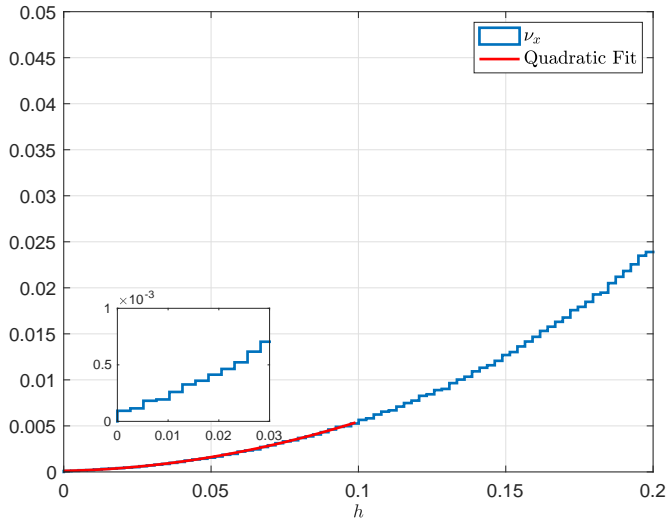}
    \caption{The entropic repulsion of the continuum conservative Brownian excursion $\nu$ as observed through the single point statistics of $\nu_N$ for $N$ large ($=2000$) obtained from $M=2\times 10^5$ samples. The single point distribution (in blue) decays quadratically as $h \to 0$ as can be seen by comparing it to the fitted curve (in red) $p(h) \approx 0.4704 \times h^2$. The zoomed-in version of the histogram exhibits the fact that entropic repulsion is a feature of the continuum invariant measure; for finite but large $N$ the single point density is positive but small at $0$.}
    \label{fig:entropicrepulsion}
\end{figure}
\subsection{Convergence of the two discretizations}\label{conv2}

\begin{figure}[h!]
    \centering
    \begin{minipage}{0.5\textwidth}
    \centering
    \includegraphics[width=1.1\linewidth]{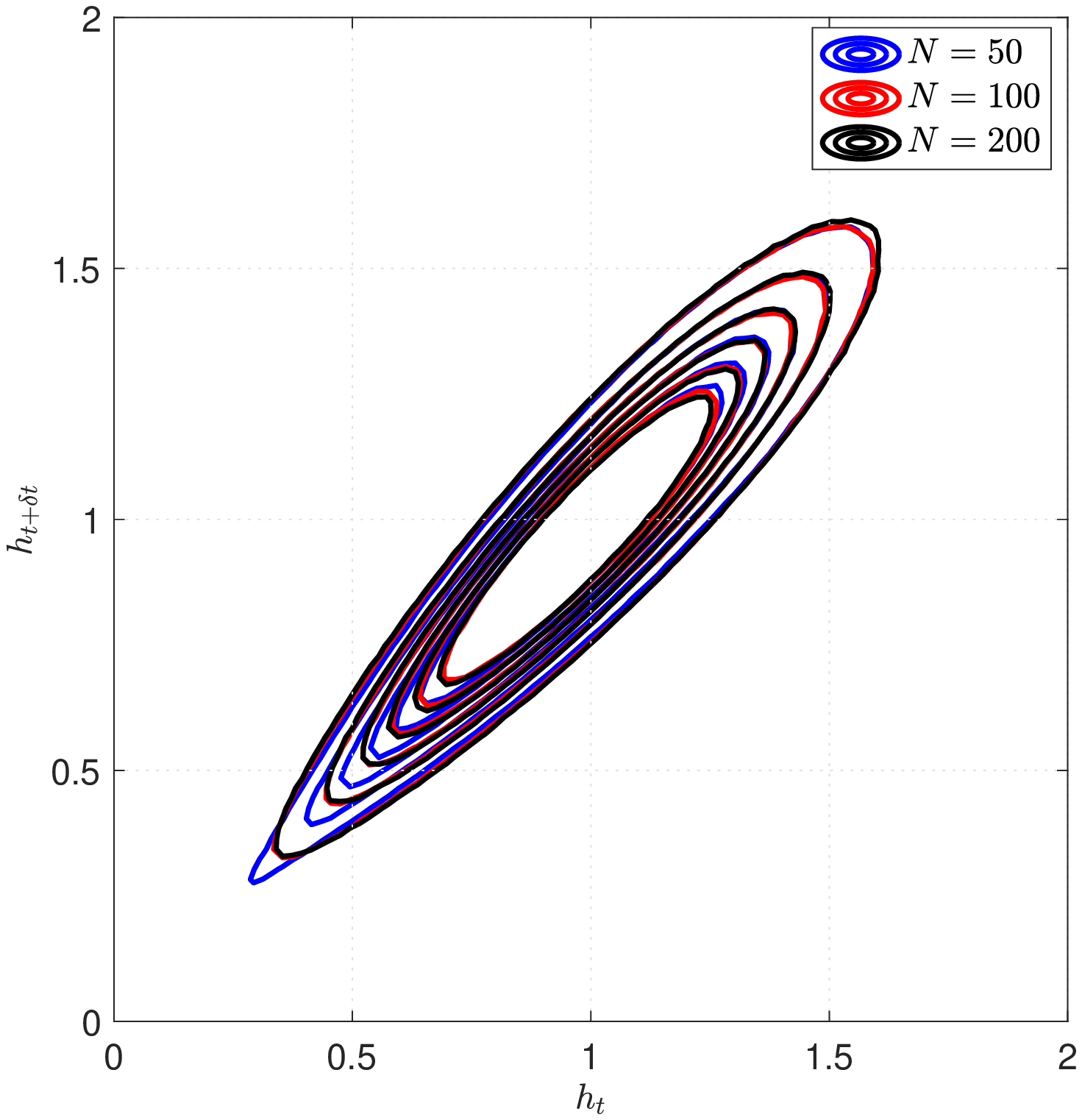}
    \subcaption{}
    \end{minipage}%
    \begin{minipage}{0.5\textwidth}
    \centering
    \includegraphics[width=1.1\linewidth]{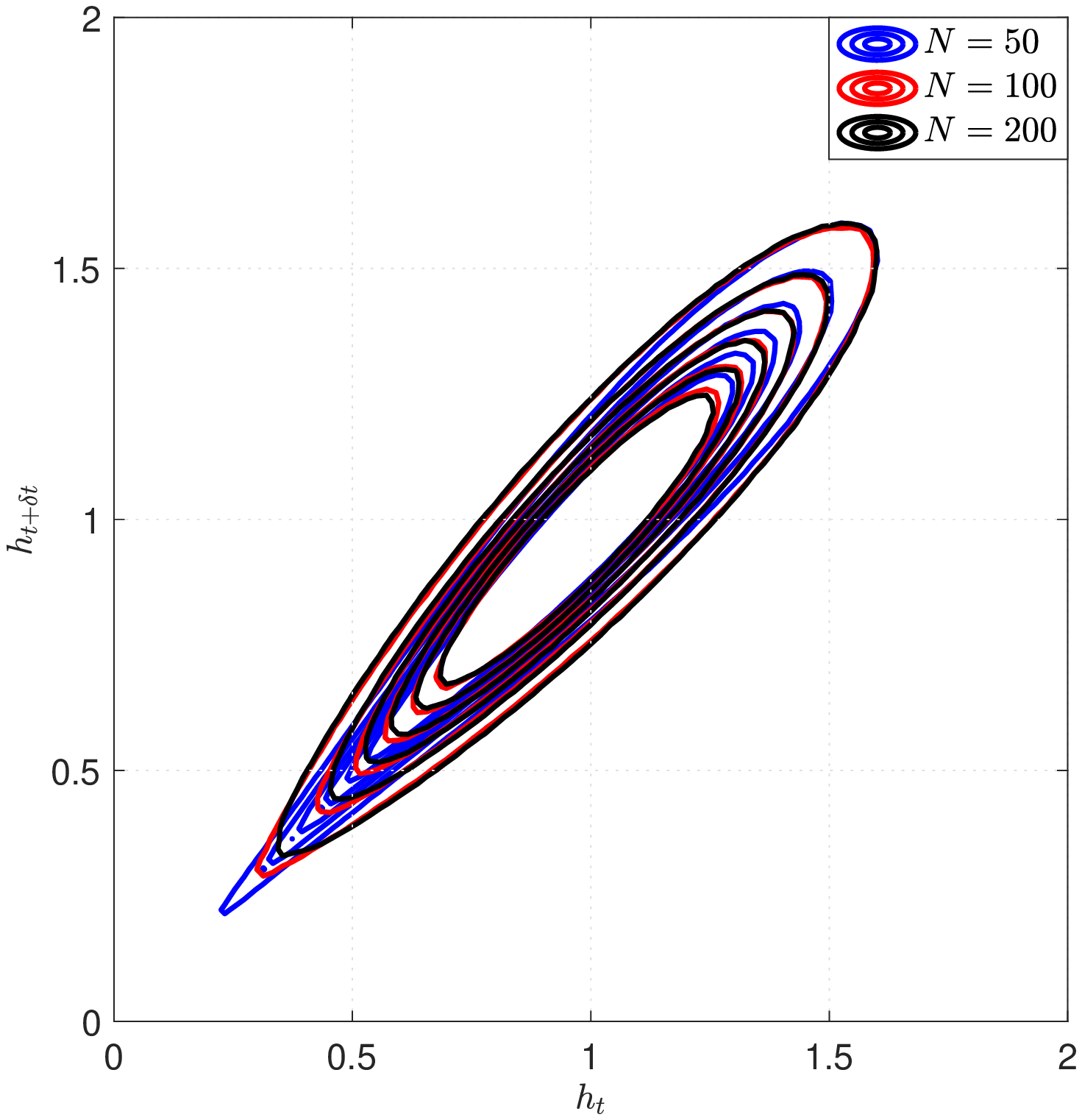}
    \subcaption{}
    \end{minipage}
    \caption{Level sets of the two-point (in time) distributions, i.e. the joint  distributions of $h_t$ and $h_{t + \delta t}$, for (A) the \emph{Gr\"un--Rumpf} and (B) the central difference  discretizations for $N=50,100,200$.}
    \label{fig:pathspace1}
\end{figure}

\begin{figure}[h!]
    \centering
    \begin{minipage}{0.5\textwidth}
    \centering
    \includegraphics[width=1.1\linewidth]{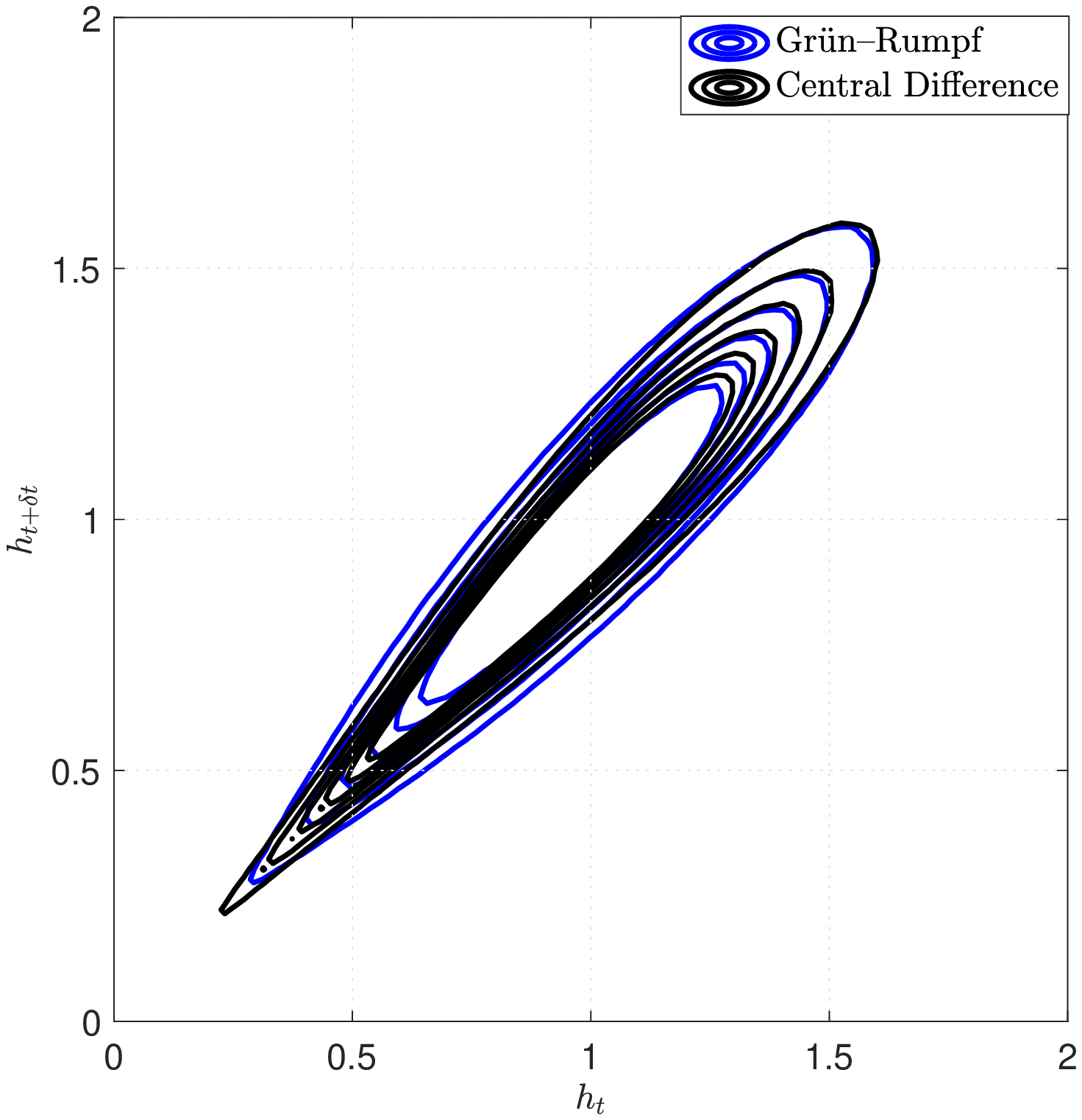}
    \subcaption{}
    \end{minipage}%
    \begin{minipage}{0.5\textwidth}
    \centering
    \includegraphics[width=1.1\linewidth]{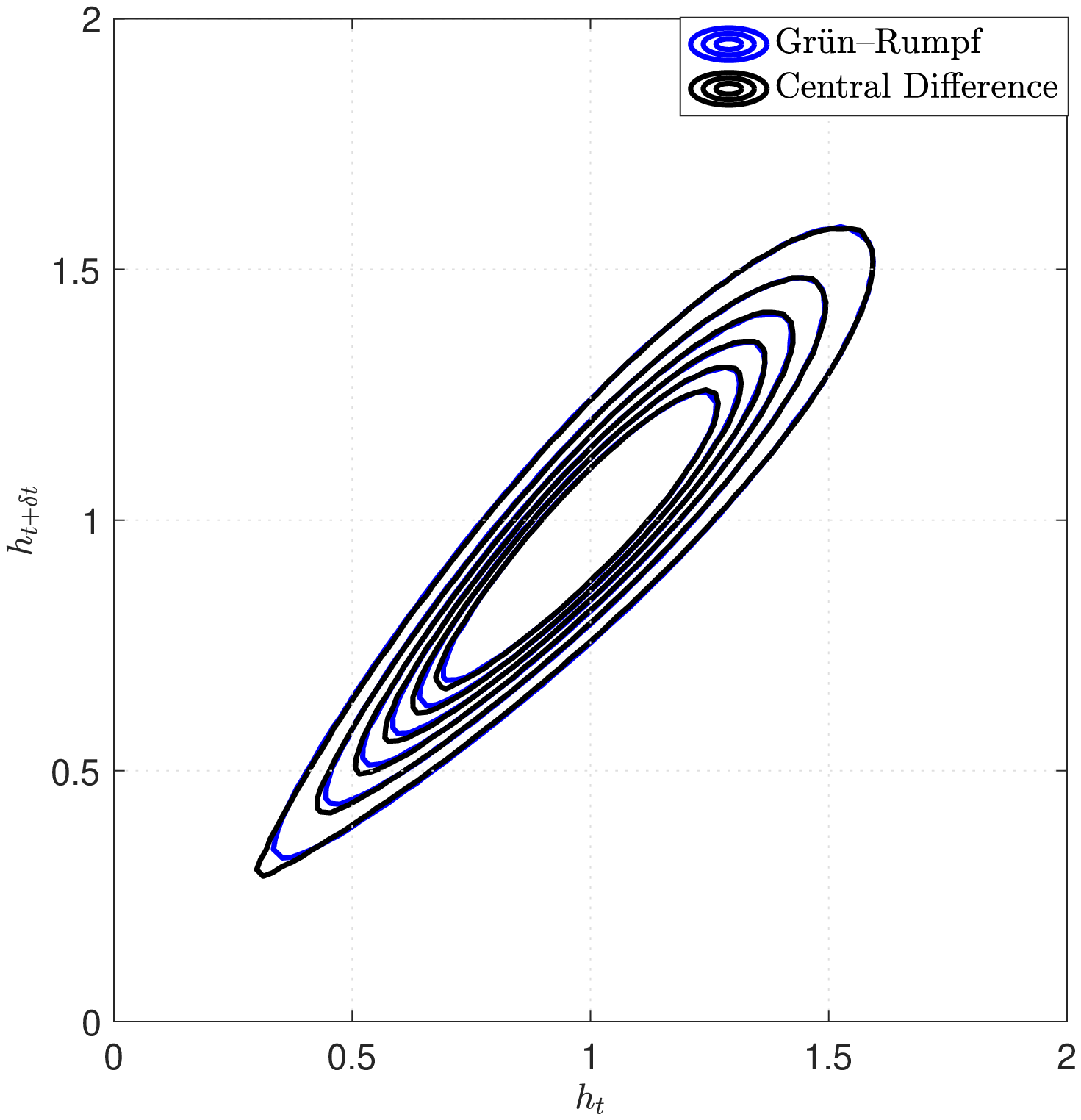}
    \subcaption{}
    \end{minipage}
        \begin{minipage}{0.5\textwidth}
    \centering
    \includegraphics[width=1.1\linewidth]{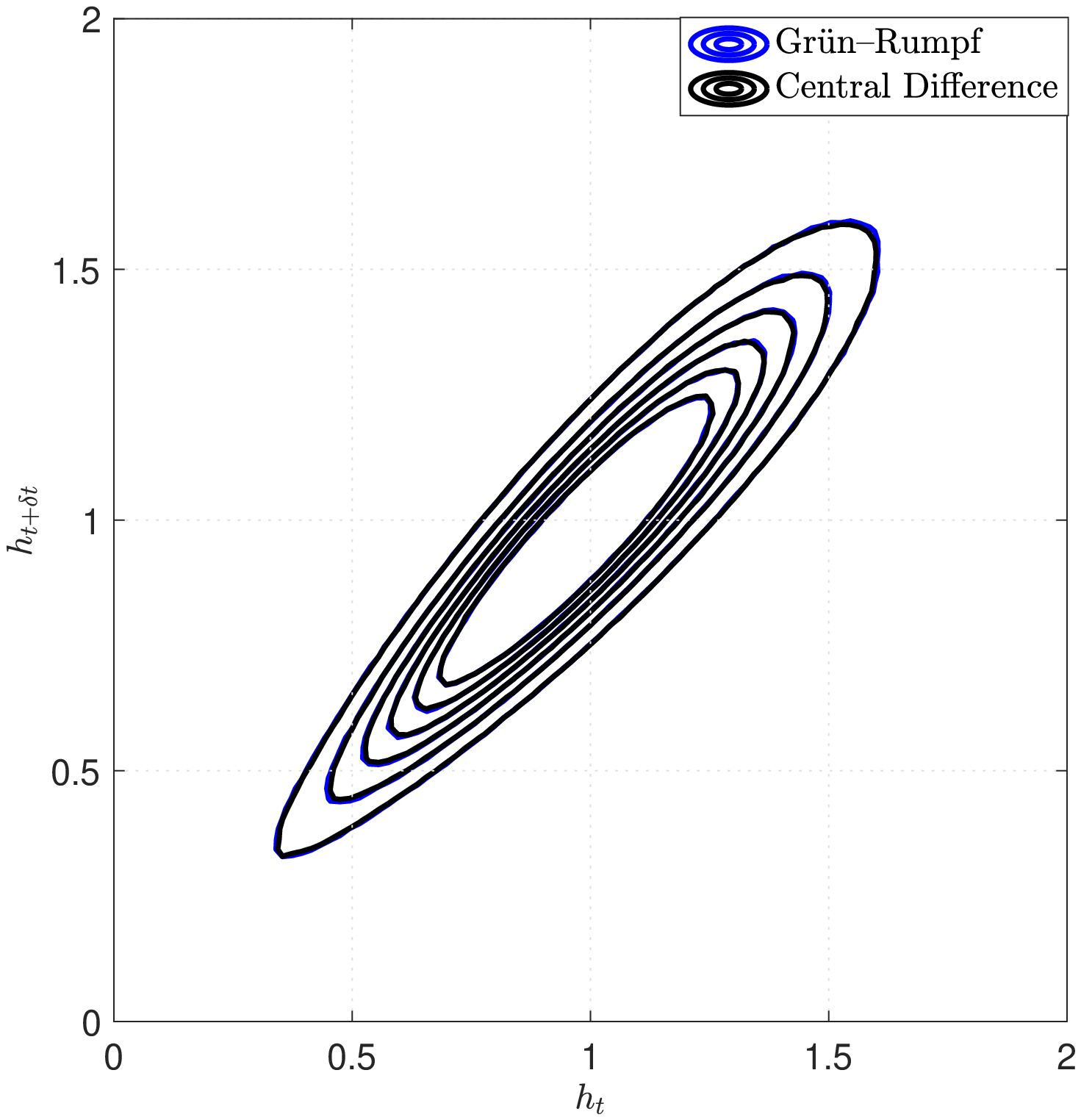}
    \subcaption{}
    \end{minipage}
    \caption{Comparisons of the level sets of the two-point (in time) distributions of the the \emph{Gr\"un--Rumpf} and the central difference  discretizations  for (A) $N=50$, (B) $N=100$, and (C) $N=200$.}
    \label{fig:pathspace2}
\end{figure}

As mentioned earlier in the paper, two different discretizations of a singular SPDE can converge to different limiting objects (cf. \cite{HairerMaasAoP2012}). Thus, it would not be unreasonable to expect that the \emph{Gr\"un--Rumpf} and central difference discretizations of the thin-film equation with thermal noise have different continuum limits. However, numerical evidence seems to indicate that, at least started at equilibrium, the path space measures of the two discretizations converge to the same object. 

\medskip

We check this by sampling from $\nu_N$ using~\cref{nualgo} and then integrating in time with $h_0 \sim \nu_N$ to some final time $T$. Repeating this process, we obtain a large number, $M \gg 1$, of samples. We can then  compute the two-point (in time) distributions of both discretizations, i.e. the joint law of $h_t$ and $h_{t + \delta t}$ for some $\Delta t \ll \delta t \ll T$, for different values of $N$.  One then observes that, as $N$ increases, the two discretizations seem to converge to each other. Note that since we start our simulations at the invariant measure and the underlying process is reversible the choice of $t \geq 0$ is irrelevant. We present the results of these experiments in~\cref{fig:pathspace1,fig:pathspace2}.

\section*{Acknowledgements}

This work was funded by the Deutsche Forschungsgemeinschaft (DFG, German Research Foundation) – SFB 1283/2 2021 – 317210226.

\appendix 

\section{The thin-film equation with linear mobility in Lagrangian coordinates}
% It is well known (cf. \cite[Theorem 2.18, p.74]{CV03}) that for linear mobility, i.e. $m = 1$, under the change of variables $h \mapsto X$ where $X$ is the inverse distribution function of $h$, i.e.
% \begin{align}\label{Xdef}
%     z = \int_0^{X(z)} h(x) \dx{x}
% \end{align}
% we have that
% \begin{align}
%     g_h\bra*{\dot{h}, \dot{h}} = \int_{0}^1 \bra*{\dot{X}}^2 \dx z =: g_X\bra*{\dot{X}, \dot{X}}.
% \end{align}
Let 
\begin{align}\label{Xdef}
    z = \int_0^{X(z)} h(x) \dx{x}
\end{align}
then taking the derivative twice with respect to $z$ of \eqref{Xdef} yields
\begin{align}\label{formulah}
    1 = h(X(z)) \frac{\dx}{\dx z} X(z)
\end{align}
as well as
\begin{align}\label{formuladxh}
    0 = \partial_xh(X(z)) \bra*{\frac{\dx}{\dx z}X(z)}^2 + h(X(z)) \frac{\dx^2}{\dx z^2} X(z).
\end{align}
Multiplying \eqref{formuladxh} with $h(X(z))^2$ and invoking \eqref{formulah} we end up with
\begin{align}\label{formuladxh2}
    \partial_x h(X(z)) = - h(X(z))^3  \frac{\dx^2}{\dx z^2} X(z).
\end{align}
Hence we compute for the Dirichlet energy
\begin{align}
    E(h)&:= \frac{1}{2} \int_0^1 (\partial_x h)^2 \dx x = \frac{1}{2} \int_0^1 (\partial_x h(X(z)))^2 \frac{\dx}{\dx z}X(z) \dx z \\
    &\stackrel{\eqref{formuladxh2}}{=} \frac{1}{2} \int_0^1 \bra*{h(X(z))^3 \frac{\dx^2}{\dx z^2} X(z)}^2 \frac{\dx}{\dx z}X(z) \dx z \\
    &\stackrel{\eqref{formulah}}{=} \frac{1}{2} \int_0^1\frac{\bra*{\frac{\dx^2}{\dx z^2} X(z)}^2}{\bra*{\frac{\dx}{\dx z}X(z)}^5} \dx z \\
    &=: E(X).
\end{align}
Moreover, for some $\delta X$ we compute
\begin{align}
    \mathrm{diff}E|_X.\delta X &= \frac{1}{2} \int_0^1 2 \frac{\frac{\dx^2}{\dx z^2} X(z)}{\bra*{\frac{\dx}{\dx z}X(z)}^5} \frac{\dx^2}{\dx z^2}(\delta X(z)) - 5 \frac{\bra*{\frac{\dx^2}{\dx z^2} X(z)}^2}{\bra*{\frac{\dx}{\dx z}X(z)}^6} \frac{\dx}{\dx z}(\delta X(z)) \dx z \\
    &= \int_0^1 \bra*{\frac{\dx^2}{\dx z^2}\bra*{\frac{\frac{\dx^2}{\dx z^2} X(z)}{\bra*{\frac{\dx}{\dx z}X(z)}^5}} + \frac{5}{2} \frac{\dx}{\dx z}\bra*{\frac{\bra*{\frac{\dx^2}{\dx z^2} X(z)}^2}{\bra*{\frac{\dx}{\dx z}X(z)}^6}}} \delta X(z) \dx z.
\end{align}
This, as usual, gives rise to the $L^2$-gradient flow
\begin{align}
    \partial_t X &= - \partial_z^2\bra*{\frac{\partial_z^2 X}{\bra*{\partial_z X}^5}} - \frac{5}{2} \partial_z \bra*{\frac{\bra*{\partial_z^2 X}^2}{\bra*{\partial_z X}^6}} \\
    &= \frac{1}{4} \partial_z^3 \bra*{\partial_z X}^{-4} - \frac{5}{8} \partial_z \bra*{\partial_z \bra*{\partial_z X}^{-2}}^2.
\end{align}
% Hence the gradient flow
% \begin{align}
%     \partial_t h = - \partial_x \bra*{h \partial_x^3h} = -\nabla E(h)
% \end{align}
% can be written as the $L^2$-gradient flow
% \begin{align}
%     \partial_t X = - \partial_z^2\bra*{\frac{\partial_z^2 X}{\bra*{\partial_z X}^5}} - \frac{5}{2} \partial_z \bra*{\frac{\bra*{\partial_z^2 X}^2}{\bra*{\partial_z X}^6}}
% \end{align}
% and by the fluctuation dissipation theorem we end up with
% \begin{align}\label{lagstfe}
%     \partial_t X &= - \partial_z^2\bra*{\frac{\partial_z^2 X}{\bra*{\partial_z X}^5}} - \frac{5}{2} \partial_z \bra*{\frac{\bra*{\partial_z^2 X}^2}{\bra*{\partial_z X}^6}} + \xi \\
%     &= \frac{1}{4} \partial_z^3 \bra*{\partial_z X}^{-4} - \frac{5}{8} \partial_z \bra*{\partial_z \bra*{\partial_z X}^{-2}}^2 + \xi
% \end{align}
% where $\xi$ is space-time white noise. The first term on the right hand side of \eqref{lagstfe} is clearly well-defined. For the second term on the right hand side of \eqref{lagstfe}, we notice that it is a ''KPZ''-like term followed by a derivative. Since the renormalization constant for the KPZ equation does not depend on the space variable (cf. \cite[Thm. 15.1, p. 223]{FH14}) we might expect that in our case it is killed off by the outer derivative. This heuristic is confirmed by numerical simulations even for $m=3$.

\section{Computing the change of coordinates}
% By defining $\zeta = \bar{\zeta} \circ \Psi$, $f_t = \bar{f}_t \circ \Psi$ as well as $\bar{h} = \Psi(h)$, we compute using the chain rule
% \begin{align}\label{compco1}
%     g_h\bra*{\nabla \zeta(h), \nabla f_t(h)} &= \mathrm{diff}\zeta|_h.\nabla f_t(h) \\ 
%     &= \partial_{\al} \bar{\zeta}(\bar{h}) \mathrm{diff}\varphi^{\al}|_h.\nabla f_t(h) \\
%     &= \partial_{\al} \bar{\zeta}(\bar{h}) \mathrm{diff}f_t|_h.\nabla \varphi^{\al}(h) \\
%     &= \partial_{\al} \bar{\zeta}(\bar{h}) \bar{\partial}_{\al'} \bar{f}_t(\bar{h}) \mathrm{diff}\varphi^{\al'}|_h.\nabla \varphi^{\al}(h) \\
%     &= \partial_{\al} \bar{\zeta}(\bar{h}) \partial_{\al'} \bar{f}_t(\bar{h}) g^{\al \al'}(\bar{h}).
% \end{align}
\subsection{The dual metric in coordinates}
Let the setting be as in the beginning of \cref{coord}. As usual, we define the musical isomorphism via
\begin{align}
    T^{*}\mathcal{M} \to T\mathcal{M}, \omega \to \omega^{\sharp}
\end{align}
where 
\begin{align}\label{riesz}
    \omega.\dot{h} = g\bra*{\omega^{\sharp}, \dot{h}}
\end{align}
for all $\dot{h} \in T\mathcal{M}$.
This gives rise to the dual metric $g'$ on $T^{*}\mathcal{M} \otimes T^{*}\mathcal{M}$ via
\begin{align}\label{dualmetric}
    g'\bra*{\omega, \omega'} := g\bra*{\omega^{\sharp}, \omega'^{\sharp}}
\end{align}
for all $\omega, \omega' \in T^{*}\mathcal{M}$. Let $g'^{\al \al'}$ and $g_{\al \al'}$ be the representation of $g'$ respectively $g$ in the coordinates $\bra*{\varphi^{\al}}_{\al}$ and let $\ell, \ell'$ be covectors and $\tau, \tau'$ be vectors that are related by 
\begin{align}\label{rel}
    \ell_{\al} = g_{\al \al'} \tau^{\al'}, \quad \ell'_{\al} = g_{\al \al'} \tau'^{\al'}.
\end{align}
Then by definition of \eqref{dualmetric} and by \eqref{rel}, we have
\begin{align}
     g_{\al \al'} \tau^{\al} \tau'^{\al'}  = g'^{\al \al'} \ell_{\al} \ell'_{\al'}
\end{align}
and thus we see that
$g'^{\al \al''} g_{\al'' \al'} = \delta_{\al \al'}$
such that finally
\begin{align}\label{ident}
    g'^{\al \al'} = g^{\al \al'}.
\end{align}
Moreover, by \eqref{dualmetric} and \eqref{ident}, we see that for $\zeta, \zeta'$ sufficiently smooth functions on $\mathcal{M}$ we have
\begin{align}\label{gradpairing}
    g\bra*{\nabla \zeta, \nabla \zeta'} = g'\bra*{\mathrm{diff} \zeta, \mathrm{diff} \zeta'} = g^{\al \al'} \partial_{\al} \zeta \partial_{\al'} \zeta'.
\end{align}
\subsection{Explicit formulae for partial derivatives}\label{parder}
For some function $f: \mathcal{M}_N \to \R$ we have
\begin{align}
     \partial_{i} f(h) = \frac{\dx}{\dx{\eps}}\Bigr|_{\eps = 0} f(h + \eps \hat{\varphi}_i)
\end{align}
as well as
\begin{align}
    \partial_{\al} f(h) = \frac{\dx}{\dx{\eps}}\Bigr|_{\eps = 0} f(h + \eps \bar{\varphi}_{\al}).
\end{align}
\section{Computation of the numerical mobility} \label{comp}
We restrict ourselves to mobility functions of the form $M(h) = h^m$. Then we compute
\begin{align}
    \grm_{\al \al}(h) &= \frac{1}{m-1} \frac{\bra*{h^{\al-}}^{1-m} - \bra*{h^{\al + }}^{1-m}}{h^{\al + } - h^{\al-}} \\
    &= \frac{1}{m-1} \frac{1}{h^{\al + } - h^{\al-}} \frac{\bra*{h^{\al + }}^{m-1} - \bra*{h^{\al-}}^{m-1}}{\bra*{h^{\al-}}^{m-1} \bra*{h^{\al + }}^{m-1}} \\
    &= \frac{1}{m-1} \frac{1}{h^{\al + } - h^{\al-}} \frac{\sum_{k=1}^{\infty} \binom{m-1}{k} \bra*{h_{\al-}}^{m-1-k} (h^{\al+} -h^{\al-})^ k}{\bra*{h^{\al-}}^{m-1} \bra*{h^{\al + }}^{m-1}} \\
    &= \frac{1}{m-1}\frac{\sum_{k=1}^{\infty} \binom{m-1}{k} \bra*{h^{\al-}}^{m-1-k} (h^{\al+} -h^{\al-})^{k-1}}{\bra*{h^{\al-}}^{m-1} \bra*{h^{\al + }}^{m-1}}.
\end{align}
In particular, this yields for $m=3$
\begin{align}
    \grm_{\al \al}(h) = \frac{1}{2} \frac{h^{\al-} + h^{\al + }}{\bra*{h^{\al-}}^2 \bra*{h^{\al+}}^2}
\end{align}
and hence
\begin{align}\label{metriccomp}
    \grm^{\al \al}(h) = 2 \frac{\bra*{h^{\al-}}^2 \bra*{h^{\al+}}^2}{h^{\al-} + h^{\al + }}.
\end{align}
Moreover, for the It\^o-correction term we are left with computing
% \begin{align}
%     \partial_i\bra*{A^T g^{-1}(h) A}^i \stackrel{\eqref{hattobar}}{=} \at^i_{\al'} \partial_{\al} g^{\al \al'}(h)
% \end{align}
% and by the chain rule, it holds that
\begin{align}\label{chainruleitocor}
    \partial_{\al'} \grm^{\al' \al}(h) = - g^{\gamma \gamma'}(h)\partial_{\gamma} \grm_{\gamma' \al'}(h)g^{\al'\al}(h)
\end{align}
and using \eqref{parder} we compute the derivative of the metric tensor via
\begin{align}\label{dergrm}
    \partial_{\gamma} \grm_{\gamma' \al'} (h) = - \delta_{\gamma' \al'}  \dashint_{I_{\al'}} \frac{M'(h) }{M(h)^2} \bar{\varphi}_{\gamma} \dx{x}.
\end{align}
By the diagonal structure of $g(h)$ it is enough to compute
\begin{align}\label{itocorcomp}
    \partial_{\al} \grm_{\al \al}(h) =  N^{\frac{3}{2}} \frac{ \frac{1}{M(h^{\al +})} + \frac{1}{M(h^{\al-})} - 2 \grm_{\al \al}(h)}{h^{\al + } - h^{\al-}}
\end{align}
where we used integration by parts which in the case $m = 3$ yields
\begin{align}
    \partial_{\al} \grm_{\al \al}(h) &= N^{\frac{3}{2}} \frac{\bra*{h^{\al-}}^{-3} + \bra*{h^{\al + }}^{-3} - \frac{h^{\al-} + h^{\al + }}{\bra*{h^{\al-}}^2 \bra*{h^{\al+}}^2}}{h^{\al + } - h^{\al-}} \\
    &= N^{\frac{3}{2}}  \frac{\bra*{h^{\al-}}^{-2}(\bra*{h^{\al-}}^{-1} - \bra*{h^{\al + }}^{-1}) + \bra*{h^{\al + }}^{-2} (\bra*{h^{\al + }}^{-1} - \bra*{h^{\al-}}^{-1})}{h^{\al + } - h^{\al-}}\\
    &= N^{\frac{3}{2}} \bra*{\frac{1}{\bra*{h^{\al-}}^3 h^{\al + }} - \frac{1}{\bra*{h^{\al + }}^3 h^{\al-}}}. \\
\end{align}
Hence, for $m=3$, we have
\begin{align} \label{itocnum}
    \partial_{\al'} g^{\al' \al}(h) = N^{\frac{3}{2}} 4 h^i h^{i+1} \frac{h^i - h^{i+1}}{h^i + h^{i+1}}.
\end{align}

\section{An integral inequality}
\begin{lem}\label{lemintineq}
    Let u(t) be positive and bounded for $t \in \pra*{0, T}$. Let $0 \leq \gamma <\infty$. Then, if
    \begin{align}\label{gamma1}
        u(t) \leq u(0) + C t + C\int_0^t u^{\gamma}(s) \dx s
    \end{align}
    for some constant $C$, we have for $\gamma = 1$
    \begin{align}
        u(t) \leq \bra*{u(0) + 1}e^{Ct}
    \end{align}
    and for $\gamma \neq 1$
    \begin{align}
        u(t) \leq\bra*{\bra*{u(0) + CT}^{1-\gamma} + (1-\gamma) C t}^{\frac{1}{1-\gamma}}.
    \end{align}
    % for some constant $C_{\gamma}$ depending on $\gamma$ and $C$.
\end{lem}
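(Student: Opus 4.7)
The plan is a classical reduction to a scalar ODE comparison via an auxiliary function, treating $\gamma=1$ and $\gamma\neq 1$ separately.

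For $\gamma\neq 1$, I would introduce
\begin{align}
w(t) := u(0) + CT + C\int_0^t u^\gamma(s)\,ds,
\end{align}
which, thanks to $Ct \leq CT$, majorizes $u$ on $[0,T]$ by hypothesis while converting the ``$+Ct$'' error term into an initial-value shift. Then $w$ is of class $C^1$, non-decreasing, has $w(0) = u(0)+CT > 0$, and satisfies the differential inequality $w'(t) = Cu^\gamma(t) \leq Cw^\gamma(t)$ because $u \leq w$. Dividing by $w^\gamma$ (which is strictly positive) puts the inequality in separable form $\frac{d}{dt}w^{1-\gamma} \leq (1-\gamma)C$ when $\gamma<1$ and $\frac{d}{dt}w^{1-\gamma} \geq (1-\gamma)C$ when $\gamma>1$; integrating on $[0,t]$ and then raising to the $\frac{1}{1-\gamma}$ power, which is monotone increasing for $\gamma<1$ and monotone decreasing for $\gamma>1$, yields in both subcases the desired bound
\begin{align}
u(t) \leq w(t) \leq \bigl((u(0)+CT)^{1-\gamma} + (1-\gamma)Ct\bigr)^{\frac{1}{1-\gamma}}.
\end{align}
The sign flip in the monotonicity of $x \mapsto x^{\frac{1}{1-\gamma}}$ precisely compensates the sign flip in the integrated inequality, which is why a single formula covers both ranges of $\gamma$.

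For $\gamma = 1$, I would instead take the tighter auxiliary $v(t) := u(0) + Ct + C\int_0^t u(s)\,ds$, which satisfies $v'(t) = C + Cu(t) \leq C(1+v(t))$. Differentiating $1+v$ and applying the classical Gr\"onwall lemma gives $1 + v(t) \leq (1+u(0))e^{Ct}$, whence $u(t) \leq v(t) \leq (u(0)+1)e^{Ct}$, as claimed.

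There is no genuine obstacle in the argument. The only mild subtlety lies in the regime $\gamma>1$, where the right-hand side of the displayed bound is only meaningful for $t$ strictly less than the blow-up time $t_* = (u(0)+CT)^{1-\gamma}/((\gamma-1)C)$; this reflects the fact that the comparison ODE $w' = Cw^\gamma$ itself blows up in finite time, and the hypothesis that $u$ is bounded on $[0,T]$ ensures the argument is consistent within its range of validity. In the application to~\cref{stentest} one has $\gamma = 1/(m-2) \leq 1$ since $m\geq 3$, so this caveat does not come into play in practice.
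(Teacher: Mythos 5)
Your proof is correct and follows essentially the same route as the paper's: absorbing the linear drift $Ct$ into the initial value via $Ct\le CT$, introducing the integral auxiliary, and integrating the resulting separable differential inequality (Gr\"onwall for $\gamma=1$). The one point worth noting is that the paper's proof only spells out the case $\gamma<1$, whereas you also carefully treat $\gamma>1$, observing that the sign flips in the integrated inequality and in the monotonicity of $x\mapsto x^{\frac{1}{1-\gamma}}$ cancel so that the same closed-form bound holds (within the blow-up window of the comparison ODE); this is a minor but genuine completion of the argument as stated for all $\gamma\neq 1$.
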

\begin{proof}
    For $\gamma = 1$ we note that we can write \eqref{gamma1} as
    \begin{align}
        u(t) + 1 \leq u(0) + 1 + C \int_0^t u(s) + 1 \dx s
    \end{align}
    and then apply Gronwall's inequality to get the assertion.
    
    If $\gamma < 1$ then we set $X(t) := \int_0^t u^{\gamma}(s) \dx s$ and hence 
    \begin{align}
        \frac{\dx}{\dx t} X(t) = u^{\gamma}(t) \stackrel{\eqref{gamma1}}{\leq} \bra*{u(0) + Ct + C X(t)}^{\gamma}
    \end{align}
    which implies
    \begin{align}\label{diffineq}
        \frac{\dx}{\dx t} \bra*{u(0) + CT + CX(t)} \leq C \bra*{u(0) + CT + C X(t)}^{\gamma}.
    \end{align}
    The differential inequality \eqref{diffineq} further yields
    \begin{align}
        \frac{\dx}{\dx t} \bra*{u(0) + CT + CX(t)}^{1 - \gamma} \leq C_{\gamma}
    \end{align}
    for $C_{\gamma} := (1-\gamma)C$ and since $X(0) = 0$ we have by integrating that
    \begin{align}
        \bra*{u(0) + CT + CX(t)}^{1 - \gamma} \leq \bra*{u(0) + CT}^{1 - \gamma} + C_{\gamma} t.
    \end{align}
    By taking the inverse and appealing again to the assumption \eqref{gamma1} we get the desired estimate.
\end{proof}
\bibliographystyle{abbrv}
\bibliography{refs}

\begin{thebibliography}{10}

\bibitem{BB00}
J.-D. Benamou and Y.~Brenier.
\newblock A computational fluid mechanics solution to the {M}onge-{K}antorovich
  mass transfer problem.
\newblock {\em Numer. Math.}, 84(3):375--393, 2000.

\bibitem{Beretta}
E.~Beretta, M.~Bertsch, and R.~Dal~Passo.
\newblock Nonnegative solutions of a fourth-order nonlinear degenerate
  parabolic equation.
\newblock {\em Arch. Rational Mech. Anal.}, 129(2):175--200, 1995.

\bibitem{B96}
F.~Bernis.
\newblock Finite speed of propagation and continuity of the interface for thin
  viscous flows.
\newblock {\em Adv. Differential Equations}, 1(3):337--368, 1996.

\bibitem{BF90}
F.~Bernis and A.~Friedman.
\newblock Higher order nonlinear degenerate parabolic equations.
\newblock {\em J. Differential Equations}, 83(1):179--206, 1990.

\bibitem{BP96}
A.~L. Bertozzi and M.~Pugh.
\newblock The lubrication approximation for thin viscous films: regularity and
  long-time behavior of weak solutions.
\newblock {\em Comm. Pure Appl. Math.}, 49(2):85--123, 1996.

\bibitem{BDPGG98}
M.~Bertsch, R.~Dal~Passo, H.~Garcke, and G.~Gr\"{u}n.
\newblock The thin viscous flow equation in higher space dimensions.
\newblock {\em Adv. Differential Equations}, 3(3):417--440, 1998.

\bibitem{Cornalba2018}
F.~Cornalba.
\newblock A priori positivity of solutions to a non-conservative stochastic
  thin-film equation.
\newblock {\em arXiv preprint arXiv:1811.07826}, 2018.

\bibitem{DPGG98}
R.~Dal~Passo, H.~Garcke, and G.~Gr\"{u}n.
\newblock On a fourth-order degenerate parabolic equation: global entropy
  estimates, existence, and qualitative behavior of solutions.
\newblock {\em SIAM J. Math. Anal.}, 29(2):321--342, 1998.

\bibitem{DPGG01}
R.~Dal~Passo, L.~Giacomelli, and G.~Gr\"{u}n.
\newblock A waiting time phenomenon for thin film equations.
\newblock {\em Ann. Scuola Norm. Sup. Pisa Cl. Sci. (4)}, 30(2):437--463, 2001.

\bibitem{DGGG20}
K.~{Dareiotis}, B.~{Gess}, M.~V. {Gnann}, and G.~{Gr{\"u}n}.
\newblock {Non-negative Martingale Solutions to the Stochastic Thin-Film
  Equation with Nonlinear Gradient Noise}.
\newblock {\em arXiv e-prints}, page arXiv:2012.04356, Dec. 2020.

\bibitem{Davidovitch}
B.~Davidovitch, E.~Moro, and H.~A. Stone.
\newblock Spreading of viscous fluid drops on a solid substrate assisted by
  thermal fluctuations.
\newblock {\em Physical review letters}, 95(24):244505, 2005.

\bibitem{DG87}
D.~A. Dawson and J.~G\"{a}rtner.
\newblock Large deviations from the {M}c{K}ean-{V}lasov limit for weakly
  interacting diffusions.
\newblock {\em Stochastics}, 20(4):247--308, 1987.

\bibitem{DG00}
J.-D. Deuschel and G.~Giacomin.
\newblock Entropic repulsion for massless fields.
\newblock {\em Stochastic Process. Appl.}, 89(2):333--354, 2000.

\bibitem{Pavliotis}
M.~A. Dur\'{a}n-Olivencia, R.~S. Gvalani, S.~Kalliadasis, and G.~A. Pavliotis.
\newblock Instability, rupture and fluctuations in thin liquid films: theory
  and computations.
\newblock {\em J. Stat. Phys.}, 174(3):579--604, 2019.

\bibitem{F13}
J.~Fischer.
\newblock Optimal lower bounds on asymptotic support propagation rates for the
  thin-film equation.
\newblock {\em J. Differential Equations}, 255(10):3127--3149, 2013.

\bibitem{F14}
J.~Fischer.
\newblock Upper bounds on waiting times for the thin-film equation: the case of
  weak slippage.
\newblock {\em Arch. Ration. Mech. Anal.}, 211(3):771--818, 2014.

\bibitem{FG18}
J.~Fischer and G.~Gr\"{u}n.
\newblock Existence of positive solutions to stochastic thin-film equations.
\newblock {\em SIAM J. Math. Anal.}, 50(1):411--455, 2018.

\bibitem{FW84}
M.~I. Freidlin and A.~D. Wentzell.
\newblock {\em Random perturbations of dynamical systems}, volume 260 of {\em
  Grundlehren der Mathematischen Wissenschaften [Fundamental Principles of
  Mathematical Sciences]}.
\newblock Springer-Verlag, New York, 1984.
\newblock Translated from the Russian by Joseph Sz\"{u}cs.

\bibitem{FH14}
P.~K. Friz and M.~Hairer.
\newblock {\em A course on rough paths}.
\newblock Universitext. Springer, Cham, 2014.
\newblock With an introduction to regularity structures.

\bibitem{GG20}
B.~Gess and M.~V. Gnann.
\newblock The stochastic thin-film equation: existence of nonnegative
  martingale solutions.
\newblock {\em Stochastic Process. Appl.}, 130(12):7260--7302, 2020.

\bibitem{GO03}
L.~Giacomelli and F.~Otto.
\newblock Rigorous lubrication approximation.
\newblock {\em Interfaces Free Bound.}, 5(4):483--529, 2003.

\bibitem{G95}
G.~Gr\"{u}n.
\newblock Degenerate parabolic differential equations of fourth order and a
  plasticity model with non-local hardening.
\newblock {\em Z. Anal. Anwendungen}, 14(3):541--574, 1995.

\bibitem{Gruen}
G.~Gr\"{u}n, K.~Mecke, and M.~Rauscher.
\newblock Thin-film flow influenced by thermal noise.
\newblock {\em J. Stat. Phys.}, 122(6):1261--1291, 2006.

\bibitem{GruenRumpf}
G.~Gr\"{u}n and M.~Rumpf.
\newblock Nonnegativity preserving convergent schemes for the thin film
  equation.
\newblock {\em Numer. Math.}, 87(1):113--152, 2000.

\bibitem{HairerMaasAoP2012}
M.~Hairer and J.~Maas.
\newblock A spatial version of the {I}t\^{o}-{S}tratonovich correction.
\newblock {\em Ann. Probab.}, 40(4):1675--1714, 2012.

\bibitem{HMW14}
M.~Hairer, J.~Maas, and H.~Weber.
\newblock Approximating rough stochastic {PDE}s.
\newblock {\em Comm. Pure Appl. Math.}, 67(5):776--870, 2014.

\bibitem{HW15}
M.~Hairer and H.~Weber.
\newblock Large deviations for white-noise driven, nonlinear stochastic {PDE}s
  in two and three dimensions.
\newblock {\em Ann. Fac. Sci. Toulouse Math. (6)}, 24(1):55--92, 2015.

\bibitem{IW89}
N.~Ikeda and S.~Watanabe.
\newblock {\em Stochastic differential equations and diffusion processes},
  volume~24 of {\em North-Holland Mathematical Library}.
\newblock North-Holland Publishing Co., Amsterdam; Kodansha, Ltd., Tokyo,
  second edition, 1989.

\bibitem{KS88}
I.~Karatzas and S.~E. Shreve.
\newblock {\em Brownian motion and stochastic calculus}, volume 113 of {\em
  Graduate Texts in Mathematics}.
\newblock Springer-Verlag, New York, 1988.

\bibitem{KM15}
H.~Kn\"{u}pfer and N.~Masmoudi.
\newblock Darcy's flow with prescribed contact angle: well-posedness and
  lubrication approximation.
\newblock {\em Arch. Ration. Mech. Anal.}, 218(2):589--646, 2015.

\bibitem{KORVE07}
R.~Kohn, F.~Otto, M.~G. Reznikoff, and E.~Vanden-Eijnden.
\newblock Action minimization and sharp-interface limits for the stochastic
  {A}llen-{C}ahn equation.
\newblock {\em Comm. Pure Appl. Math.}, 60(3):393--438, 2007.

\bibitem{LLP80}
E.~Lifshitz and L.~P.~P. (Auth.).
\newblock {\em Statistical Physics. Theory of the Condensed State}.
\newblock 1980.

\bibitem{MG21}
S.~Metzger and G.~Grün.
\newblock Existence of nonnegative solutions to stochastic thin-film equations
  in two space dimensions, 2021.

\bibitem{MPR14}
A.~Mielke, M.~A. Peletier, and D.~R.~M. Renger.
\newblock On the relation between gradient flows and the large-deviation
  principle, with applications to {M}arkov chains and diffusion.
\newblock {\em Potential Anal.}, 41(4):1293--1327, 2014.

\bibitem{BO03}
B.~{{\O}}ksendal.
\newblock {\em Stochastic differential equations}.
\newblock Universitext. Springer-Verlag, Berlin, sixth edition, 2003.
\newblock An introduction with applications.

\bibitem{O98}
F.~Otto.
\newblock Lubrication approximation with prescribed nonzero contact angle.
\newblock {\em Comm. Partial Differential Equations}, 23(11-12):2077--2164,
  1998.

\bibitem{geometryFO01}
F.~Otto.
\newblock The geometry of dissipative evolution equations: the porous medium
  equation.
\newblock {\em Comm. Partial Differential Equations}, 26(1-2):101--174, 2001.

\bibitem{OW19}
F.~Otto and H.~Weber.
\newblock Quasilinear {SPDE}s via rough paths.
\newblock {\em Arch. Ration. Mech. Anal.}, 232(2):873--950, 2019.

\bibitem{bookPavliotis}
G.~A. Pavliotis.
\newblock {\em Stochastic processes and applications}, volume~60 of {\em Texts
  in Applied Mathematics}.
\newblock Springer, New York, 2014.
\newblock Diffusion processes, the Fokker-Planck and Langevin equations.

\bibitem{yorrevuz}
D.~Revuz and M.~Yor.
\newblock {\em Continuous martingales and {B}rownian motion}, volume 293 of
  {\em Grundlehren der Mathematischen Wissenschaften [Fundamental Principles of
  Mathematical Sciences]}.
\newblock Springer-Verlag, Berlin, third edition, 1999.

\bibitem{S21}
M.~Sauerbrey.
\newblock Martingale solutions to the stochastic thin-film equation in two
  dimensions, 2021.

\bibitem{TN04}
K.~Twardowska and A.~Nowak.
\newblock On the relation between the {I}t\^{o} and {S}tratonovich integrals in
  {H}ilbert spaces.
\newblock {\em Ann. Math. Sil.}, (18):49--63, 2004.

\bibitem{CV03}
C.~Villani.
\newblock {\em Topics in optimal transportation}, volume~58 of {\em Graduate
  Studies in Mathematics}.
\newblock American Mathematical Society, Providence, RI, 2003.

\bibitem{Z01}
L.~Zambotti.
\newblock A reflected stochastic heat equation as symmetric dynamics with
  respect to the 3-d {B}essel bridge.
\newblock {\em J. Funct. Anal.}, 180(1):195--209, 2001.

\bibitem{Z08}
L.~Zambotti.
\newblock A conservative evolution of the {B}rownian excursion.
\newblock {\em Electron. J. Probab.}, 13:no. 37, 1096--1119, 2008.

\bibitem{Bertozzi}
L.~Zhornitskaya and A.~L. Bertozzi.
\newblock Positivity-preserving numerical schemes for lubrication-type
  equations.
\newblock {\em SIAM J. Numer. Anal.}, 37(2):523--555, 2000.

\end{thebibliography}
\end{document}